\DeclareMathAlphabet{\mathpzc}{OT1}{pzc}{m}{it}
\colorlet{darkblue}{blue!50!black}
\newtheorem{theorem}{Theorem}[section]
\newtheorem{lemma}[theorem]{Lemma}
\newtheorem{corollary}[theorem]{Corollary}
\newtheorem{definition}[theorem]{Definition}
\newtheorem{example}[theorem]{Example}
\newtheorem{remark}[theorem]{Remark}
\let\originalleft\left
\let\originalright\right
\renewcommand{\left}{\mathopen{}\mathclose\bgroup\originalleft}
\renewcommand{\right}{\aftergroup\egroup\originalright}
\newcommand{\Tr}{\mathop{\mathrm{Tr}}}
\renewcommand{\d}{\/\mathrm{d}\/}
\def\w{\textbf{W}^{\varepsilon}_{{\theta}^{\varepsilon}}}
\def\e{\varepsilon}
\def\S{\mathrm{S}}
\def\L{\mathbb{L}}
\def\A{\mathrm{A}}
\def\I{\mathrm{I}}
\def\F{\mathrm{F}}
\def\C{\mathrm{C}}
\def\f{\boldsymbol{f}}
\def\B{\mathrm{B}}
\def\D{\mathrm{D}}
\def\y{\boldsymbol{y}}
\def\Y{\mathrm{Y}}
\def\Z{\boldsymbol{Z}}
\def\E{\mathbb{E}}
\def\X{\boldsymbol{X}}
\def\Y{\boldsymbol{Y}}
\def\x{\boldsymbol{x}}
\def\g{\boldsymbol{g}}
\def\h{\boldsymbol{h}}
\def\z{\boldsymbol{z}}
\def\v{\boldsymbol{v}}
\def\V{\mathbb{v}}
\def\w{\boldsymbol{w}}
\def\W{\mathrm{W}}
\def\G{\mathrm{G}}
\def\Q{\mathrm{Q}}
\def\N{\mathbb{N}}
\def\no{\nonumber}
\def\V{\mathbb{V}}
\def\wi{\widetilde}
\def\Q{\mathrm{Q}}
\def\u{\mathrm{U}}
\def\P{\mathrm{P}}
\def\u{\boldsymbol{u}}
\def\H{\mathbb{H}}
\newcommand{\eps}{\varepsilon}
\newcommand{\R}{\mathbb{R}}
\renewcommand{\d}{\/\mathrm{d}\/}
\newcommand{\Addresses}{{
		\footnote{
			
			\noindent \textsuperscript{1,2}Department of Mathematics, Indian Institute of Technology Roorkee-IIT Roorkee,
			Haridwar Highway, Roorkee, Uttarakhand 247667, INDIA.\par\nopagebreak
			\noindent  \textit{e-mail:} \texttt{Manil T. Mohan: maniltmohan@ma.iitr.ac.in, maniltmohan@gmail.com.}
			
			\textit{e-mail:} \texttt{Sagar Gautam: sagar\_g@ma.iitr.ac.in.}
			
			\noindent \textsuperscript{*}Corresponding author.
			
			\textit{Key words:}  Convective Brinkman-Forchheimer equations,  invariant measure, Kolmogorov equation, carr\'e du champs identity, infinite horizon  problems, optimal stopping time  problem. 
			
			Mathematics Subject Classification (2020): Primary 93B05, 35R15; Secondary 60H30, 37L40, 76D03.
			
}}}
\begin{document}
	
	
	\title[Kolmogorov equations for 2D SCBF equations]{Kolmogorov equations for 2D stochastic convective Brinkman-Forchheimer equations: Analysis and Applications
			\Addresses}
		\author[S. Gautam and M. T. Mohan]
		{Sagar Gautam\textsuperscript{1} and Manil T. Mohan\textsuperscript{2*}}

	\maketitle
	\begin{abstract}
		In this work, we consider the following 2D stochastic convective Brinkman-Forchheimer (SCBF) equations  in a bounded smooth domain $\mathcal{O}$: 
		\begin{align*}
			\mathrm{d}\boldsymbol{u}+\left[-\mu \Delta\boldsymbol{u}+(\boldsymbol{u}\cdot\nabla)\boldsymbol{u}+\alpha\boldsymbol{u}+\beta|\boldsymbol{u}|^{r-1}\boldsymbol{u}+\nabla p\right]\mathrm{d}t=\sqrt{\mathrm{Q}}\mathrm{W}, \ \nabla\cdot\boldsymbol{u}=0, 
		\end{align*}
		where $\mu,\alpha,\beta>0$, $r\in\{1,2,3\}$, $\mathrm{Q}$ is a non-negative operator of trace class, $\mathrm{W}$ is a cylindrical Wiener process in a Hilbert space $\mathbb{H}$.  Under the following  assumption on  the viscosity co-efficient $\mu$  and the Darcy co-efficient $\alpha$: 	for some positive constant $\gamma_1$, 
		\begin{equation*} 
			\mu(\mu+\alpha)^2>\gamma_1\max\{4\Tr(\Q),\Tr(\A^{2\delta}\Q)\},
			\end{equation*}
	where $\A$ is the Stokes operator and $\delta\in(0,\frac{1}{2})$, our primary goal is to solve the corresponding Kolmogorov equation in the space	$\mathbb{L}^2(\H;\eta),$ where $\eta$ is the unique invariant measure associated with 2D SCBF equations. Then, we  establish the well-known ``carr\'e du champs'' identity.  Some sharp estimates on the derivatives of the solution constitute the key component of the proofs.  We take into consideration two control problems from the application point of view. The first is an infinite horizon control problem for which we establish the existence of a solution for the Hamilton-Jacobi-Bellman equation associated with it. Finally, by exploiting  $m$-accretive theory, we demonstrate the existence of a unique solution for an obstacle problem associated with the Kolmogorov operator corresponding to the stopping-time problem for 2D SCBF equations.
	\end{abstract}

	\section{Introduction}\label{sec1}\setcounter{equation}{0}
	This article studies the Kolmogorov equation associated with the two-dimensional stochastic convective Brinkman-Forchheimer (SCBF) and their control theoretic applications. 
 	\subsection{The model}
	The convective Brinkman-Forchheimer (CBF) equations describe the motion of incompressible fluid flows in a saturated porous medium. These equations are applicable when the fluid flow rate is sufficiently high and the porosity is not too small. Let us first provide a mathematical formulation of CBF equations. Let $\mathcal{O}\subset\R^2$ be a bounded domain with a smooth boundary $\partial\mathcal{O}$. We denote the velocity field by $\Y(t,\xi)\in\R^2$ at time $t\in[0,T]$, $T<\infty$, and position $\xi\in\mathcal{O}$ and the pressure field by $p(t,\xi)\in\R$. The SCBF  equations driven by an additive Gaussian noise  are given by
   \begin{equation}\label{1}
   \left\{
	\begin{aligned}
	\frac{\partial \Y(t,\xi)}{\partial t}-\mu \Delta\Y(t,\xi)+(\Y(t,\xi)\cdot\nabla)&\Y(t,\xi)+\alpha\Y(t,\xi)+\beta|\Y(t,\xi)|^{r-1}\Y(t,\xi)\\+\nabla p(t,\xi)&=\sqrt{\Q}\d\W(t), \ \text{ in } \ (0,T)\times\mathcal{O}, \\ \nabla\cdot\Y(t,\xi)&=0, \ \text{ in } \ [0,T]\times\mathcal{O}, \\
	\Y(t,\xi)&=\boldsymbol{0},\ \text{ on } \ [0,T]\times\partial\mathcal{O}, \\
	\Y(0,\xi)&=\y(\xi) \ \text{ in } \ \mathcal{O},
	\end{aligned}
	\right.
	\end{equation}
where $\{\W(t)\}_{t\geq0}$ is an $\H$-valued cylindrical Wiener process defined on a filtered probability space  $(\Omega,\mathscr{F},\{\mathscr{F}_t\}_{t\geq0},\mathbb{P})$  and $\Q$ is a trace class operator. For the uniqueness of the pressure, one can impose the condition $
\int_{\mathcal{O}}p(t,\xi)\d\xi=0,  \text{ in }  [0,T].$ The constant $\mu>0$ denotes the \emph{Brinkman coefficient} (effective viscosity) and the constants $\alpha$ and $\beta$ are due to Darcy-Forchheimer law which are termed as \emph{Darcy} (permeability of the porous medium) and \emph{Forchheimer} (proportional to the porosity of the material) coefficients, respectively.For $\alpha=\beta=0$, one obtains the classical 2D stochastic Navier-Stoke equations (SNSE).   The parameter $r\in[1,\infty)$ is known as the \emph{absorption exponent}  and the case  $r=3$ is referred as the critical exponent (\cite{KWH}). 
For $\y\in\H$,   by exploiting a monotonicity property of the linear and nonlinear operators as well as a stochastic generalization of the Minty-Browder technique, the author in \cite{kkmtm,MTM8} established the existence and uniqueness of a global strong solution $$\Y\in\C([0,T];\H)\cap\mathrm{L}^2(0,T;\V)\cap\mathrm{L}^{r+1}(0,T;\wi\L^{r+1}),\ \mathbb{P}\text{-a.s.},$$  satisfying the energy equality (It\^o's formula) for SCBF equations (in bounded and torus) driven by multiplicative Gaussian noise. 

 \subsubsection{Kolmogorov equations} \setcounter{equation}{0}
 It is well-known that the solution of an It\^o equation in finite dimensions is a diffusion process and the expectation of a smooth function of such a solution satisfies a diffusion equation in $\R^d$, known as \emph{Kolmogorov equations} (see \cite[Chapters 7,8]{oks}). Therefore, exploring such a relationship for stochastic partial differential equations (SPDE) is natural. A. N. Kolmogorov examined these equations for the first time in 1931 when he examined transition probabilities that satisfy \emph{Chapman-Kolmogorov equations} (see \cite{ANK1}) in the context of the Markov processes. These are commonly called the \emph{Kolmogorov backward (or forward) equations} in the literature. 
  In this article, we shall mainly concentrate on infinite-dimensional generalization of the (backward) Kolmogorov equations associated with SCBF equations driven an  additive Gaussian noise. Furthermore, our work focuses on the elliptic counterpart of Kolmogorov equations.
 The infinite-dimensional Kolmogorov equations have been intensively studied in the literature for the past three decades (cf.  \cite{SC1,gdp7,mr13,mr14,jz13}). 
 Regularity analysis for  Kolmogorov equations is one of the very important problems. The authors in \cite{MH1} recently discussed the loss of regularity phenomena for Kolmogorov PDEs in finite dimensions. 
Whereas, the authors in \cite{Aand} discussed regularity properties for solutions to infinite-dimensional Kolmogorov equations. They demonstrated that if the nonlinear drift and nonlinear diffusion coefficients and the initial function of the corresponding Kolmogorov equations are $n$-times continuously Fr\'echet differentiable, then so does their generalized solution at every positive time (see also \cite{ceb}).
The authors in \cite{VBGA} studied the essential $m$-dissipativity for 2D SNSE driven by an additive Gaussian  noise in the periodic domain. They later expanded this conclusion to a channel with periodic boundary conditions by incorporating space-time white noise (see \cite{VbGdp}). Moreover, for 2D stochastic Navier-Stokes-Coriolis equations in the periodic domain, the author in \cite{WSt} demonstrated the same conclusion in $\mathrm{L}^p$-spaces as opposed to Hilbert space.

\subsubsection{Applications} 
An important application of Kolmogorov equation revolves around the analysis of a family of nonlinear elliptic and parabolic equations on Hilbert spaces. They arise in relation to the control of infinite-dimensional stochastic systems. One of the crucial problems in this direction is finding solutions to an infinite-dimensional (second order) \emph{Hamilton-Jacobi-Bellman} (HJB) equation for the value function associated with a stochastic optimal control problem of distributed parameter systems. Usually, the HJB equations are solved by using transition semigroup and  fixed point arguments (cf. \cite{gdp7}).
This methodology has already been implemented in the case of HJB equations with Lipschitz (locally and globally) Hamiltonian; see the works of \cite{pcgd,pcgdp1}. In both of these works, the authors showed the existence and uniqueness of a global solution of class $\C^1$. Proceeding further, in the paper \cite{gozzi2}, the author showed the existence, uniqueness, and $\C^2$-regularity of a local solution for the second-order infinite-dimensional HJB equation with a global Lipschitz Hamiltonian. 
Furthermore, the authors in \cite{gPaD} solved the HJB equations for 2D SNSE  defined in a bounded domain via dynamic programming approach. 
Furthermore, for a stationary HJB equation, the authors in \cite{gozzi3} established the existence and uniqueness of a mild solution via maximal monotone operator theory.

Another interconnected application of Kolmogorov equations is \emph{optimal stopping problems}, which is highly applicable in stochastic analysis, finance, and control theory. It is the problem of finding the optimal stopping time to stop a given (Markov) process so that the cost associated with the process is minimum. 
The problem of characterizing the optimal stopping time can often be reduced to an equivalent problem in PDEs, called \emph{variational inequality}. The application of variational parabolic or elliptic inequalities to stopping time problems was first introduced by the authors in \cite{WHF, TT2}. The application of variational and quasi-varitional inequalities in optimal stopping problems was studied in the work \cite{AvF}. The authors in \cite{VBSSO} discussed the optimal stopping problem for 2D SNSE using infinite-dimensional analysis and examined the existence results for the corresponding infinite-dimensional variational inequality. 
The existence theory for parabolic variational inequalities related to finite- and infinite-dimensional diffusion processes in weighted $\mathrm{L}^2$-spaces is established by the author in \cite{VBCM} with respect to excessive measures associated with a transition semigroup. They relaxed the usual non-degeneracy assumptions on the diffusion coefficient using the theory of maximal monotone operators in Hilbert spaces. 
The finite horizon optimal stopping problem for an infinite-dimensional diffusion, driven by an SDE on a Hilbert space, is considered in the work \cite{MBCTD}. 
An intense treatment and further analysis on optimal stopping problems can be found in \cite{ABJL2}.

\subsection{Difficulties, strategies and novelties}
Let us briefly explain the main challenges  and difficulties encountered in this  work. Consider the following infinite-dimensional Kolmogorov equations (parabolic) associated with the SCBF equations \eqref{32}:
\begin{equation}\label{kolmp}
	\left\{
	\begin{aligned}
		\frac{\partial\y(t,\x)}{\partial t} &=\frac{1}{2}\Tr\left[\Q\D_{\x}^2\y(t,\x)\right]-(\mu\A\x+\alpha\x+\B(\x)+\beta\mathcal{C}(\x),\D_{\x}\y(t,\x)), \\
		\y(0,\x)&=\varphi(\x), \ \ \x\in\H,
	\end{aligned}
	\right.
\end{equation}
where $\varphi(\cdot):\H\to\R$ belongs to some suitable function space, $\y(\cdot,\cdot):[0,T]\times\H\to\R$ is the unknown, $\D_{\x}$ represents the derivative with respect to $\x$ and $\Tr$ stands for the trace. Besides the parabolic equations \eqref{kolmp}, we shall also consider the following elliptic Kolmogorov equations:
\begin{align}\label{kolme}
	\lambda\widetilde{\y}(\x)-\frac{1}{2}\Tr\left[\Q\D_{\x}^2\widetilde{\y}(\x)\right]+(\mu\A\x+\alpha\x+\B(\x)+\beta\mathcal{C}(\x),\D_{\x}\widetilde{\y}(\x))=\mathfrak{f}(\x),
\end{align}
where $\lambda>0$ and $\mathfrak{f}(\cdot):\H\to\R$, are given, and $\widetilde{\y}(\cdot):\H\to\R$ is unknown. It is well-known in the literature that the solution $\X(\cdot,\cdot)=\mathrm{P}_{\H}\Y(\cdot,\cdot)$ of the system \eqref{32} ($\mathrm{P}_{\H}:\L^2(\mathcal{O})\to\H$ is the Helmholtz-Hodge orthogonal projection) and the solutions of Kolmogorov equations \eqref{kolmp}-\eqref{kolme}, are closely related via following formal identities:
\begin{align*}
	\y(t,\x)=\E[\varphi(\X(t,\x))], \ t\geq0, \ \x\in\H,
\end{align*}
and 
\begin{align*}
	\widetilde{\y}(\x)=\int_0^{\infty} e^{-\lambda t} \E[\mathfrak{f}(\X(t,\x))]\d t, \ \x\in\H,
\end{align*}
respectively (cf. \cite{EANK}). However, showing that $\y(t,\x)=\E[\varphi(\X(t,\x))]$ is a solution (the precise sense will be made later) to \eqref{kolmp} is one of the most challenging tasks of this work. The main difficulty is as follows: First (by a direct computation), we have to justify the following derivatives formulae for regular $\varphi$:
\begin{align*}
	(\D_{\x}\y(t,\x),\h)&=\E\left[\big(\D_{\x}\varphi(\X(t,\x)),\D_{\x}\X(t,\x)\h\big)\right],\\
	\D_{\x}^2\y(t,\x)(\h,\h)&=\E\left[\D_{\x}^2\varphi(\X(t,\x))
	\big(\D_{\x}\X(t,\x)\h,\D_{\x}\X(t,\x)\h\big)\right]\nonumber\\&\quad+
	\E\left[\big(\D_{\x}\varphi(\X(t,\x)),\D_{\x}^2\X(t,\x)(\h,\h)\big) \right],
\end{align*}
for $t\geq 0$ and $\x,\h\in\H$. Then, we use It\^o's formula to show that $\y(t,\x)=\E[\varphi(\X(t,\x))]$ is a solution of \eqref{kolmp}. But this is not easy, as the above derivative formulae involve the derivatives 
$\D_{\x}\X(t,\x)$ and $\D_{\x}^2\X(t,\x)$ of the solution of \eqref{32}, which are, in general, we do not know whether they are integrable or not. This is true only when the coefficients $\mathcal{B}(\cdot)$ and $\mathcal{C}(\cdot)$ are regular and $\Q$ is of trace class (see \cite[Theorem 7.5.1, Chapter 7]{gdp7}). 

To overcome this difficulty, we analyze the Kolmogorov equation \eqref{kolme} in the space $\mathrm{L}^2(\H,\eta)$, where $\eta$ is the invariant measure for the transition semigroup $\P_t$. There are plenty of results available in the literature regarding the existence and uniqueness of invariant measures (see \cite{gdp2,AD,MHJC} and references therein). The existence of invariant measures is ensured by energy estimates and the Krylov-Bogoliubov theorem (see \cite{gdp2}).  The uniqueness of the invariant measure requires more effort. The most common approach to establish the uniqueness is to show that the transition semigroup $\P_t$ is \emph{irreducibile} and \emph{strong Feller} (\cite{gdp2}). The authors in \cite{akmtm} used this method to demonstrate the uniqueness of invariant measure for 2D SCBF equations in the space $\D(\A^{\zeta})\subset\H$ for arbitrary $\zeta\in[\frac{1}{4},\frac{1}{2})$ for $r=1,2$ and $\zeta\in[\frac{1}{3},\frac{1}{2})$ for $r=3$. 
	In our work, we are getting the restriction on the viscosity $\mu$ (and Darcy coefficient $\alpha$, cf. \eqref{439}) while proving the uniqueness of the invariant measure in the $\H$-space by using exponential estimates on the first derivative $\boldsymbol{\xi}^{\boldsymbol{h}}(t,\x)=\D_{\x}\X(t,\x)\boldsymbol{h},$  for all $ \x, \boldsymbol{h}\in\H$. 
	
	Note that, unlike the whole space and periodic domain,  the major difficulty in working with bounded domains is that $\mathrm{P}_{\H}(|\Y|^{r-1}\Y)$  need not be zero on the boundary, and $\mathrm{P}_{\H}$ and $-\Delta$ are not necessarily commuting (see \cite{KT2}). Therefore, the equality 
\begin{align}\label{3}
	&\int_{\mathcal{O}}(-\Delta\Y(\xi))\cdot|\Y(\xi)|^{r-1}\Y(\xi)\d \xi \nonumber\\&=\int_{\mathcal{O}}|\nabla\Y(\xi)|^2|\Y(\xi)|^{r-1}\d \xi+ \frac{r-1}{4}\int_{\mathcal{O}}|\Y(\xi)|^{r-3}|\nabla|\Y(\xi)|^2|^2\d \xi,
\end{align}
may not be useful in the context of bounded domains. Therefore, we only restrict ourselves to 2 dimensions with $r\in\{1,2,3\}$. The fact that $(\B(\Y),\A\Y)=0$ in the torus was utilised by the authors in \cite{VBGA} (see \cite[Chapter 6]{gdp1}) to derive the essential $m$-dissipativity of the Kolmogorov operator for 2D SNSE  driven by an additive Gaussian noise. But in our case, even if we work on the torus, most of the calculations will remain the same as the one presented in this paper, since we still need to compute $(\mathcal{C}(\Y),\A\Y)$. However, as we are working on a bounded domain, so we must explicitly compute $(\B(\Y),\A\Y)$ as well as $(\mathcal{C}(\Y),\A\Y)$ due to the presence of the nonlinear damping term $|\Y|^{r-1}\Y$ in \eqref{1}. Even though in the work of \cite{VBGD}, the authors assumed  $\Tr(\A^{\delta}\Q)$ for $\delta>\frac{2}{3}$ to prove essential $m$-dissipativity, let us highlight the significance of the condition $\Tr(\A\Q)<\infty$ in our work. In the proof of the main result (Theorem \ref{thm4.7}), the inequality \eqref{traq} holds true only when  $\int_{\H}\|\A\x\|_{\H}^2\eta(\d\x)\leq C$ (see Lemma \ref{lem5.2}). 

Motivated from the work \cite{VBGD}, we first derive estimates of exponential moments. We mention here that the  authors in \cite{VBGD} assumed large viscosity relative to the operator norm $\|\Q\|_{\mathscr{L}(\H)}$ of the trace class operator $\Q$ to obtain a unique invariant measure. In their work (see Lemma 2.1-2.3 of \cite{VBGD}), the quantity $\|\Q^{1/2}\x\|_{\H}$ is further calculated as $\|\Q^{1/2}\x\|_{\H}^2\leq\|\Q\|_{\mathcal{L}(\H)}\|\x\|_{\H}^2$. In our case, we compute this expression by using the fact that $\|\Q^{\frac{1}{2}}\|_{\mathcal{L}(\H)}^2\leq\Tr(\Q)$. This approach allows us to derive the restriction on $\mu$ (and also on $\alpha$) in terms of $\Tr(\Q)$ only (see the calculations \eqref{trop}-\eqref{trop1}). Moreover, the presence of linear damping term $\alpha\Y$ in the model \eqref{1} helps us to prevent several restrictions on $\mu$.  Due to this, we simplify several calculations and we modify conditions on $\mu$ (up to a certain extent), which are much simpler as compared to the work of \cite{VBGD} (see Sections \ref{sec3}-\ref{sec4}). 
  
Then taking into account the invariance of $\eta$, one can uniquely extend the transition semigroup $\P_t$ to strongly continuous semigroup of contractions on $\mathrm{L}^2(\H,\eta)$ (see \eqref{extd}). We denote by $\mathcal{N}_2$, its infinitesimal generator. The primary problem is to relate the concrete differential operator or Kolmogorov operator, $\mathcal{N}_0$ (see \eqref{4p5} for description) with the abstract operator, $\mathcal{N}_2$. One of the main results of this article is to demonstrate that the Kolmogorov operator $\mathcal{N}_0$ has a closure in $\mathrm{L}^2(\H,\eta)$, which is $\mathcal{N}_2$. Then, we say that $\mathcal{N}_0$ is essentially $m$-dissipative, and this guarantees the existence and uniqueness of a strong solution (in the sense of Friedrichs) to Kolmogorov equations \eqref{kolme} (and \eqref{kolmp} as well) (that is, the limit of strict solution; see \cite[Chapter 7]{gdp7} for the notion of strict solution). Once we get essential $m$-dissipativity, the most crucial property of $\mathcal{N}_2$, which is the \emph{identit\'e du carr\'e du champ} (or integration by parts formula)  follows easily (see \cite{gdp1,gdp7}). 

The integration by parts formula gives the existence of $\Q^{\frac{1}{2}}\D_{\x}\varphi$ for any $\varphi\in\D(\mathcal{N}_2)$ and we derive the following estimate:
\begin{align*}
 \|\Q^{\frac{1}{2}}\D_{\x}(\lambda\I-\mathcal{N}_2)^{-1}\varphi\|_{\mathbb{L}^2(\H,\eta;\H)}\leq \sqrt{\frac{2}{\lambda}}\|\f\|_{\mathbb{L}^2(\H,\eta)},
\end{align*}
for any $\f\in\mathrm{L}^2(\H,\eta)$ (see Section \ref{disscore}).
It gives a well-defined meaning of \emph{perturbed Kolmogorov equations}, which is precisely the Kolmogorov equations with a perturbation term of the form $(\mathfrak{F}(\x),\Q^{\frac{1}{2}}\D_{\x}\varphi(\x))$ for any $\mathfrak{F}\in\mathscr{B}_b(\H)$ and $\x\in\H$. 

These perturbed Kolmogorov equations naturally arise in the infinite (or finite) horizon stochastic optimal control problems where the value function satisfies  \emph{nonlinear staionary (or non-stationary) HJB equations} with a Hamiltonian of the form $g(\|\Q^{\frac{1}{2}}\D_{\x}\varphi(\cdot)\|_{\H})$ for some $g\in\mathscr{B}_b(\H)$ (see \eqref{55} in Section \ref{sec5}).
 Note that the linear part of the nonlinear HJB equation is the Kolmogorov equation associated with the SCBF equations \eqref{51} whose solution is given by the transition semigroup $\P_t$. While the nonlinear part consists of $g(\|\Q^{\frac{1}{2}}\D_{\x}\varphi(\cdot)\|_{\H})$ (see \eqref{55}).
We provide brief details  for the existence of optimal controls by taking a global Lipschitz Hamiltonian (see Subsection \ref{existopt}). Moreover, it turns out that the optimal feedback control, say $\mathrm{U}^*(\cdot)$, can be explicitly given in terms of $\Q^{\frac{1}{2}}\D_{\x}\varphi(\cdot)$ (cf. \cite{gozzi, MoSS2}).  

 Another essential part of this work is analyzing the optimal stopping problem associated with an infinite-dimensional Kolmogorov equation using variational techniques. Unlike the optimal control problem, the value function of the optimal stopping problem satisfies some variational inequality, which turns out to be an obstacle problem (see \eqref{4.3}). Following the work of \cite{VBSSO}, we study the obstacle problem associated with the infinite-dimensional Kolmogorov operator of optimal stopping problem associated with the SCBF equations \eqref{51}. Infinite-dimensional obstacle problems are less explored in the literature. Formally, we show (see Subsection \ref{derivation}) how variational inequality (parabolic) can be obtained by using the value function of the optimal stopping problem. Moreover, the variational inequality of the optimal stopping problem can be viewed as a nonlinear Cauchy problem perturbed with the maximal monotone operator, $N_{K}$, the normal cone onto a closed convex subset $K$. Since the Kolmogorov operator is $m$-dissipative, we can obtain the solvability of our obstacle (or parabolic variational inequality) by applying the standard results for nonlinear Cauchy problems of accretive type (see \cite{VB1,VB2}), and we solve our optimal stopping problem associated with the SCBF equations \eqref{51}.

\subsection{Organization of the paper}
The remaining sections are arranged as follows: In the upcoming section, we provide the necessary function spaces needed for the analysis of Kolmogorov equations associated with SCBF equations and we give definitions of linear, bilinear, and nonlinear operators. Next, we discuss stochastic setup of CBF system in Section \ref{sec3}, where we also provide the relevant details about the noise, trace-class operator, abstract formulation (see \eqref{32}) and the probabilistic notion of a strong solution (see the Definition \ref{def3.1}). Section \ref{sec4} is devoted to the existence and uniqueness of invariant measure for the transition semigroup $\mathrm{P}_t$ associated with the system \eqref{32}. The existence of an invariant measure $\eta$ follows by using the classical argument of Krylov-Bogoliubov theorem (see Subsection \ref{sec4.2}). Moreover, we define the transition semigroup in $\mathrm{L}^2(\H;\eta)$ and introduce the Kolmogorov differential operator $\mathcal{N}_0$ (see \eqref{4p5}). We then establish various exponential moments of the invariant measure (see Lemma \ref{lem4.1}) and provide estimates for bounds on the derivative of the solution to the SCBF equations \eqref{32} (see Lemma \ref{lem4.4}), under the assumption  that the viscosity $\mu$ and Darcy coefficient $\alpha$ are sufficiently large (see \eqref{419}). These results ensure the uniqueness of invariant measure $\eta$ (see Theorem \ref{uniqinv}). In section \ref{sec5}, we examine the approximated system \eqref{4.33} of the SCBF equations \eqref{32} and derive some auxiliary estimates concerning the bound of interpolation estimate  $\int_{\H}\|\A^{\delta}\x\|_{\H}^{2m-2}\|\A^{\delta+\frac{1}{2}}\x\|_{\H}^2\eta(\d\x),$ for any $0<\delta<\frac{1}{2}$ and for any $m\in\N$ (see Lemma \ref{lem4.5}). In Section \ref{disscore}, we present one of the central contributions of this article, the essential $m$-dissipativity of the Kolmogorov operator $\mathcal{N}_0$ (see Theorem \ref{thm4.7}). Additionally, we outline some of its significant implications, such as the integration by parts formula and the perturbation of the Kolmogorov operator (see Lemma \ref{carredu}-\ref{pertubkol}). In the last two sections, we discuss  applications of Kolmogorov equations in control theory, namely infinite horizon problems and optimal stopping time problems. In Section \ref{sec5}, by using a fixed point argument, we solve the stationary HJB equations \eqref{55} of the optimal control problem associated with \eqref{51}. Whereas in Section \ref{sec6}, we study the optimal stopping problem associated with the SCBF equations \eqref{4p2}, and we deduce that it is equivalent to solving the variational inequality (free boundary value problem) (see \eqref{4.3}) satisfied by the value function of the optimal stopping problem (see Theorem \ref{main-thm}). 

\section{Mathematical Formulation} \label{sec2}\setcounter{equation}{0}
This section provides the necessary function spaces and operators needed to obtain the global solvability results of the system \eqref{1}. 

\subsection{Function spaces} Let $\C_\mathrm{cpt}^{\infty}(\mathcal{O};\R^2)$ denotes the space of all infinitely differentiable functions  ($\R^2$-valued) with compact support in $\mathcal{O}\subset\R^2$. We define 
\begin{align*} 
\mathcal{V}&:=\{\x\in\C_\mathrm{cpt}^{\infty}(\mathcal{O},\R^2):\nabla\cdot\x=0\},\\
\mathbb{H}&:=\text{the closure of }\ \mathcal{V} \ \text{ in the Lebesgue space } \L^2(\mathcal{O})=\mathrm{L}^2(\mathcal{O};\R^2),\\
\mathbb{V}&:=\text{the closure of }\ \mathcal{V} \ \text{ in the Sobolev space } \H_0^1(\mathcal{O})=\mathrm{H}_0^1(\mathcal{O};\R^2),\\
\widetilde{\L}^{p}&:=\text{the closure of }\ \mathcal{V} \ \text{ in the Lebesgue space } \L^p(\mathcal{O})=\mathrm{L}^p(\mathcal{O};\R^2),
\end{align*}
for $p\in(2,\infty)$. Then under some smoothness assumptions on the boundary, we characterize the spaces $\H$, $\V$ and $\widetilde{\L}^p$ as 
$
\H=\{\x\in\L^2(\mathcal{O}):\nabla\cdot\x=0,\x\cdot\boldsymbol{n}\big|_{\partial\mathcal{O}}=0\}$,  with norm  $\|\x\|_{\H}^2:=\int_{\mathcal{O}}|\x(\boldsymbol{\xi})|^2\d\boldsymbol{\xi},
$
where $\boldsymbol{n}$ is the outward normal to $\partial\mathcal{O}$,
$
\V=\{\x\in\H_0^1(\mathcal{O}):\nabla\cdot\x=0\},$  with norm $ \|\x\|_{\V}^2:=\int_{\mathcal{O}}|\nabla\x(\boldsymbol{\xi})|^2\d\boldsymbol{\xi},
$ and $\widetilde{\L}^p=\{\x\in\L^p(\mathcal{O}):\nabla\cdot\x=0, \x\cdot\boldsymbol{n}\big|_{\partial\mathcal{O}}\},$ with norm $\|\x\|_{\widetilde{\L}^p}^p=\int_{\mathcal{O}}|\x(\boldsymbol{\xi})|^p\d\boldsymbol{\xi}$, respectively.
Let $(\cdot,\cdot)$ denotes the inner product in the Hilbert space $\H$ and $\langle \cdot,\cdot\rangle $ denotes the induced duality between the spaces $\V$  and its dual $\V'$ as well as $\widetilde{\L}^p$ and its dual $\widetilde{\L}^{p'}$, where $\frac{1}{p}+\frac{1}{p'}=1$. Note that $\H$ can be identified with its dual $\H'$. 

\subsubsection{$\mathrm{L}^2$-Hilbert spaces} 

Let $\eta$ be an invariant measure.
We denote by $\mathrm{L}^2(\H,\eta)$, the equivalence class of all Borel square integrable functions $\varphi:\H\to\R$, endowed with the inner product
\begin{align*}
 (\varphi,\psi)_{\mathrm{L}^2(\H,\eta)}:=\int_{\H} \varphi(\x)\psi(\x)\eta(\d\x), \  \varphi,\psi\in\mathrm{L}^2(\H,\eta),
\end{align*}
and norm 
\begin{align*}
	\|\varphi\|_{\mathrm{L}^2(\H,\eta)}:=\left(\int_{\H} |\varphi(\x)|^2\eta(\d\x)\right)^{\frac{1}{2}}, \ \ \varphi\in\mathrm{L}^2(\H,\eta).
\end{align*}
Let us now consider the space $\L^2(\H,\eta;\H)$, of all equivalence classes of Borel square integrable functions $F:\H\to\H$, such that
\begin{align*}
	\|F\|_{\L^2(\H,\eta;\H)}:=\left(\int_{\H}\|F(\x)\|_{\H}^2\eta(\d\x)\right)^{\frac{1}{2}} <\infty.
\end{align*}
The space $\L^2(\H,\eta;\H)$, endowed with the inner product 
\begin{align*}
	(F,G)_{\L^2(\H,\eta;\H)}:=\int_{\H} (F(\x),G(\x))\eta(\d\x), \ \ \ F,G\in\L^2(\H,\eta;\H),
\end{align*} 
 is a Hilbert space. The elements of $\L^2(\H,\eta;\H)$ are called as $\mathrm{L}^2$-\emph{vector fields} (cf. \cite{gdp7}). For any $F\in\L^2(\H,\eta;\H)$,  we can write $F(\x)=\sum\limits_{k=1}^{\infty}(F(\x),\boldsymbol{e}_k)\boldsymbol{e}_k$, $\eta$-a.e., where $\{\boldsymbol{e}_k\}_{k=1}^{\infty}$ is a complete orthonormal basis in $\H$.

\subsection{Linear operator}
Let $\mathrm{P}_{\H} : \L^2(\mathcal{O}) \to\H$ denotes the \emph{Helmholtz-Hodge orthogonal projection} (see \cite{OAL,AC}). We define
\begin{equation*}
\left\{
\begin{aligned}
\A\u:&=-\mathrm{P}_{\H}\Delta\u,\;\u\in\D(\A),\\ \D(\A):&=\V\cap\H^{2}(\mathcal{O}).
\end{aligned}
\right.
\end{equation*}
It can be easily seen that the operator $\A$ is a non-negative self-adjoint operator in $\H$ with $\V=\D(\A^{1/2})$ and \begin{align}\label{2.7a}\langle \A\u,\u\rangle =\|\u\|_{\V}^2,\ \textrm{ for all }\ \u\in\V, \ \text{ so that }\ \|\A\u\|_{\V'}\leq \|\u\|_{\V}.\end{align}
For a bounded domain $\mathcal{O}$, the operator $\A$ is invertible and its inverse $\A^{-1}$ is bounded, self-adjoint and compact in $\H$. Thus, using spectral theorem, the spectrum of $\A$ consists of an infinite sequence $0< \uplambda_1\leq \uplambda_2\leq\ldots\leq \uplambda_k\leq \ldots,$ with $\uplambda_k\to\infty$ as $k\to\infty$ of eigenvalues. 
Moreover, there exists an orthogonal basis $\{\boldsymbol{e}_k\}_{k=1}^{\infty} $ of $\H$ consisting of eigenvectors of $\A$ such that $\A \boldsymbol{e}_k =\uplambda_k\boldsymbol{e}_k$,  for all $ k\in\mathbb{N}$.  We know that $\u$ can be expressed as $\u=\sum_{k=1}^{\infty}\langle\u,\boldsymbol{e}_k\rangle \boldsymbol{e}_k$ and $\A\u=\sum_{k=1}^{\infty}\uplambda_k\langle\u,\boldsymbol{e}_k\rangle \boldsymbol{e}_k$. Thus, it is immediate that 
\begin{align}\label{poin}
\|\nabla\u\|_{\mathbb{H}}^2=\langle \A\u,\u\rangle =\sum_{k=1}^{\infty}\uplambda_k|\langle \u,\boldsymbol{e}_k\rangle|^2\geq \uplambda_1\sum_{k=1}^{\infty}|\langle\u,\boldsymbol{e}_k\rangle|^2=\uplambda_1\|\u\|_{\mathbb{H}}^2,
\end{align}
which is the Poincar\'e inequality.  In this work, we also need the fractional powers of $\A$.  For $\u\in \H$ and  $\alpha>0,$ we define
$\A^\alpha \u=\sum_{k=1}^\infty \uplambda_k^\alpha (\u,\boldsymbol{e}_k) \boldsymbol{e}_k,  \ \u\in\D(\A^\alpha), $ where $\D(\A^\alpha)=\left\{\u\in \H:\sum_{k=1}^\infty \uplambda_k^{2\alpha}|(\u,\boldsymbol{e}_k)|^2<+\infty\right\}.$ 
Here  $\D(\A^\alpha)$ is equipped with the norm 
\begin{equation} \label{fn}
\|\A^\alpha \u\|_{\H}=\left(\sum_{k=1}^\infty \uplambda_k^{2\alpha}|(\u,\boldsymbol{e}_k)|^2\right)^{1/2}.
\end{equation}
It can be easily seen that $\D(\A^0)=\H,$  $\D(\A^{1/2})=\V$. We set $\V_\alpha= \D(\A^{\alpha/2})$ with $\|\u\|_{\V_{\alpha}} =\|\A^{\alpha/2} \u\|_{\H}.$   Using Rellich-Kondrachov compactness embedding theorem, we know that for any $0\leq s_1<s_2,$ the embedding $\D(\A^{s_2})\subset \D(\A^{s_1})$ is also compact. 

From the characterization \eqref{fn}, it can be easily seen that 
\begin{align*}
	\|\A^{\delta+\frac{1}{2}}\x\|_{\H}^2=\sum_{k=1}^\infty \uplambda_k^{2\left(\delta+\frac{1}{2}\right)}|(\x,\boldsymbol{e}_k)|^2&=\sum_{k=1}^\infty \uplambda_k^{2\delta}\uplambda_k|(\x,\boldsymbol{e}_k)|^2=\sum_{k=1}^\infty \uplambda_k^{2\delta}|(\A^{\frac{1}{2}}\x,\boldsymbol{e}_k)|^2\nonumber\\&\geq\uplambda_1^{2\delta}
	\sum_{k=1}^\infty |(\A^{\frac{1}{2}}\x,\boldsymbol{e}_k)|^2= \uplambda_1^{2\delta}\|\A^{\frac{1}{2}}\x\|_{\H}^2,
\end{align*}
which gives 
\begin{align}\label{fracA}
	\|\A^{\delta+\frac{1}{2}}\x\|_{\H}\geq\uplambda_1^{\delta}\|\x\|_{\V} \ \text{ for all } \ \x\in\D(\A^{\delta+\frac{1}{2}}). 
\end{align}
Similarly, we can calculate
\begin{align}
\|\A^{\delta}\x\|_{\H}&\geq\uplambda_1^{\delta}\|\x\|_{\H} \ \text{ for all } \ \x\in\D(\A^{\delta}), \label{fracA1}
\\
\|\A^{\delta+\frac{1}{2}}\x\|_{\H}&\geq\sqrt{\uplambda_1}\|\A^{\delta}\x\|_{\H} \ \text{ for all } \ \x\in\D(\A^{\delta+\frac{1}{2}}),\label{fracA11}\\
\|\A^{\frac{1}{2}}\x\|_{\H}&\geq {\uplambda_1^{\delta-\frac{1}{2}}}  \|\A^{\delta}\x\|_{\H}
\ \text{ for all } \ \x\in\D(\A^{\frac{1}{2}}),\label{fracA2}
\end{align}
provided $0<\delta<\frac{1}{2}$. 

\subsection{Bilinear operator}
Let us define the \emph{trilinear form} $b(\cdot,\cdot,\cdot):\V\times\V\times\V\to\R$ by $$b(\u,\v,\w)=\int_{\mathcal{O}}(\u(x)\cdot\nabla)\v(x)\cdot\w(x)\d x=\sum_{i,j=1}^n\int_{\mathcal{O}}\u_i(x)\frac{\partial \v_j(x)}{\partial x_i}\w_j(x)\d x.$$ If $\u, \v$ are such that the linear map $b(\u, \v, \cdot) $ is continuous on $\V$, the corresponding element of $\V'$ is denoted by $\B(\u, \v)$. We also denote (with an abuse of notation) $\B(\u) = \B(\u, \u)=\mathrm{P}_{\H}(\u\cdot\nabla)\u$.
An integration by parts gives 
\begin{equation}\label{b0}
\left\{
\begin{aligned}
b(\u,\v,\v) &= 0,\text{ for all }\u,\v \in\V,\\
b(\u,\v,\w) &=  -b(\u,\w,\v),\text{ for all }\u,\v,\w\in \V.
\end{aligned}
\right.
\end{equation}

\subsection{Nonlinear operator}
Let us now consider the operator $\mathcal{C}(\u):=\P_{\H}(|\u|^{r-1}\u)$. It is immediate that $\langle\mathcal{C}(\u),\u\rangle =\|\u\|_{\widetilde{\L}^{r+1}}^{r+1}$ and the map $\mathcal{C}(\cdot):\widetilde{\L}^{r+1}\to\widetilde{\L}^{\frac{r+1}{r}}$ is Gateaux differentiable with Gateaux derivative given by (\cite{MT2})
\begin{align}\label{2133}
	\mathcal{C}'(\y)\z&=\left\{\begin{array}{cl}\mathcal{P}(\z),&\text{ for }r=1,\\ \left\{\begin{array}{cc}\mathcal{P}(|\y|^{r-1}\z)+(r-1)\mathcal{P}\left(\frac{\y}{|\y|^{3-r}}(\y\cdot\z)\right),&\text{ if }\y\neq \mathbf{0},\\\mathbf{0},&\text{ if }\y=\mathbf{0},\end{array}\right.&\text{ for } 1<r<3,\\ \mathcal{P}(|\y|^{r-1}\z)+(r-1)\mathcal{P}(\y|\y|^{r-3}(\y\cdot\z)), &\text{ for }r\geq 3,\end{array}\right.
\end{align}
for all $\z\in\widetilde{\L}^{r+1}$. 

\section{Stochastic convective Brinkman-Forchheimer equations}\label{sec3} \setcounter{equation}{0}

Let $(\Omega,\mathscr{F},\mathbb{P})$ be a complete probability space equipped with an increasing family of sub-sigma fields $\{\mathscr{F}_t\}_{t\geq0}$ of $\mathscr{F}$ satisfying the usual conditions (that is, a normal filtration). The noise term on the  stochastic basis $(\Omega,\mathscr{F},\{\mathscr{F}_t\}_{t\geq0},\mathbb{P})$ is described by  a cylindrical Wiener process $\{\W(t)\}_{t\geq 0}$ on $\H$,  and a covariance operator $\Q$. Let $\mathcal{L}(\H,\H)$ be the space of all bounded linear operators on $\H$. Let the covariance operator  $\mathrm{Q}\in\mathcal{L}(\H,\H)$ be  such that $\mathrm{Q}$ is positive, symmetric and trace class operator with ker $\mathrm{Q}=\{\boldsymbol{0}\}$. We assume that there exists a complete orthonormal system $\{\boldsymbol{e}_k\}_{k\in\mathbb{N}}$ in $\mathbb{H}$ of the covariance operator $\Q$ and a bounded sequence $\{\mu_k\}_{k\in\mathbb{N}}$ of positive real numbers such that $\Q \boldsymbol{e}_k=\mu_k \boldsymbol{e}_k,\ k\in\N$. Here $\mu_k$ is an eigenvalue corresponding to the eigenfunction $\boldsymbol{e}_k$ such that following holds:
\begin{align*}
	\Tr\Q=\sum\limits_{k=1}^{\infty} \mu_k<\infty \ \text{ and }\  \sqrt{\Q}\y=\sum\limits_{k=1}^{\infty}\sqrt{\mu_k}(\y,\boldsymbol{e}_k)\boldsymbol{e}_k,  \ \text{ for }\  \y\in\H.
\end{align*}
We know that the stochastic process $\{\W(t)\}_{t\geq0}$ is an $\H$-valued cylindrical Wiener process with respect to a filtration $\{\mathscr{F}_t\}_{t\geq0}$ if and only if for any $t\geq0,$ the process $\{\W(t)\}_{t\geq 0}$ can be expressed as $\W(t)=\sum\limits_{k=1}^{\infty} \beta_k(t)\boldsymbol{e}_k$, where $\{\beta_k(\cdot), \ k\in\N\}$ is a family of real-valued independent Brownian motions on the probability space $\left(\Omega,\mathscr{F},\{\mathscr{F}_t\}_{t\geq 0},\P\right)$ (cf. \cite{gdp}).

\subsection{Abstract formulation of the stochastic system}\label{sec2.4}
Let us set $\X(t,\x):=\mathrm{P}_{\H}\Y(t,\y)$, $\mathrm{P}_{\H}\y:=\x$ and $\W(t):=\mathrm{P}_{\H}\W(t)$. On projecting the first equation in \eqref{1}, we obtain 
\begin{equation}\label{32}
\left\{
\begin{aligned}
\d\X(t)+[\mu \A\X(t)+\alpha\X(t)+\B(\X(t))+\beta\mathcal{C}(\X(t))]\d t&=\sqrt{\Q}\d\W(t), \ t\in(0,T),\\
\X(0)&=\x,
\end{aligned}
\right.
\end{equation}
where $\x\in\H$, the linear operator $\Q\in\mathcal{L}(\H)$ is non-negative, symmetric and trace class and $\W$ is an $\H$-valued Wiener process defined on a stochastic basis $(\Omega,\mathscr{F},\mathbb{P},\{\mathscr{F}_t\}_{t\geq0})$. 

Let us now provide the definition of a unique global strong solution in the probabilistic sense to the system (\ref{32}).
\begin{definition}[Global strong solution]\label{def3.1}
	Let $\x\in\H$ be given. An $\H$-valued $(\mathscr{F}_t)_{t\geq 0}$-adapted stochastic process $\X(\cdot)$ is called a \emph{strong solution} to the system (\ref{32}) if the following conditions are satisfied: 
	\begin{enumerate}
		\item [(i)] the process $\X\in\mathrm{L}^4(\Omega;\mathrm{L}^{\infty}(0,T;\H)\cap\mathrm{L}^2(0,T;\V))$ and $\X(\cdot)$ has a $\V$-valued  modification, which is progressively measurable with continuous paths in $\H$ and $\X\in\C([0,T];\H)\cap\mathrm{L}^2(0,T;\V)$, $\mathbb{P}$-a.s.,
		\item [(ii)] the following equality holds for every $t\in [0, T ]$, as an element of $\V',$ $\mathbb{P}$-a.s.
		\begin{align}\label{4.4}
		\X(t)&=\X_0-\int_0^t\left[\mu \A\X(s)+\alpha\X(s)+\B(\X(s))+\beta\mathcal{C}(\X(s))\right]\d s+\int_0^t\sqrt{\Q}\d\W(s),
		\end{align}
		\item [(iii)] the following It\^o formula holds true: 	for all $t\in[0,T]$, $\mathbb{P}$-a.s.
	\begin{align}\label{a34}
	&	\|\X(t)\|_{\H}^2+2\mu \int_0^t\|\X(s)\|_{\V}^2\d s+2\alpha\int_0^t\|\X(s)\|_{\H}^2\d s+2\beta\int_0^t\|\X(s)\|_{\widetilde{\L}^{r+1}}^{r+1}\d s\nonumber\\&=\|\x\|_{\H}^2+t\Tr(\Q)+2\int_0^t(\sqrt{\Q}\d\W(s),\X(s)).
	\end{align}

	\end{enumerate}
\end{definition}
An alternative version of condition (\ref{4.4}) is to require that for any  $\v\in\V$:
\begin{align}\label{4.5}
(\X(t),\v)&=(\X_0,\v)-\int_0^t\langle\mu \A\X(s)+\alpha\X(s)+\B(\X(s))+\beta\mathcal{C}(\X(s)),\v\rangle\d s\no\\&\quad+\int_0^t\left(\sqrt{\Q}\d \W(s),\v\right),\ \mathbb{P}\text{-a.s.}
\end{align}	
\begin{definition}
	A strong solution $\X(\cdot)$ to (\ref{32}) is called a
	\emph{pathwise  unique strong solution} if
	$\widetilde{\X}(\cdot)$ is an another strong
	solution, then $$\mathbb{P}\big\{\omega\in\Omega:\X(t)=\widetilde{\X}(t),\ \text{ for all }\ t\in[0,T]\big\}=1.$$ 
\end{definition}

\begin{theorem}[Theorem 3.7, \cite{MTM8}]\label{exis2}
	Let $\x\in\H$ be given and $\mathrm{Tr}(\Q)<\infty$. Then, for $r\in\{1,2,3\}$, there exists a \emph{pathwise unique strong solution}
	$\X(\cdot)$ to the system (\ref{32}) in the sense of Definition \ref{def3.1} such that 
	\begin{align}
		\E\bigg[\sup_{t\in[0,T]}\|\X(t)\|_{\H}^4+\int_0^T\|\X(s)\|_{\H}^2\|\X(s)\|_{\V}^2\d s\bigg]\leq C(\|\x\|_{\H},\mu,\mathrm{Tr}(\Q),T).
	\end{align}
\end{theorem}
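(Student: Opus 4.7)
The plan is a Galerkin approximation along the eigenbasis $\{\boldsymbol{e}_k\}_{k\geq 1}$ of the Stokes operator $\A$: writing $\mathrm{P}_n$ for the orthogonal projection of $\H$ onto $\H_n := \mathrm{span}\{\boldsymbol{e}_1,\ldots,\boldsymbol{e}_n\}$, the truncated SDE
\begin{align*}
\d\X_n + [\mu\A\X_n + \alpha\X_n + \mathrm{P}_n\B(\X_n) + \beta\mathrm{P}_n\mathcal{C}(\X_n)]\d t = \mathrm{P}_n\sqrt{\Q}\d\W, \qquad \X_n(0) = \mathrm{P}_n\x,
\end{align*}
has locally Lipschitz coefficients on $\H_n$ (polynomial in $\X_n$, since $r\leq 3$), so a unique continuous local strong solution exists by standard finite-dimensional SDE theory. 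The a priori bound derived next prevents blow-up and makes the solution global.

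For the uniform estimate I would first apply It\^o's formula to $\|\X_n(t)\|_{\H}^2$, use $\langle\B(\X_n),\X_n\rangle=0$ from \eqref{b0} and $\langle\mathcal{C}(\X_n),\X_n\rangle=\|\X_n\|_{\wi\L^{r+1}}^{r+1}\geq 0$, to obtain the Galerkin analogue of \eqref{a34}. A second It\^o's formula applied to $(\|\X_n\|_{\H}^2)^2$ yields
\begin{align*}
\d\|\X_n\|_{\H}^4 + 4\mu\|\X_n\|_{\H}^2\|\X_n\|_{\V}^2\d t + 4\alpha\|\X_n\|_{\H}^4\d t &\leq \bigl[2\Tr(\Q)+4\Tr(\Q)\bigr]\|\X_n\|_{\H}^2\d t \\
&\quad + 4\|\X_n\|_{\H}^2(\X_n,\sqrt{\Q}\d\W),
\end{align*}
where I dropped the nonnegative Forchheimer contribution and used $\|\sqrt{\Q}\X_n\|_{\H}^2\leq\Tr(\Q)\|\X_n\|_{\H}^2$. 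Young's inequality absorbs the Gaussian term into $\alpha\|\X_n\|_{\H}^4+C$; localising with $\tau_N=\inf\{t:\|\X_n(t)\|_{\H}\geq N\}$, applying Burkholder--Davis--Gundy to the martingale (which produces a piece $\leq\tfrac{1}{2}\E\sup_{s\leq t}\|\X_n\|_{\H}^4 + C\,\Tr(\Q)\E\int_0^t\|\X_n\|_{\H}^4\d s$ via the quadratic variation $16\int\|\X_n\|_{\H}^4\|\sqrt{\Q}\X_n\|_{\H}^2\d s$), and then Gronwall's lemma produce the claimed bound uniformly in $n$ and $N\to\infty$.

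Passing to the limit, the uniform estimates give weak-$*$ compactness in $\mathrm{L}^4(\Omega;\mathrm{L}^{\infty}(0,T;\H))$ and weak compactness in $\mathrm{L}^2(\Omega\times[0,T];\V)$; the limits of $\B(\X_n)$ and $\mathcal{C}(\X_n)$ are identified by a stochastic Minty--Browder argument. The Forchheimer operator $\mathcal{C}$ is monotone for every $r\geq 1$, while in two dimensions Ladyzhenskaya's inequality $\|\u\|_{\wi\L^4}^2\leq\sqrt{2}\|\u\|_{\H}\|\u\|_{\V}$ provides the local monotonicity
\begin{align*}
\langle \B(\u)-\B(\v), \u-\v\rangle \leq \tfrac{\mu}{2}\|\u-\v\|_{\V}^2 + \tfrac{C}{\mu}\|\v\|_{\H}^2\|\v\|_{\V}^2\|\u-\v\|_{\H}^2 .
\end{align*}
Applying It\^o to $\|\X^{(1)}-\X^{(2)}\|_{\H}^2\exp\bigl(-\tfrac{C}{\mu}\int_0^{\cdot}\|\X^{(2)}\|_{\H}^2\|\X^{(2)}\|_{\V}^2\d s\bigr)$ and invoking monotonicity of $\mathcal{C}$ kills every positive contribution, and the exponential weight is $\mathbb{P}$-a.s.\ finite thanks to the $\int\|\X\|_{\H}^2\|\X\|_{\V}^2\d s$ bound just obtained; this yields pathwise uniqueness. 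The main obstacle is making the Minty--Browder/energy-equality loop rigorous in the stochastic setting: one must show an energy identity (not merely inequality) for the limit $\X$, which requires strong convergence of $\X_n$ in $\mathrm{L}^2(\Omega\times[0,T];\H)$ (obtained, e.g., by an Aubin--Lions-type compactness in probability coupled with Skorokhod/martingale representation), after which the weighted Gronwall argument closes identification and uniqueness simultaneously.
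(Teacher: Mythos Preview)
The paper does not prove this theorem; it is quoted verbatim from \cite{MTM8} (Theorem 3.7 there) and used as a black box. The only information the paper gives about the proof is the sentence in the introduction: ``by exploiting a monotonicity property of the linear and nonlinear operators as well as a stochastic generalization of the Minty--Browder technique, the author in \cite{kkmtm,MTM8} established the existence and uniqueness of a global strong solution.'' Your outline---Galerkin truncation, $\|\X_n\|_{\H}^4$ It\^o estimate with BDG and Gronwall, weak limits, and identification of the nonlinear terms through local monotonicity of $\B$ (via Ladyzhenskaya) plus monotonicity of $\mathcal{C}$---is exactly this programme, so there is nothing to compare against beyond that.

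Two minor remarks on your sketch. First, in the closing paragraph you suggest that the Minty--Browder loop requires strong convergence of $\X_n$ in $\mathrm{L}^2(\Omega\times[0,T];\H)$ obtained through a Skorokhod/Aubin--Lions step. That is one route, but the variational framework of Liu--R\"ockner (and the argument in \cite{MTM8}) avoids it: one passes to the limit in the energy identity \eqref{a34} using only weak lower semicontinuity and then runs the monotonicity trick with the exponential weight $e^{-\int_0^{\cdot}\rho(\X)\,\d s}$, $\rho(\X)=\tfrac{C}{\mu^3}\|\X\|_{\H}^2\|\X\|_{\V}^2$, directly on the limit process; no tightness argument is needed because the noise is additive and the Galerkin solutions live on the fixed probability space. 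Second, your local monotonicity bound is stated with $\tfrac{C}{\mu}$ in front of $\|\v\|_{\H}^2\|\v\|_{\V}^2$; the Young step that produces this particular weight actually gives $\tfrac{C}{\mu^3}$ (you used $\|\w\|_{\V}^{3/2}$ against the remaining factor with exponents $4/3$ and $4$). This is cosmetic but worth tracking since the exponential weight must match.
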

\begin{remark}
	Even though one obtains $\X\in\mathrm{L}^{r+1}(\Omega;\mathrm{L}^{r+1}(0,T;\widetilde{\L}^{r+1}))$ from the It\^o formula \eqref{a34}, for $d=2$ and $r\in\{1,2,3\}$, it can be shown that $\mathrm{L}^4(\Omega;\mathrm{L}^{\infty}(0,T;\H)\cap\mathrm{L}^2(0,T;\V))\subset \mathrm{L}^{r+1}(\Omega;\mathrm{L}^{r+1}(0,T;\widetilde{\L}^{r+1}))$ by using Gagliardo-Nirenberg's and H\"older's inequalities, that is,
	\begin{align*}
		\E\bigg[\int_0^T\|\X(t)\|_{\wi\L^{r+1}}^{r+1}\d t\bigg]\leq CT^{\frac{3-r}{2}}\bigg\{\E\bigg[\sup_{t\in[0,T]}\|\X(t)\|_{\H}^4\bigg]\bigg\}^{1/2}\bigg\{\E\bigg[\bigg(\int_0^T\|\X(t)\|_{\V}^2\d t\bigg)^{r-1}\bigg]\bigg\}^{1/2}.
	\end{align*} 
\end{remark}

	\section{Existence and uniqueness of invariant measures}\label{sec4} \setcounter{equation}{0}
	We know that the system  \eqref{32}
	 has a unique strong solution (cf. \cite{akmtm,MTM8}) $$\X\in\mathrm{L}^4(\Omega;\mathrm{L}^{\infty}(0,T;\H)\cap\mathrm{L}^2(0,T;\V))$$ having a  modification with paths in $\C([0,T];\H)\cap\mathrm{L}^2(0,T;\V)$, $\mathbb{P}$-a.s. Let us now introduce the \emph{transition semigroup} associated to the  problem \eqref{32}. 
	 For any $\psi\in\mathscr{B}_b(\H)$ and $t\geq0$, we define the family of operators $\{\P_t\}_{t\geq0}$ as 
	 \begin{align}\label{tight1}
	 	(\P_t\psi)(\x)=\E\left[\psi(\X(t,\x))\right], \ \x\in\H,
	 \end{align} 
 where $\X=\X(t,\x)$ is the unique strong solution of the SCBF system \eqref{32}. It is well-known from \cite[Corollary 23]{MO} that the transition function is jointly measurable, that is, for any Borel subset $\Gamma$ of $\H$, the map $\H\times[0,\infty)\ni(\x,t)\mapsto\mathbb{P}\{\X(t,\x)\in\Gamma\}\in\R$ is measurable. Thus, $\P_t\psi$ is also measurable for any $\psi\in\mathscr{B}_b(\H)$ and hence $\P_t$ maps $\mathscr{B}_b(\H)$ into itself for every $t\geq0$. Since the unique solution of \eqref{32} is an $\H$-valued continuous process, therefore from \cite[Theorem 27]{MO}, it is also a Markov process and hence we deduce that the family of operators $\{\P_t\}_{t\geq0}$ is a Markov semigroup, namely $\P_{t+s}=\P_t \P_s$ for any $t,s\geq0$. 

   Furthermore, for every $\psi\in\C_b(\H)$, the function $\H\ni\x\mapsto\E\left[\psi(\X(t,\x))\right]\in\R$ is continuous and hence the semigroup $\{\P_t\}_{t\geq0}$ is Feller (see \cite[Chapter 4]{EANK}). But in general, the restriction of $\{\P_t\}_{t\geq0}$ to $\C_b(\H)$ is not strongly continuous semigroup (see \cite{SC2} for counter examples). In fact one can prove that, under certain hypothesis, $\{\P_t\}_{t\geq0}$ defines a \emph{weakly continuous semigroup} on $\C_b(\H)$ (see \cite[Appendix B, pp. 305]{SC1} for the definition and properties).
Following \cite{SC2}, the infinitesimal generator of the weakly continuous semigroup $\{\P_t\}_{t\geq0}$ is the unique closed linear operator $\mathcal{N}:\D(\mathcal{N})\subset\C_b(\H)\to\C_b(\H)$ such that for any $\lambda>0,$ the following resolvent formula holds:
 \begin{align*}
 	R(\lambda,\mathcal{N})\psi(\x)=\int_0^{\infty} e^{-\lambda t} \P_t \psi(\x)\d t, 
 \end{align*}
 for all $\psi\in\C_b(\H)$ and $\x\in\H$. 
 
 \subsection{Existence of an invariant measure}\label{sec4.2}
 It is well known that the system \eqref{32} has a unique strong solution $\X(\cdot)$ with paths in $\C([0,T];\H)\cap\mathrm{L}^2(0,T;\V)\cap\mathrm{L}^{r+1}(0,T;\wi\L^{r+1})$, $\mathbb{P}$-a.s. Applying infinite-dimensional It\^o's formula to the process $\|\X(\cdot)\|_{\H}^2$, we obtain 
	\begin{align}\label{4.26}
	&	\E\left[\|\X(t)\|_{\H}^2+2\mu\int_0^t\|\X(s)\|_{\V}^2\d s+2\alpha\int_0^t\|\X(s)\|_{\H}^2\d s+2\beta\int_0^t\|\X(s)\|_{\wi\L^{r+1}}^{r+1}\d s\right]\nonumber\\&=\|\x\|_{\H}^2+\Tr(\Q)t,
	\end{align}
	for all $t\in[0,T]$. Consequently, we have 
	\begin{align*}
			\frac{2\mu}{t}\E\left[\int_0^t\|\X(s)\|_{\V}^2\d s\right]\leq\frac{1}{t_0}\|\x\|_{\H}^2+\Tr(\Q), \text{ for all } t>t_0.
	\end{align*}
   By applying Markov's inequality, we obtain
   \begin{align}\label{tight}
   	\lim\limits_{r\to\infty}\sup\limits_{t>t_0}\frac{1}{t}\int_0^t \P\{\|\X(s)\|_{\V}>r\}\d s\leq\lim \limits_{r\to\infty}\sup\limits_{t>t_0}\frac{1}{r^2}\E\left[\frac{1}{t}\int_0^t \|\X(s)\|_{\V}^2\d s\right]=0.
   \end{align}
Let us set
\begin{align*}
	\zeta_{t,\x_0}(\cdot)=\frac{1}{t}\int_0^t \lambda_{s,\x_0}(\cdot)\d s,
\end{align*}
where, $\lambda_{t,\x_0}(\Lambda)=\P\{\X(t,\x_0)\in\Lambda\}, \ \Lambda\in\mathscr{B}(\H)$, is the law of $\X(t,\x_0)$ for each $\x_0\in\H$. Hence, along with the estimate in \eqref{tight}, it is clear that the sequence of probability measures $\{\zeta_{t,\x_0}\}_{t>0}$ is tight.
	Then, it follows by the Krylov-Bogoliubov theorem (cf. \cite[Theorem 3.1.1, Chapter 3]{gdp2}) that there is an invariant measure $\eta$ for the transition semigroup $(\P_t)_{t\geq0}$, that is, 
	\begin{align*}
	\int_{\H}(\P_t\psi)(\x)\eta(\d \x)=\int_{\H}\psi(\x)\eta(\d \x), \ \text{ for all }\ \psi\in\C_b(\H). 
	\end{align*}

  \subsection{Transition semigroup in $\L^2(\H,\eta)$} Let $\psi\in\C_b(\H)$. By using H\"older's inequality and thanks to the invariance of $\eta$, we obtain following:
\begin{align}\label{extd}
	\int_{\H} (\P_t\psi(\x))^2\eta(\d\x)\leq\int_{\H} \psi^2(\x)\eta(\d\x).
\end{align}
Since $\C_b(\H)$ is dense in $\L^2(\H,\eta)$ (\cite{gdp1}), therefore by using the Lebesgue dominated convergence theorem, $(\P_t)_{t\geq0}$ can be uniquely extendable to a strongly continuous semigroup of contractions (still denoted by $\P_t$) in $\L^2(\H,\eta)$(see \cite{gdp1,gdp7}). We shall denote by $\mathcal{N}_2: \D(\mathcal{N}_2)\subset\L^2(\H;\eta)\to\L^2(\H;\eta)$, the infinitesimal generator of $\mathrm{P}_t$. 

Let us introduce an algebra of \emph{exponential functions}: 
\begin{align*}
	\mathscr{E}_{\A}(\H):=\mathrm{linspan}\{\psi_h: h\in\D(\A)\},
\end{align*}
where $\psi_h:\H\ni\x\mapsto\psi_h(\x)=e^{i(h,\x)}$, where $i=\sqrt{-1}$. From \cite[Proposition 1.2, Chapter 1, pp. 7]{gdp1} and dominated convergence theorem, it is easy to see that $\mathscr{E}_{\A}(\H)$ is dense in $\L^2(\H;\eta)$. Let us define on $\mathscr{E}_{\A}(\H)$, the following Kolmogorov differential operator:
\begin{align}\label{4p5}
	(\mathcal{N}_0\psi)(\x)=\frac{1}{2}\Tr\left[\Q\D_{\x}^2\psi(\x)\right]-(\mu\A\x+\alpha\x+\B(\x)+\beta\mathcal{C}(\x),\D_{\x}\psi(\x)), \ \text{ for all }\ \psi\in\mathscr{E}_{\A}(\H),
\end{align}
where $\Tr$ represents the trace and $\D_{\x}^k$, $k=1,2$ denotes the $k^{\mathrm{th}}$  Fr\'echet derivative with respect to the initial data $\x$. One of the interesting problem  of this work is to understand the relationship between the abstract operator $\mathcal{N}_2$ and the Kolmogorov differential operator $\mathcal{N}_0$.

\subsection{Uniqueness of invariant measures} Under some assumptions on $\mu,\alpha$ and $\Q$, let us  show that the invariant measure obtained in Section \ref{sec4.2} is unique. Let us first obtain some exponential moments. 
	\begin{lemma}[Exponential moments]\label{lem4.1}
Assume that
 \begin{align}\label{improve}
	0\leq\upsigma\leq\frac{2\alpha+\mu\uplambda_1}{2\Tr(\Q)}.
\end{align}
 Then, we have 
		\begin{align}\label{4p6}
			\int_{\H}e^{\upsigma\|\x\|_{\H}^2}\eta(\d\x)\leq 2 e^{\frac{2\upsigma \Tr(\Q)}{\mu\uplambda_1}}.
		\end{align}
		Moreover, we get 
		\begin{align}\label{4p7}
			\int_{\H}\|\x\|_{\V}^2 e^{\upsigma \|\x\|_{\H}^2}\eta(\d\x)\leq \frac{2\Tr(\Q)}{\mu}e^{\frac{2\upsigma \Tr(\Q)}{\mu\uplambda_1} }\ \text{ and }\ \int_{\H}\|\x\|_{\wi\L^{r+1}}^{r+1} e^{\upsigma \|\x\|_{\H}^2}\eta(\d\x)\leq\frac{\Tr(\Q)}{\beta}e^{\frac{2\upsigma \Tr(\Q)}{\mu\uplambda_1}}. 
		\end{align}
	\end{lemma}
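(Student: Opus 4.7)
I would prove all three estimates at once by combining It\^o's formula for $\varphi(\x) = e^{\upsigma\|\x\|_{\H}^2}$ with the invariance of $\eta$, then reducing the problem to a first-order linear ODE for the Laplace-type transform $F(\upsigma) := \int_{\H} e^{\upsigma\|\x\|_{\H}^2}\eta(\d\x)$. A direct computation yields $\D_{\x}\varphi(\x) = 2\upsigma\x e^{\upsigma\|\x\|_{\H}^2}$ and $\Tr[\Q\D_{\x}^2\varphi(\x)] = \bigl[4\upsigma^2\|\sqrt{\Q}\x\|_{\H}^2 + 2\upsigma\Tr(\Q)\bigr]e^{\upsigma\|\x\|_{\H}^2}$. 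Using the cancellation $(\B(\X),\X) = 0$ together with $(\A\X,\X) = \|\X\|_{\V}^2$ and $(\mathcal{C}(\X),\X) = \|\X\|_{\wi\L^{r+1}}^{r+1}$, It\^o's formula along the solution of \eqref{32} gives
\begin{align*}
\d e^{\upsigma\|\X\|_{\H}^2}
&= \upsigma e^{\upsigma\|\X\|_{\H}^2}\bigl[\Tr(\Q) + 2\upsigma\|\sqrt{\Q}\X\|_{\H}^2 - 2\mu\|\X\|_{\V}^2 - 2\alpha\|\X\|_{\H}^2 - 2\beta\|\X\|_{\wi\L^{r+1}}^{r+1}\bigr]\d t \\
&\qquad + 2\upsigma e^{\upsigma\|\X\|_{\H}^2}(\sqrt{\Q}\d\W,\X).
\end{align*}

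\textbf{ODE in $\upsigma$ and Gronwall.} Starting from $\X(0) \sim \eta$, stationarity forces $t \mapsto \E[e^{\upsigma\|\X(t)\|_{\H}^2}]$ to be constant, so the expectation of the drift must vanish, producing the integration-by-parts identity
\begin{align*}
\int_{\H} e^{\upsigma\|\x\|_{\H}^2}\bigl[2\mu\|\x\|_{\V}^2 + 2\alpha\|\x\|_{\H}^2 + 2\beta\|\x\|_{\wi\L^{r+1}}^{r+1}\bigr]\eta(\d\x)
= \int_{\H} e^{\upsigma\|\x\|_{\H}^2}\bigl[\Tr(\Q) + 2\upsigma\|\sqrt{\Q}\x\|_{\H}^2\bigr]\eta(\d\x).
\end{align*}
Bounding $\|\sqrt{\Q}\x\|_{\H}^2 \leq \Tr(\Q)\|\x\|_{\H}^2$, invoking Poincar\'e's inequality \eqref{poin} in the form $\|\x\|_{\V}^2 \geq \uplambda_1\|\x\|_{\H}^2$, and discarding the non-negative Forchheimer contribution leaves
\begin{align*}
\bigl[2(\mu\uplambda_1 + \alpha) - 2\upsigma\Tr(\Q)\bigr] F'(\upsigma) \leq \Tr(\Q) F(\upsigma),
\end{align*}
where $F'(\upsigma) = \int_{\H}\|\x\|_{\H}^2 e^{\upsigma\|\x\|_{\H}^2}\eta(\d\x)$ by differentiation under the integral. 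The hypothesis \eqref{improve} is exactly what is needed to force the coefficient on the left to be at least $\mu\uplambda_1$, yielding $F'(\upsigma) \leq \frac{\Tr(\Q)}{\mu\uplambda_1}F(\upsigma)$ with $F(0) = 1$. Gronwall's inequality in $\upsigma$ then gives $F(\upsigma) \leq e^{\upsigma\Tr(\Q)/(\mu\uplambda_1)} \leq 2 e^{2\upsigma\Tr(\Q)/(\mu\uplambda_1)}$, which is \eqref{4p6}. The two bounds in \eqref{4p7} follow by returning to the identity, isolating the $\V$- and $\wi\L^{r+1}$-terms on the left respectively, and inserting the estimates for $F(\upsigma)$ and $F'(\upsigma)$ just obtained.

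\textbf{Main obstacle.} The decisive technical point is the mutually dependent justification of It\^o's formula for the unbounded test function $\varphi$ and of the a priori finiteness of $F(\upsigma)$, without which the integration by parts against $\eta$ is meaningless. Both issues must be handled simultaneously by running the whole argument for a bounded smooth truncation $\varphi_R$ (for instance $\varphi_R(\x) = e^{\upsigma(\|\x\|_{\H}^2 \wedge R)}$), deriving the analogous differential inequality for $F_R(\upsigma) := \int_{\H}\varphi_R(\x)\eta(\d\x)$ with a Gronwall bound uniform in $R$, and passing to the limit $R \to \infty$ by monotone convergence; the uniform $R$-control rests on the positive buffer $2(\mu\uplambda_1 + \alpha) - 2\upsigma\Tr(\Q) \geq \mu\uplambda_1 > 0$ provided by \eqref{improve}, together with the polynomial moment bounds on $\eta$ already inherited from the energy estimates of Theorem \ref{exis2}.
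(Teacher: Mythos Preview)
Your argument is correct and in fact yields the sharper bound $F(\upsigma)\le e^{\upsigma\Tr(\Q)/(\mu\uplambda_1)}$, which trivially implies \eqref{4p6}. The starting identity, the estimate $\|\sqrt{\Q}\x\|_{\H}^2\le\Tr(\Q)\|\x\|_{\H}^2$, the use of Poincar\'e, and the need for an approximation to justify the invariance identity for the unbounded test function are all shared with the paper. The genuine difference lies in how the exponential integral is closed: the paper does not view $F$ as a function of $\upsigma$ but instead, at fixed $\upsigma$, splits $\H$ into $\{\|\x\|_{\H}\le R\}$ and its complement, uses \eqref{4.9} and Poincar\'e to control the tail by $\frac{\Tr(\Q)}{\mu\uplambda_1 R^2}F(\upsigma)$, and then chooses $R^2=\frac{2\Tr(\Q)}{\mu\uplambda_1}$ so that this coefficient becomes $\tfrac12$; solving yields the stated bound $2e^{2\upsigma\Tr(\Q)/(\mu\uplambda_1)}$. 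Your ODE-in-$\upsigma$ route is arguably cleaner and loses less, at the price of having to check the differential inequality for every $\upsigma'\in[0,\upsigma]$ (which is immediate since the hypothesis \eqref{improve} is monotone in $\upsigma$). For \eqref{4p7} the paper reads the bounds directly off the absorbed inequality \eqref{4.9} combined with \eqref{4p6}; your stated plan of inserting both $F(\upsigma)$ and $F'(\upsigma)$ also works, though it produces slightly different constants unless you first perform the same absorption as in \eqref{4.8}--\eqref{4.9}.
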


\begin{proof}
	Let us consider the Kolmogorov operator $\mathcal{N}_0$ defined in \eqref{4p5} and the function $\psi(\x)=e^{\upsigma\|\x\|_{\H}^2}$. Therefore, we have 
	\begin{align*}
		\D_{\x}\psi(\x)=2\upsigma  e^{\upsigma \|\x\|_{\H}^2}\x , \ \D_{\x}^2\psi(\x)=\upsigma  e^{\upsigma \|\x\|_{\H}^2}(4\upsigma (\x\otimes\x)+2), \ \x\in\H. 
	\end{align*}
	Approximating  $\psi$ by a family of smooth functions, we may assume that $\psi$ belongs to the domain $\D(\mathcal{N}_2)$ of the infinitesimal generator $\mathcal{N}_2$ of the semigroup $\P_t$ in $\L^2(\H,\eta)$, we obtain 
	\begin{align}\label{4.6}
	(\mathcal{N}_2\psi)(\x)=e^{\upsigma \|\x\|_{\H}^2}\left[\upsigma \Tr(\Q)+2\upsigma ^2\|\Q^{1/2}\x\|_{\H}^2-2\mu\upsigma \|\x\|_{\V}^2-2\alpha\upsigma \|\x\|_{\H}^2-2\beta\upsigma \|\x\|_{\wi\L^{r+1}}^{r+1}\right].
	\end{align}
	Integrating the above equality over $\H$ with respect to $\eta$ and using the fact that $\eta$ is invariant (\cite[Definition 5.1, Remark 5.2]{mr13}, see Step 4 of Theorem \ref{thm4.7} below), that is, 
	$$\int_{\H}{\mathcal{N}_2}\psi (\x) \d\eta(\x)=0,$$
	we get 
	\begin{align}\label{4.7}
	\int_{\H}e^{\upsigma \|\x\|_{\H}^2}\left(\mu\|\x\|_{\V}^2+\alpha\|\x\|_{\H}^2+\beta\|\x\|_{\wi\L^{r+1}}^{r+1}-\upsigma \|\Q^{1/2}\x\|_{\H}^2\right)\d\eta(\x)=\frac{1}{2}\Tr(\Q)\int_{\H}e^{\upsigma \|\x\|_{\H}^2}\d\eta(\x).
	\end{align}
Note that for $\x\in\H$, we can write $\Q^{\frac{1}{2}}\x=\sum\limits_{j=1}^{\infty} \sqrt{\uplambda_j} (\x,\boldsymbol{e}_j)\boldsymbol{e}_j$. Then, we calculate 
\begin{align*}
	\|\Q^{\frac{1}{2}}\x\|_{\H}^2&=(\Q^{\frac{1}{2}}\x,\Q^{\frac{1}{2}}\x)=\bigg(\sum\limits_{j=1}^{\infty} \sqrt{\uplambda_j} (\x,\boldsymbol{e}_j)\boldsymbol{e}_j,\sum\limits_{k=1}^{\infty} \sqrt{\uplambda_k} (\x,\boldsymbol{e}_k)\boldsymbol{e}_k\bigg)\nonumber\\&=
	\sum\limits_{j=1}^{\infty}\sum\limits_{k=1}^{\infty} \sqrt{\uplambda_j \uplambda_k} (\x,\boldsymbol{e}_j)(\x,\boldsymbol{e}_k)(\boldsymbol{e}_j,\boldsymbol{e}_k)=\sum\limits_{j=1}^{\infty} \uplambda_j (\x,\boldsymbol{e}_j)^2\leq\sum\limits_{j=1}^{\infty} \uplambda_j\|\x\|_{\H}^2= \Tr(\Q)\|\x\|_{\H}^2,
\end{align*} 
where we have used the definition of the trace class operator $\Q$, that is, $\Tr(\Q)=\sum\limits_{j=1}^{\infty} \uplambda_j$. This yields that 
\begin{align}\label{trop}
	\|\Q^{\frac{1}{2}}\|_{\mathcal{L}(\H)}^2\leq\Tr(\Q).
\end{align}
Then, by using the definition of operator norm and \eqref{trop}, we calculate
\begin{align}\label{trop1}
\|\Q^{\frac{1}{2}}\x\|_{\H}^2\leq\|\Q^{\frac{1}{2}}\|_{\mathcal{L}(\H)}^2\|\x\|_{\H}^2\leq\Tr(\Q)\|\x\|_{\H}^2.
\end{align}
Thus, for $\upsigma\leq\frac{2\alpha+\mu\uplambda_1}{2\Tr(\Q)}$, it is immediate that  
	\begin{align}\label{4.8}
&\mu\|\x\|_{\V}^2+\alpha\|\x\|_{\H}^2+\beta\|\x\|_{\wi\L^{r+1}}^{r+1}
-\upsigma \|\Q^{1/2}\x\|_{\H}^2\nonumber\\&\geq\mu\|\x\|_{\V}^2+\alpha\|\x\|_{\H}^2+\beta\|\x\|_{\wi\L^{r+1}}^{r+1}-\upsigma \Tr(\Q)\|\x\|_{\H}^2\nonumber\\&\geq\mu\|\x\|_{\V}^2+\alpha\|\x\|_{\H}^2+\beta\|\x\|_{\wi\L^{r+1}}^{r+1}-\left(\alpha+\frac{\mu\uplambda_1}{2}\right)\|\x\|_{\H}^2
\nonumber\\&\geq\frac{\mu}{2}\|\x\|_{\V}^2+\beta\|\x\|_{\wi\L^{r+1}}^{r+1},
	\end{align}
	where we have employed the Poincar\'e inequality. From \eqref{4.7}, we infer that 
	\begin{align}\label{4.9}
		\int_{\H}e^{\upsigma \|\x\|_{\H}^2}\left(\frac{\mu}{2}\|\x\|_{\V}^2+\beta\|\x\|_{\wi\L^{r+1}}^{r+1}\right)\d\eta(\x)\leq \frac{1}{2}\Tr(\Q)\int_{\H}e^{\upsigma \|\x\|_{\H}^2}\d\eta(\x).
	\end{align}
Using \eqref{4.9}, we find 
\begin{align}\label{412}
	\int_{\H}e^{\upsigma \|\x\|_{\H}^2}\d\eta(\x)&=	\int_{\{\x\in\H:\|\x\|_{\H}\leq R\}}e^{\upsigma \|\x\|_{\H}^2}\d\eta(\x)+	\int_{\{\x\in\H:\|\x\|_{\H}> R\}}e^{\upsigma \|\x\|_{\H}^2}\d\eta(\x)\nonumber\\&\leq e^{\upsigma  R^2}+\frac{1}{R^2}	\int_{\H}\|\x\|_{\H}^2e^{\upsigma \|\x\|_{\H}^2}\d\eta(\x)\leq e^{\upsigma  R^2}+\frac{1}{\uplambda _1R^2}	\int_{\H}\|\x\|_{\V}^2e^{\upsigma \|\x\|_{\H}^2}\d\eta(\x)\nonumber\\&\leq e^{\upsigma  R^2}+ \frac{\Tr(\Q)}{\mu\uplambda_1R^2}\int_{\H}e^{\upsigma \|\x\|_{\H}^2}\d\eta(\x).
\end{align}
Let us now choose an $R>0$ such that $$R^2=\frac{2\Tr(\Q)}{\mu\uplambda_1},$$  so that the estimate \eqref{4p7} follows easily from \eqref{4.9}. 
\end{proof}

The following result is available in \cite{akmtm}, and for completeness, we provide a proof here. 
\begin{lemma}\label{lem4.3}
Under the condition \eqref{improve}, for $\x\in\H$, we have 
	\begin{align}\label{4.14}
	\E\left\{\exp\left[\upsigma\left(\|\X(t)\|_{\H}^2+\mu\int_0^t\|\X(s)\|_{\V}^2\d s+\beta\int_0^t\|\X(s)\|_{\wi\L^{r+1}}^{r+1}\d s\right)\right]\right\}\leq e^{\upsigma(\|\x\|_{\H}^2+\Tr(\Q)t)},
	\end{align}
	for all $t\in[0,T]$. 
\end{lemma}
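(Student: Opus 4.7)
The plan is to apply Itô's formula to the auxiliary exponential process
\begin{align*}
M(t) := \exp\Bigl\{\upsigma\|\X(t)\|_{\H}^2 + \upsigma\mu\!\!\int_0^t\!\!\|\X(s)\|_{\V}^2\d s + \upsigma\beta\!\!\int_0^t\!\!\|\X(s)\|_{\wi\L^{r+1}}^{r+1}\d s - \upsigma\Tr(\Q)t\Bigr\}
\end{align*}
and to show that, under hypothesis \eqref{improve}, $M(\cdot)$ is a nonnegative supermartingale. Since $M(0) = e^{\upsigma\|\x\|_{\H}^2}$, taking expectations and transporting the deterministic factor $e^{\upsigma\Tr(\Q)t}$ to the right-hand side then yields \eqref{4.14}.

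To carry this out, I would first compute the Itô differential of $M(t)$. Setting $\psi(\x) := e^{\upsigma\|\x\|_{\H}^2}$, one has $\D_{\x}\psi(\x) = 2\upsigma\psi(\x)\x$ and $\D_{\x}^2\psi(\x) = 2\upsigma\psi(\x)\I + 4\upsigma^2\psi(\x)(\x\otimes\x)$, so that $\Tr[\Q\D_{\x}^2\psi(\x)] = 2\upsigma\psi(\x)\Tr(\Q) + 4\upsigma^2\psi(\x)\|\Q^{1/2}\x\|_{\H}^2$. Combining the energy identity \eqref{a34} for $\|\X(t)\|_{\H}^2$ with the product rule applied to the factor $e^{J(t)}$ (where $J(t)$ is the absolutely continuous adjustment inside the exponential, contributing no quadratic variation) produces, after cancellation of the $\pm\upsigma\Tr(\Q)$ terms, the differential
\begin{align*}
\d M(t) = \upsigma M(t)\bigl[2\upsigma\|\Q^{1/2}\X\|_{\H}^2 - \mu\|\X\|_{\V}^2 - 2\alpha\|\X\|_{\H}^2 - \beta\|\X\|_{\wi\L^{r+1}}^{r+1}\bigr]\d t + 2\upsigma M(t)\bigl(\sqrt{\Q}\d\W(t),\X(t)\bigr).
\end{align*}
Using the trace bound $\|\Q^{1/2}\x\|_{\H}^2 \leq \Tr(\Q)\|\x\|_{\H}^2$ established in \eqref{trop1} and the Poincaré inequality $\|\X\|_{\V}^2 \geq \uplambda_1\|\X\|_{\H}^2$, the drift is dominated by $\upsigma M(t)\bigl[(2\upsigma\Tr(\Q) - \mu\uplambda_1 - 2\alpha)\|\X\|_{\H}^2 - \beta\|\X\|_{\wi\L^{r+1}}^{r+1}\bigr]\d t$, and the hypothesis \eqref{improve} is precisely the condition ensuring that $2\upsigma\Tr(\Q) - \mu\uplambda_1 - 2\alpha \leq 0$. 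Consequently the drift of $M(t)$ is nonpositive.

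The main technical obstacle is the rigorous justification of Itô's formula for this exponential functional, since the standard infinite-dimensional Itô formula available for the solution to \eqref{32} is formulated at the level of $\|\X\|_{\H}^2$ rather than its exponential, and the weight may be unbounded. I would circumvent this by introducing the localizing stopping times
\begin{align*}
\tau_N := \inf\Bigl\{t\in[0,T] : \|\X(t)\|_{\H}^2 + \!\!\int_0^t\!\!\bigl(\|\X(s)\|_{\V}^2 + \|\X(s)\|_{\wi\L^{r+1}}^{r+1}\bigr)\d s > N\Bigr\},
\end{align*}
with the convention $\inf\emptyset = T$. On $[0,\tau_N]$ every quantity of interest is bounded, so Itô's formula applies unambiguously, $M(\cdot\wedge\tau_N)$ is a genuine supermartingale, and $\E[M(t\wedge\tau_N)] \leq e^{\upsigma\|\x\|_{\H}^2}$ for every $N$. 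Since $\X\in\C([0,T];\H)\cap\mathrm{L}^2(0,T;\V)\cap\mathrm{L}^{r+1}(0,T;\wi\L^{r+1})$ $\mathbb{P}$-a.s., we have $\tau_N\uparrow T$ almost surely as $N\to\infty$, and Fatou's lemma on the left-hand side delivers \eqref{4.14}. Alternatively, one could work with finite-dimensional Galerkin approximations of \eqref{32}, where Itô's formula is unproblematic, establish the estimate at that level, and pass to the limit using lower semicontinuity together with the convergence of the Galerkin solutions to $\X$.
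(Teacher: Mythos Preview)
Your argument is correct and follows essentially the same route as the paper: both apply It\^o's formula to the exponential of the energy-type process, use the trace bound \eqref{trop1} together with Poincar\'e's inequality to control the quadratic-variation contribution under hypothesis \eqref{improve}, and then take expectations. The only cosmetic difference is that you fold the factor $e^{-\upsigma\Tr(\Q)t}$ into $M(t)$ so as to obtain a supermartingale directly, whereas the paper leaves it out, arrives at the integral inequality $\E[\Phi(t)]\leq e^{\upsigma\|\x\|_{\H}^2}+\upsigma\Tr(\Q)\int_0^t\E[\Phi(s)]\d s$, and closes with Gr\"onwall; these are equivalent. Your explicit localization via the stopping times $\tau_N$ and Fatou's lemma is a welcome technical addition that the paper omits.
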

\begin{proof}
	Let us set 
	$\Z(t):=\|\X(t)\|_{\H}^2+\mu\int_0^t\|\X(s)\|_{\V}^2\d s+\beta\int_0^t\|\X(s)\|_{\wi\L^{r+1}}^{r+1}\d s.$ Then $\Z(\cdot)$ satisfies the following stochastic differential: for $t\in(0,T),$ $\mathbb{P}$-a.s. 
	\begin{align}\label{414}
	\d\Z(t)=-(\mu\|\X(t)\|_{\V}^2+2\alpha\|\X(t)\|_{\H}^2+\beta\|\X(t)\|_{\wi\L^{r+1}}^{r+1}+\Tr(\Q))\d t+2(\sqrt{\Q}\d\W(t),\X(t)).
	\end{align}
	We apply the It\^o formula to the process $\Phi(\cdot)=e^{\upsigma \Z(\cdot)}$ to obtain 
	\begin{align}\label{4.15}
	\d\Phi(t)&=\upsigma\Phi(t)\bigg(\left[-\mu\|\X(t)\|_{\V}^2-2\alpha\|\X(t)\|_{\H}^2-\beta\|\X(t)\|_{\wi\L^{r+1}}^{r+1}+\Tr(\Q)\right]\d t+2(\sqrt{\Q}\d\W(t),\X(t))\nonumber\\&\qquad+2\upsigma\|\Q^{1/2}\X(t)\|_{\H}^2\d t\bigg).
	\end{align}
	Using \eqref{improve}, we get 
	\begin{align*}
	2\upsigma\|\Q^{1/2}\X\|_{\H}^2\leq 2\upsigma\Tr(\Q)\|\X\|_{\H}^2\leq{(\mu\uplambda_1+2\alpha)}\|\X\|_{\H}^2. 
	\end{align*}
	Thus, using the Poincar\'e inequality,  it is immediate from \eqref{4.15} that 
	\begin{align}\label{416}
	\Phi(t)\leq e^{\upsigma\|\x\|_{\H}^2}+\upsigma\Tr(\Q)\int_0^t\Phi(s)\d s+2\upsigma\int_0^t\Phi(s)(\sqrt{\Q}\d\W(s),\X(s)),
	\end{align}
for all $t\in[0,T]$, $\mathbb{P}$-a.s.	Hence, taking expectation in \eqref{416} and then using the fact that the final term appearing the right hand side of the inequality \eqref{416} is a martingale, we find 
	\begin{align}\label{4p17}
	\E\left[\Phi(t)\right]\leq e^{\upsigma\|\x\|_{\H}^2}+\upsigma\Tr(\Q)\int_0^t\E\left[\Phi(s)\right]\d s,
	\end{align}
	for all $t\in[0,T]$. An application of Gr\"onwall's inequality in \eqref{4p17} yields the estimate \eqref{4.14}.  
\end{proof}
Let us now prove an estimate for $\E\left[\|\boldsymbol{\xi}^{\boldsymbol{h}}(t,\x)\|_{\H}^2\right]$, where $\boldsymbol{\xi}^{\boldsymbol{h}}(t,\x)=\D_{\x}\X(t,\x)\boldsymbol{h},$  for all $ \x, \boldsymbol{h}\in\H$. 
\begin{lemma}[Estimates for bounds of derivatives]\label{lem4.4}
	Let us assume that 
	\begin{align}\label{419}
	\mu^2(\mu\uplambda_1+2\alpha)>4\Tr(\Q).
	\end{align}
	Then, we have 
	\begin{align}\label{4p20}
\E\left[\|\boldsymbol{\xi}^{\boldsymbol{h}}(t,\x)\|_{\H}^2\right]\leq \|\boldsymbol{h}\|_{\H}^2e^{\frac{2}{\mu^2}\|\x\|_{\H}^2}e^{-\left(\mu\uplambda_1+2\alpha-\frac{2}{\mu^2}\Tr(\Q)\right)t},
	\end{align}
	for all $t\in[0,T]$. 
\end{lemma}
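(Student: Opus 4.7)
The plan is to write down the equation satisfied by $\boldsymbol{\xi}^{\boldsymbol{h}}(t,\x)$, estimate it pathwise by an energy calculation, and then close with the exponential moment bound of Lemma \ref{lem4.3}. Formally differentiating \eqref{32} with respect to $\x$ in the direction $\boldsymbol{h}$ kills the additive noise term and yields the linear (random-coefficient) Cauchy problem
\begin{equation*}
\frac{\d\boldsymbol{\xi}^{\boldsymbol{h}}}{\d t}+\mu\A\boldsymbol{\xi}^{\boldsymbol{h}}+\alpha\boldsymbol{\xi}^{\boldsymbol{h}}+\B(\X,\boldsymbol{\xi}^{\boldsymbol{h}})+\B(\boldsymbol{\xi}^{\boldsymbol{h}},\X)+\beta\,\mathcal{C}'(\X)\boldsymbol{\xi}^{\boldsymbol{h}}=0,\qquad \boldsymbol{\xi}^{\boldsymbol{h}}(0)=\boldsymbol{h},
\end{equation*}
with $\mathcal{C}'(\X)$ given by the explicit formula in \eqref{2133}. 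A standard rigorous justification would proceed through a finite-dimensional Galerkin approximation where the differentiability of the flow is automatic, obtain the estimate below uniformly in the approximation, and pass to the limit.

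Testing the equation for $\boldsymbol{\xi}^{\boldsymbol{h}}$ with $\boldsymbol{\xi}^{\boldsymbol{h}}$ itself, the antisymmetry $b(\X,\boldsymbol{\xi}^{\boldsymbol{h}},\boldsymbol{\xi}^{\boldsymbol{h}})=0$ eliminates one of the trilinear contributions, while the inspection of \eqref{2133} shows that $\langle \mathcal{C}'(\X)\boldsymbol{\xi}^{\boldsymbol{h}},\boldsymbol{\xi}^{\boldsymbol{h}}\rangle=\int_{\mathcal{O}}|\X|^{r-1}|\boldsymbol{\xi}^{\boldsymbol{h}}|^2\d\xi+(r-1)\int_{\mathcal{O}}|\X|^{r-3}(\X\cdot\boldsymbol{\xi}^{\boldsymbol{h}})^2\d\xi\ge 0$, so the Forchheimer contribution helps us and can be dropped. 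The only remaining obstacle is the surviving trilinear term $b(\boldsymbol{\xi}^{\boldsymbol{h}},\X,\boldsymbol{\xi}^{\boldsymbol{h}})$; here I would use $d=2$ Ladyzhenskaya, $\|\boldsymbol{\xi}^{\boldsymbol{h}}\|_{\widetilde{\L}^4}^2\le\sqrt{2}\,\|\boldsymbol{\xi}^{\boldsymbol{h}}\|_{\H}\|\boldsymbol{\xi}^{\boldsymbol{h}}\|_{\V}$, combined with H\"older and Young to get $2|b(\boldsymbol{\xi}^{\boldsymbol{h}},\X,\boldsymbol{\xi}^{\boldsymbol{h}})|\le\mu\|\boldsymbol{\xi}^{\boldsymbol{h}}\|_{\V}^2+\frac{2}{\mu}\|\X\|_{\V}^2\|\boldsymbol{\xi}^{\boldsymbol{h}}\|_{\H}^2$. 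Poincar\'e absorbs the remaining viscous term, leading to the pathwise differential inequality
\begin{equation*}
\frac{\d}{\d t}\|\boldsymbol{\xi}^{\boldsymbol{h}}(t)\|_{\H}^2\le -\left(\mu\uplambda_1+2\alpha\right)\|\boldsymbol{\xi}^{\boldsymbol{h}}(t)\|_{\H}^2+\frac{2}{\mu}\|\X(t)\|_{\V}^2\|\boldsymbol{\xi}^{\boldsymbol{h}}(t)\|_{\H}^2.
\end{equation*}

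Gr\"onwall's inequality applied pathwise gives
\begin{equation*}
\|\boldsymbol{\xi}^{\boldsymbol{h}}(t)\|_{\H}^2\le\|\boldsymbol{h}\|_{\H}^2\exp\!\left[-(\mu\uplambda_1+2\alpha)t+\frac{2}{\mu}\int_0^t\|\X(s)\|_{\V}^2\d s\right].
\end{equation*}
Taking expectation, the factor $\exp\bigl(\frac{2}{\mu}\int_0^t\|\X(s)\|_{\V}^2\d s\bigr)=\exp\bigl(\frac{2}{\mu^2}\cdot\mu\!\int_0^t\|\X(s)\|_{\V}^2\d s\bigr)$ is exactly what Lemma \ref{lem4.3} controls, provided we may choose $\upsigma=\frac{2}{\mu^2}$. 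The admissibility condition \eqref{improve} then becomes $\frac{2}{\mu^2}\le\frac{2\alpha+\mu\uplambda_1}{2\Tr(\Q)}$, i.e. $\mu^2(\mu\uplambda_1+2\alpha)\ge 4\Tr(\Q)$, which is precisely our standing hypothesis \eqref{419}. Dropping the nonnegative $\|\X(t)\|_{\H}^2$ and Forchheimer terms on the left of \eqref{4.14}, we get $\E\exp\bigl(\frac{2}{\mu}\int_0^t\|\X(s)\|_{\V}^2\d s\bigr)\le e^{\frac{2}{\mu^2}\|\x\|_{\H}^2}e^{\frac{2}{\mu^2}\Tr(\Q)t}$, and the advertised bound \eqref{4p20} follows immediately.

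The main technical obstacle is really just the trilinear estimate: we must split the $b$-term so that the surviving coefficient in front of $\|\X\|_{\V}^2$ is exactly $\frac{2}{\mu}$, so that the exponential moment needed matches the integrand controlled by Lemma \ref{lem4.3} and so that the resulting compatibility requirement coincides with \eqref{419}. Any wasteful splitting would force a stronger assumption than \eqref{419}, which is why the careful use of 2D Ladyzhenskaya together with the Poincar\'e absorption is crucial.
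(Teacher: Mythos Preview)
Your proof is correct and follows essentially the same route as the paper: derive the linearized equation for $\boldsymbol{\xi}^{\boldsymbol{h}}$, test against $\boldsymbol{\xi}^{\boldsymbol{h}}$, drop the nonnegative Forchheimer contribution, estimate the surviving trilinear term via 2D Ladyzhenskaya and Young with the precise splitting $\frac{\mu}{2}\|\boldsymbol{\xi}^{\boldsymbol{h}}\|_{\V}^2+\frac{1}{\mu}\|\X\|_{\V}^2\|\boldsymbol{\xi}^{\boldsymbol{h}}\|_{\H}^2$, apply Gr\"onwall pathwise, and close with Lemma~\ref{lem4.3} at $\upsigma=\frac{2}{\mu^2}$. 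The only cosmetic difference is that you phrase the compatibility condition as $\mu^2(\mu\uplambda_1+2\alpha)\ge 4\Tr(\Q)$, which is indeed what is needed to invoke Lemma~\ref{lem4.3}; the strict inequality in \eqref{419} is used only to guarantee that the resulting exponent in \eqref{4p20} is negative.
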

\begin{proof}
	We write $\boldsymbol{\xi}^{\boldsymbol{h}}(t)$ instead of $\boldsymbol{\xi}^{\boldsymbol{h}}(t,\x),$ for simplicity. Then $\boldsymbol{\xi}^{\boldsymbol{h}}(\cdot)$ satisfies 	for a.e. $t\in[0,T]$,  $\mathbb{P}$-a.s.: 
	\begin{equation}\label{4.21}
	\left\{
	\begin{aligned}
	\frac{\d\boldsymbol{\xi}^{\boldsymbol{h}}(t)}{\d t}+\mu\A\boldsymbol{\xi}^{\boldsymbol{h}}(t)+\alpha\boldsymbol{\xi}^{\boldsymbol{h}}(t)+\B'(\X(t))\boldsymbol{\xi}^{\boldsymbol{h}}(t)+\beta\mathcal{C}'(\X(t))\boldsymbol{\xi}^{\boldsymbol{h}}(t)&=\boldsymbol{0},\\
	\boldsymbol{\xi}^{\boldsymbol{h}}(0)&=\h, 
	\end{aligned}\right. 
	\end{equation}
where $\B'(\X)\boldsymbol{\xi}^{\boldsymbol{h}}=\B(\X,\boldsymbol{\xi}^{\boldsymbol{h}})+\B(\boldsymbol{\xi}^{\boldsymbol{h}},\X)$ and $\mathcal{C}'(\X)$ is defined in \eqref{2133}. Taking the inner product with $\boldsymbol{\xi}^{\boldsymbol{h}}(\cdot)$ to the first equation in \eqref{4.21}, we find 
	\begin{align}\label{4p22}
&	\frac{1}{2}\frac{\d}{\d t}\|\boldsymbol{\xi}^{\boldsymbol{h}}(t)\|_{\H}^2+\mu\|\boldsymbol{\xi}^{\boldsymbol{h}}(t)\|_{\V}^2+\alpha\|\boldsymbol{\xi}^{\boldsymbol{h}}(t)\|_{\H}^2+\beta \||\X(t)|^{\frac{r-1}{2}}\boldsymbol{\xi}^{\boldsymbol{h}}(t)\|_{\H}^2\nonumber\\&\quad+\beta(r-1) \||\X(t)|^{\frac{r-3}{2}}(\X(t)\cdot\boldsymbol{\xi}^{\boldsymbol{h}}(t))\|_{\H}^2\nonumber\\&=-\langle\B(\boldsymbol{\xi}^{\boldsymbol{h}}(t),\X(t)),\boldsymbol{\xi}^{\boldsymbol{h}}(t)\rangle \leq\|\boldsymbol{\xi}^{\boldsymbol{h}}(t)\|_{\wi\L^4}^2\|\X(t)\|_{\V}\leq\sqrt{2}\|\boldsymbol{\xi}^{\boldsymbol{h}}(t)\|_{\H}\|\boldsymbol{\xi}^{\boldsymbol{h}}(t)\|_{\V}\|\X(t)\|_{\V}\nonumber\\&\leq\frac{\mu}{2}\|\boldsymbol{\xi}^{\boldsymbol{h}}(t)\|_{\V}^2+\frac{1}{\mu}\|\X(t)\|_{\V}^2\|\boldsymbol{\xi}^{\boldsymbol{h}}(t)\|_{\H}^2, 
\end{align}
where we have used Ladyzhenskaya's, H\"older's and Young's inequalities. Thus, from \eqref{4p22}, we deduce that 
\begin{align}\label{423}
&	\frac{\d}{\d t}\|\boldsymbol{\xi}^{\boldsymbol{h}}(t)\|_{\H}^2\leq \left[-\left(\mu\uplambda_1+2\alpha\right)+\frac{2}{\mu}\|\X(t)\|_{\V}^2\right]\|\boldsymbol{\xi}^{\boldsymbol{h}}(t)\|_{\H}^2,
\end{align}
for a.e. $t\in[0,T]$. An application of Gr\"onwall's inequality in \eqref{423} gives
\begin{align}\label{424}
\|\boldsymbol{\xi}^{\boldsymbol{h}}(t)\|_{\H}^2\leq \|\boldsymbol{h}\|_{\H}^2e^{-\left(\mu\uplambda_1+2\alpha\right)t}\exp\left(\frac{2}{\mu}\int_0^t\|\X(s)\|_{\V}^2\d s\right),
\end{align}
for all $t\in[0,T]$. Taking expectation in \eqref{424}, we get 
\begin{align}\label{425}
\E\left[\|\boldsymbol{\xi}^{\boldsymbol{h}}(t)\|_{\H}^2\right]\leq \|\boldsymbol{h}\|_{\H}^2e^{-\left(\mu\uplambda_1+2\alpha\right)t}\E\left[\exp\left(\frac{2}{\mu}\int_0^t\|\X(s)\|_{\V}^2\d s\right)\right].
\end{align}
Choosing $\upsigma=\frac{2}{\mu^2}$ in \eqref{improve}, we further have 
\begin{align}\label{4p24}
\E\left[\|\boldsymbol{\xi}^{\boldsymbol{h}}(t)\|_{\H}^2\right]\leq \|\boldsymbol{h}\|_{\H}^2e^{\frac{2}{\mu^2}\|\x\|_{\H}^2}e^{-\left(\mu\uplambda_1+2\alpha-\frac{2}{\mu^2}\Tr(\Q)\right)t},
\end{align}
which converges exponentially fast to zero provided $\mu^3\uplambda_1+2\alpha\mu^2>2\Tr(\Q)$.
\end{proof}

\begin{remark}
	The computation of the estimate \eqref{trop} differs from Lemma 2.1 in the work \cite{VBGD}, where it is obtained using the operator norm definition as $\|\Q^{1/2}\x\|_{\H}^2\leq\|\Q\|_{\mathcal{L}(\H)}\|\x\|_{\H}^2$. In comparison with \cite[Lemma 2.1]{VBGD}, this simplifies several computations in this paper and we obtained the condition \eqref{419} of $\mu$ and $\alpha$  in terms of $\mathrm{Tr}(\Q)$. 
\end{remark}

Let us now prove the existence and uniqueness of invariant measure for the semigroup $\{\P_t\}_{t\geq 0}$.

\begin{theorem}\label{uniqinv}
Under the condition \eqref{419}, the invariant measure $\eta$ for $\{\P_t\}_{t\geq 0}$ is unique. 
\end{theorem}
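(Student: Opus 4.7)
The plan is to combine the exponential decay of the first variation from Lemma \ref{lem4.4} with the exponential moment bounds on invariant measures from Lemma \ref{lem4.1} to establish a Wasserstein-type contraction of the semigroup, and then conclude by averaging this contraction against any two candidate invariant measures. Throughout, set $\kappa := \mu\uplambda_1 + 2\alpha - \tfrac{2}{\mu^2}\Tr(\Q)$, which is strictly positive by \eqref{419}.

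\textbf{Gradient estimate.} For a test function $\varphi \in \C_b^1(\H)$, I would first justify (by mollifying $\varphi$ and, if needed, regularizing $\X$) the chain rule
\begin{align*}
(\D_{\x}\P_t\varphi(\x), \h) = \E\!\left[(\D_{\x}\varphi(\X(t,\x)), \boldsymbol{\xi}^{\boldsymbol{h}}(t,\x))\right], \quad \h\in\H,
\end{align*}
where $\boldsymbol{\xi}^{\boldsymbol{h}}$ solves the variational equation \eqref{4.21}. Cauchy-Schwarz together with Lemma \ref{lem4.4} then yields $\|\D_{\x}\P_t\varphi(\x)\|_{\H} \leq \|\D\varphi\|_{\infty}\, e^{\|\x\|_{\H}^2/\mu^2}\, e^{-\kappa t/2}$. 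Integrating along the segment $\z_s := (1-s)\x + s\y$, $s\in[0,1]$, and using convexity of $\|\cdot\|_{\H}^2$ to bound $\|\z_s\|_{\H}^2 \leq \|\x\|_{\H}^2 + \|\y\|_{\H}^2$, I obtain the pointwise Lipschitz-type estimate
\begin{align*}
|\P_t\varphi(\x) - \P_t\varphi(\y)| \leq \|\D\varphi\|_{\infty}\, \|\x-\y\|_{\H}\, e^{-\kappa t/2}\, e^{(\|\x\|_{\H}^2 + \|\y\|_{\H}^2)/\mu^2}.
\end{align*}

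\textbf{Averaging against two invariant measures.} Let $\eta_1, \eta_2$ be two invariant probability measures for $\{\P_t\}_{t\geq 0}$. Invariance gives $\int \varphi\, d\eta_i = \int \P_t\varphi\, d\eta_i$ for every $t\geq0$ and $i=1,2$, so by Fubini,
\begin{align*}
\left|\int \varphi\, d\eta_1 - \int \varphi\, d\eta_2\right| \leq \|\D\varphi\|_{\infty}\, e^{-\kappa t/2}\!\! \int_{\H}\!\!\int_{\H} \|\x-\y\|_{\H}\, e^{(\|\x\|_{\H}^2 + \|\y\|_{\H}^2)/\mu^2}\, \eta_1(d\x)\eta_2(d\y).
\end{align*}
Condition \eqref{419} rewrites as $\tfrac{1}{\mu^2} < \tfrac{1}{2}\cdot\tfrac{2\alpha+\mu\uplambda_1}{2\Tr(\Q)}$, leaving room to pick some $\upsigma > \tfrac{1}{\mu^2}$ admissible in Lemma \ref{lem4.1}; the domination $\|\x\|_{\H}\, e^{\|\x\|_{\H}^2/\mu^2} \leq C\, e^{\upsigma\|\x\|_{\H}^2}$ then factorizes the double integral into finite moments of $\eta_1$ and $\eta_2$. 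Letting $t\to\infty$ forces $\int \varphi\, d\eta_1 = \int \varphi\, d\eta_2$ for all $\varphi \in \C_b^1(\H)$; since this class is measure-determining on the separable Hilbert space $\H$, we conclude $\eta_1 = \eta_2$.

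\textbf{Main obstacle and a cleaner route.} The delicate point is the rigorous justification of the chain rule at the level of generality of $\varphi \in \C_b^1(\H)$, since it requires controlling $\D_{\x}\X(t,\x)$ as a random bounded operator on $\H$, whereas only the $\H$-valued estimate on $\boldsymbol{\xi}^{\boldsymbol{h}}$ of Lemma \ref{lem4.4} is readily available. A clean way to sidestep differentiation of $\P_t\varphi$ is to note that $s \mapsto \X(t, \z_s)$ is pathwise $\H$-absolutely continuous with derivative $\boldsymbol{\xi}^{\y-\x}(t,\z_s)$; integrating in $s$, taking the $\H$-norm, and applying Jensen in $\omega$ together with Lemma \ref{lem4.4} yields the Wasserstein-$1$ contraction
\begin{align*}
\E\bigl[\|\X(t,\x) - \X(t,\y)\|_{\H}\bigr] \leq \|\x-\y\|_{\H}\, e^{-\kappa t/2}\, e^{(\|\x\|_{\H}^2+\|\y\|_{\H}^2)/\mu^2},
\end{align*}
which is enough to run the averaging argument above on the measure-determining class of bounded Lipschitz functions, with $\|\D\varphi\|_{\infty}$ replaced by the Lipschitz seminorm.
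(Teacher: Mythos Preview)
Your proof is correct and follows essentially the same route as the paper: both integrate the first-variation estimate of Lemma \ref{lem4.4} along the segment between two initial data to obtain an exponentially decaying Lipschitz bound on $\P_t\varphi$, then invoke the exponential moments of Lemma \ref{lem4.1} (applied to each candidate invariant measure) before letting $t\to\infty$. The paper phrases the averaging step asymmetrically---fixing $\x$, showing $\lim_{t\to\infty}\P_t\psi(\x)=\int\psi\,d\tilde\eta$, then integrating against $\eta$---whereas you integrate symmetrically over $\eta_1\otimes\eta_2$; your explicit check that $\tfrac{1}{\mu^2}$ lies strictly below the threshold in \eqref{improve}, and your alternative Wasserstein route bypassing the chain rule, are refinements the paper does not spell out.
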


\begin{proof}
Let us assume that $\eta$ and $\tilde\eta$ are two invariant measures for $\P_t$ and the estimates obtained in Lemmas \ref{lem4.3} and \ref{lem4.4} hold true. Using the invariance, we observe that 
	\begin{align}\label{4.29}
	\left|(\P_t\psi)(\x)-\int_{\H}\psi(\x_1)\tilde\eta(\d\x_1)\right|\leq\int_{\H}|(\P_t\psi)(\x)-(\P_t\psi)(\x_1)|
	\tilde\eta(\d\x_1). 
	\end{align}
	Furthermore, by using Lemma \ref{lem4.4},  for all $\psi\in \C_b^1(\H)$, we have 
	\begin{align}\label{4.30}
	|(\P_t\psi)(\x)-(\P_t\psi)(\x_1)|&\leq\int_0^1|\D_{\x}(\P_t\psi)(\theta\x+(1-\theta)\x_1)\cdot(\x-\x_1)|\d\theta\nonumber\\&\leq\int_0^1\|\psi\|_{1}\E\left(\|\boldsymbol{\xi}^{\x-\x_1}(t,\theta\x+(1-\theta)\x_1)\|_{\H}\right)\d\theta\nonumber\\&\leq\|\psi\|_1 \int_0^1\left\{\E\left(\|\boldsymbol{\xi}^{\x-\x_1}(t,\theta\x+(1-\theta)\x_1)\|_{\H}^2\right)\right\}^{1/2}\d\theta\nonumber\\&\leq\|\psi\|_1 \left(\|\x\|_{\H}+\|\x_1\|_{\H}\right)e^{\frac{2}{\mu^2}(\|\x\|_{\H}^2+\|\x_1\|_{\H}^2)}e^{-\frac{1}{2}\left(\mu\uplambda_1+2\alpha-\frac{2}{\mu^2}\Tr(\Q)\right)t},
	\end{align}
	where we used H\"older's inequlaity and \eqref{4p24}. Using \eqref{4.30} in \eqref{4.29}, we obtain  
	\begin{align}\label{431}
	&	\left|(\P_t\psi)(\x)-\int_{\H}\psi(\x_1)\tilde\eta(\d\x_1)\right|\nonumber\\&\leq \|\psi\|_1e^{-\frac{1}{2}\left(\mu\uplambda_1+2\alpha-\frac{2}{\mu^2}\Tr(\Q)\right)t}e^{\frac{2}{\mu^2}\|\x\|_{\H}^2} \int_{\H}\left(\|\x\|_{\H}+\|\x_1\|_{\H}\right) e^{\frac{2}{\mu^2}\|\x_1\|_{\H}^2}\tilde\eta(\d\x_1)
	\nonumber\\&\leq\|\psi\|_1e^{-\frac{1}{2}\left(\mu\uplambda_1+2\alpha-\frac{2}{\mu^2}\Tr(\Q)\right)t}e^{\frac{2}{\mu^2}\|\x\|_{\H}^2}\bigg[\|\x\|_{\H}\int_{\H}e^{\frac{2}{\mu^2}\|\x_1\|_{\H}^2}\tilde\eta(\d\x_1)+\frac{1}{2}\int_{\H}\|\x_1\|_{\V}^2 e^{\frac{2}{\mu^2}\|\x_1\|_{\H}^2}\eta(\d\x_1)\nonumber\\&\quad+\frac{1}{2\uplambda_1}\int_{\H}e^{\frac{2}{\mu^2}\|\x_1\|_{\H}^2}\eta(\d\x_1)\bigg],
	\end{align}
where  the last term is obtained by using Poincar\'e's, H\"older's and Young's inequalities. Finally, using  Lemma \ref{lem4.1}, we conclude from \eqref{431} that
\begin{align*}
	\left|(\P_t\psi)(\x)-\int_{\H}\psi(\x_1)\tilde\eta(\d\x_1)\right|&\leq\|\psi\|_1e^{-\frac{1}{2}\left(\mu\uplambda_1+2\alpha-\frac{2}{\mu^2}\Tr(\Q)\right)t} e^{\frac{2}{\mu^2}\|\x\|_{\H}^2}\nonumber\\&\quad\times \left(2\|\x\|_{\H}+\frac{1}{\uplambda_1}+\frac{\Tr(\Q)}{\mu}\right)e^{\frac{4\Tr(\Q)}{\mu^3\uplambda_1}},
\end{align*}
	provided the condition \eqref{419} is satisfied. Taking $t\to\infty$ in \eqref{431}, we get 
	\begin{align}
	\lim_{t\to\infty}(\P_t\psi)(\x)=\int_{\H}\psi(\x_1)\tilde\eta(\d\x_1), \ \text{ for all }\ \x\in\H. 
	\end{align}
	Consequently taking $t\to\infty$ in the identity $\int_{\H}(\P_t\psi)(\x)\eta(\d\x)=\int_{\H}\psi(\x)\eta(\d\x),$ yields  $$\int_{\H}\psi(\x_1)\tilde\eta(\d\x_1)=\int_{\H}\psi(\x_1)\eta(\d\x_1)\ \text{ for all }\ \psi\in\C_b^1(\H),$$ and hence we obtain $\eta=\tilde\eta$. 
\end{proof}

\section{Some Auxiliary Results}\label{secappx}\setcounter{equation}{0}
One of the main aims of the upcoming sections is to examine the infinitesimal generator $\mathcal{N}_2$ of $\{\P_t\}_{t\geq 0}$. Once again, we  consider  the Kolmogorov operator \eqref{4p5}. Applying It\^o's formula, it follows that $\mathcal{N}_2\psi=\mathcal{N}_0\psi$, for all $\psi\in\mathscr{E}_{\A}(\H)$ (see Step-2 of Theorem \ref{thm4.7}). Our main goal is to show that $\mathcal{N}_2$ is the closure of $\mathcal{N}_0$ or, equivalently, that $\mathscr{E}_{\A}(\H)$ is the core of $\mathcal{N}_2$.  Then, we say that  $\mathcal{N}_0$ is \emph{essentially $m$-dissipative} (see \cite{VBGA}). The important consequence of this result is the existence and uniqueness of a strong solution (in the sense of Friedrichs) of the elliptic problem
\begin{align*}
	\lambda\psi-\mathcal{N}_0\psi=\g,
\end{align*}
where $\lambda>0$ and $\g\in\L^2(\H;\eta)$ are given. That is, for any $\lambda>0$ and any $\g\in\L^2(\H;\eta)$, there exists a sequence $\{\psi_n\}_{n\in\N}\subset\mathscr{E}_{\A}(\H)$ such that (see \cite[pp. 75]{EANK})
\begin{align*}
	\psi_n\to\psi, \  \lambda\psi_n-\mathcal{N}_0\psi_n\to\g, \ \text{ as } \ n\to\infty \  \text{ in } \ \L^2(\H;\eta),
\end{align*} 
 In order to do this, we need to find a bound for the  following interpolation estimate: $$\int_{\H}\|\A^{\delta}\x\|_{\H}^{2m-2}\|\A^{\delta+\frac{1}{2}}\x\|_{\H}^2\eta(\d\x),$$ where $\delta>0$ and $m\in\mathbb{N}$, which is obtained by considering an approximated problem. 
 \subsection{An approximated problem}
 We first approximate the system \eqref{32} by the following regular system: 
\begin{equation}\label{4.33}
\left\{
\begin{aligned}
\d\X_{\e}(t)+[\mu \A\X_{\e}(t)+\alpha\X_{\e}(t)+\B_{\e}(\X_{\e}(t))+\beta\mathcal{C}_{\e}(\X_{\e}(t))]\d t&=\sqrt{\Q}\d\W(t), \ t>0,\\
\X_{\e}(0)&=\x\in\H,
\end{aligned}
\right.
\end{equation}
where 
\begin{align}\label{modiB}
\B_{\e}(\x)=\left\{\begin{array}{cc}\B(\x)&\text{ if }\ \|\x\|_{\V}\leq\e^{-1},\\
\e^{-2}\|\x\|_{\V}^{-2}\B(\x)&\text{ if }\ \|\x\|_{\V}>\e^{-1},\end{array}\right.
\end{align}
and 
\begin{align}\label{modiC}
	\mathcal{C}_{\e}(\x)=\left\{\begin{array}{cc}\mathcal{C}(\x)&\text{ if }\ \|\x\|_{\V}\leq\e^{-1},\\
		\e^{-(r+1)}\|\x\|_{\V}^{-(r+1)}\mathcal{C}(\x)&\text{ if }\ \|\x\|_{\V}>\e^{-1}.\end{array}\right.
\end{align}
Since, for $\e>0$, $\B_{\e}(\cdot)$ and $\mathcal{C}_{\e}(\cdot)$ are regular and bounded, the problem \eqref{4.33} possesses a unique strong solution, which  shall be denoted by $\X_{\e}(t,\x)$. For $\e>0$, let $\{\P_t^{\e}\}_{t\geq0}$ be the transition semigroup associated with \eqref{4.33}, which is given by
\begin{align*}
	\P_t^\e\psi(\x)=\E[\psi(\X_{\e}(t,\x))], \ \psi\in\C_b(\H).
\end{align*}  It should be noted that the estimate \eqref{tight} remains true for the system \eqref{4.33} also. This implies that there exists an invariant measure $\eta_{\e}$ of $\{\P_t^{\e}\}_{t\geq 0}$ and the sequence $\{\eta_{\e}\}_{\e>0}$ is tight. 
 Hence, by the uniqueness, the invariant measure $\eta$ for $\{\P_t\}_{t\geq0}$ is the weak limit of $\{\eta_{\e}\}_{\e>0}$. Furthermore, we note that Lemma \ref{lem4.1} remains valid for the semigroup $\{\P_t^{\e}\}_{\e>0}$ with the constants independent of $\e$. In the sequel, we need the following interpolation estimates for $\delta\in(0,\frac{1}{2})$ (cf. \cite{VBGD}): 
\begin{align}
\|\x\|_{\wi\L^{\infty}}&\leq C_{a}\|\A^{\delta}\x\|_{\H}^{2\delta}\|\A^{\delta+\frac{1}{2}}\x\|_{\H}^{1-2\delta}, \ \x\in\D(\A^{\delta+\frac{1}{2}}), \label{435}\\
\|\x\|_{\V}&\leq \|\A^{\delta}\x\|_{\H}^{2\delta}\|\A^{\delta+\frac{1}{2}}\x\|_{\H}^{1-2\delta}, \ \x\in\D(\A^{\delta+\frac{1}{2}}), \label{436}\\
\|\A^{2\delta}\x\|_{\H}&\leq \|\A^{\delta}\x\|_{\H}^{1-2\delta}\|\A^{\delta+\frac{1}{2}}\x\|_{\H}^{2\delta}, \ \x\in\D(\A^{\delta+\frac{1}{2}}),\label{437}
\end{align}
The first inequality follows from the fact that $\D(\A^{\alpha})$ is a closed subspace of $\H^{2\alpha}(\mathcal{O})$ and Agmon's inequality. The last two inequalities follow from the application of interpolation inequality.

\begin{lemma}\label{lem4.5}
	Let us assume that
	 \begin{align}\label{Asmp}
	 	\Tr(\A^{2\delta}\Q)<+\infty,
	 \end{align} 
	 for some $\delta\in(0,\frac{1}{2})$. Then, there are some positive constants $\gamma_i$, for $i=1,2$ depending on $m$ such that if 
	\begin{align}\label{439}
	\mu(\mu+\alpha)^2>\gamma_1\max\{4\Tr(\Q),\Tr(\A^{2\delta}\Q)\},
	\end{align}
then the following estimate holds for all $\eps>0$:
\begin{align*}
	&\int_{\H}e^{\lambda\|\x\|_{\H}^2}\|\A^{\delta}\x\|_{\H}^{2m}\left(\|\x\|_{\V}^2+\|\x\|_{\H}^2+\|\x\|_{\wi\L^{r+1}}^{r+1}\right)\eta_{\e}(\d\x)\nonumber\\&+ \int_{\H}e^{\lambda\|\x\|_{\H}^2}\|\A^{\delta}\x\|_{\H}^{2(m-1)} \|\A^{\delta+\frac{1}{2}}\x\|_{\H}^2\eta_{\e}(\d\x)\leq\gamma_2,
\end{align*}
for all $m\in\N$ and 
\begin{align*}
	0\leq\lambda<\gamma_3\max\{4\Tr(\Q),\Tr(\A^{2\delta}\Q)\}. 
\end{align*}.
\end{lemma}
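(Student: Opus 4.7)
The plan is to apply the generator $\mathcal{N}_\e$ of the regularized transition semigroup $\{\P_t^\e\}_{t\geq 0}$ to the Lyapunov functional
\begin{align*}
\Phi(\x):=e^{\lambda\|\x\|_{\H}^2}\,\|\A^\delta\x\|_{\H}^{2m},\qquad\x\in\D(\A^\delta),
\end{align*}
and to exploit the invariance identity $\int_{\H}\mathcal{N}_\e\Phi(\x)\,\eta_\e(\d\x)=0$. Since $\B_\e$ and $\mathcal{C}_\e$ are smooth and bounded on $\H$ thanks to \eqref{modiB}-\eqref{modiC}, the solution $\X_\e(\cdot,\x)$ is sufficiently regular for It\^o's formula to be applied to $\Phi(\X_\e(t,\x))$, and the stationarity of $\eta_\e$ converts the resulting expected identity into an integral identity over $\H$. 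If $\Phi$ itself lies outside the formal domain one first truncates, e.g.\ by $\Phi_N(\x):=e^{\lambda\|\x\|_{\H}^2}\bigl(\|\A^\delta\x\|_{\H}^2\wedge N\bigr)^m$, derives the bound for $\Phi_N$ uniformly in $N$, and passes to the limit by Fatou's lemma.

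A direct computation gives
\begin{align*}
\D_\x\Phi(\x)=2\,e^{\lambda\|\x\|_{\H}^2}\|\A^\delta\x\|_{\H}^{2(m-1)}\bigl(\lambda\|\A^\delta\x\|_{\H}^2\,\x+m\A^{2\delta}\x\bigr),
\end{align*}
with an analogous but longer expression for $\D_\x^2\Phi(\x)$. Pairing $-\D_\x\Phi$ against the dissipative part of the drift, $\mu\A\x+\alpha\x+\beta\mathcal{C}_\e(\x)$, produces, up to positive constants, the four \emph{good} integrands
\begin{align*}
e^{\lambda\|\x\|_{\H}^2}\|\A^\delta\x\|_{\H}^{2m}\|\x\|_{\V}^2,\quad e^{\lambda\|\x\|_{\H}^2}\|\A^\delta\x\|_{\H}^{2(m-1)}\|\A^{\delta+\frac12}\x\|_{\H}^2,\quad e^{\lambda\|\x\|_{\H}^2}\|\A^\delta\x\|_{\H}^{2m}\|\x\|_{\H}^2,
\end{align*}
together with $e^{\lambda\|\x\|_{\H}^2}\|\A^\delta\x\|_{\H}^{2m}\|\x\|_{\wi\L^{r+1}}^{r+1}$; these are exactly the terms appearing on the left-hand side of the claim. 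The trace contribution $\tfrac12\Tr[\Q\D_\x^2\Phi(\x)]$ produces residual factors involving $\Tr(\Q)$ and $\Tr(\A^{2\delta}\Q)$ (finite by \eqref{Asmp}) together with $\|\Q^{1/2}\x\|_{\H}^2\leq\Tr(\Q)\|\x\|_{\H}^2$ (cf.\ \eqref{trop1}); these are integrable against $\eta_\e$ by the exponential-moment bounds of Lemma \ref{lem4.1}, which hold uniformly in $\e$ because their proof uses only invariance and the energy identity, both preserved by \eqref{modiB}-\eqref{modiC}.

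The key nonlinear contribution comes from $-(\B_\e(\x),\D_\x\Phi(\x))$, which, using $(\B_\e(\x),\x)=0$, reduces to a multiple of $\|\A^\delta\x\|_{\H}^{2(m-1)}(\B_\e(\x),\A^{2\delta}\x)$. By the interpolation estimates \eqref{435}-\eqref{437},
\begin{align*}
|(\B_\e(\x),\A^{2\delta}\x)|\leq\|\x\|_{\wi\L^\infty}\|\x\|_{\V}\|\A^{2\delta}\x\|_{\H}\leq C\|\A^\delta\x\|_{\H}^{1+2\delta}\|\A^{\delta+\frac12}\x\|_{\H}^{2-2\delta},
\end{align*}
and an entirely analogous bound for $(\mathcal{C}_\e(\x),\A^{2\delta}\x)$ obtained from $\||\x|^{r-1}\x\|_{\H}\leq\|\x\|_{\wi\L^\infty}^{r-1}\|\x\|_{\H}$, which is valid because $r\in\{1,2,3\}$ in two dimensions. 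In every case the exponent of $\|\A^{\delta+1/2}\x\|_{\H}$ is strictly less than $2$, so Young's inequality permits us to absorb a small multiple of $e^{\lambda\|\x\|_{\H}^2}\|\A^\delta\x\|_{\H}^{2(m-1)}\|\A^{\delta+\frac12}\x\|_{\H}^2$ into the second good integrand, at the price of a higher power of $\|\A^\delta\x\|_{\H}$ that is in turn absorbed by the first and third good integrands. The absorption constants generated here are exactly what forces the quantitative restriction \eqref{439} on $\mu(\mu+\alpha)^2$, while the smallness of $\lambda$ (below $\gamma_3\max\{4\Tr(\Q),\Tr(\A^{2\delta}\Q)\}$) is precisely what is needed to absorb the residual $\lambda$-terms coming from $\tfrac12\Tr[\Q\D_\x^2\Phi]$.

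The main obstacle I anticipate is the careful bookkeeping across the three Forchheimer exponents $r\in\{1,2,3\}$: one must check that every polynomial factor in $\|\A^\delta\x\|_{\H}$ produced by Young's inequality on $(\mathcal{C}_\e(\x),\A^{2\delta}\x)$ and $(\mathcal{C}_\e(\x),\x)$ is dominated either by one of the good dissipative integrals or by the exponential-moment bounds of Lemma \ref{lem4.1}, in which the exponent $\lambda$ must be kept small enough throughout. Once the absorptions are carried out, and the truncation $\Phi_N$ is removed by monotone convergence, rearranging yields the stated inequality with a constant $\gamma_2$ depending only on $m,\mu,\alpha,\beta,\Tr(\Q)$ and $\Tr(\A^{2\delta}\Q)$, and in particular independent of $\e>0$.
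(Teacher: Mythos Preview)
Your overall strategy---apply the approximated Kolmogorov operator to $\Phi(\x)=e^{\lambda\|\x\|_\H^2}\|\A^\delta\x\|_\H^{2m}$ and use $\int_\H\mathcal{N}_\e\Phi\,\eta_\e(\d\x)=0$---is exactly what the paper does. The gap is in your treatment of the convection term.

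Your fully interpolated bound $|(\B_\e(\x),\A^{2\delta}\x)|\leq C\|\A^\delta\x\|_\H^{1+2\delta}\|\A^{\delta+1/2}\x\|_\H^{2-2\delta}$ is correct, but after multiplying by $2m\|\A^\delta\x\|_\H^{2(m-1)}$ and applying Young's inequality with exponents $(\tfrac{1}{1-\delta},\tfrac{1}{\delta})$ to isolate $\|\A^\delta\x\|_\H^{2(m-1)}\|\A^{\delta+1/2}\x\|_\H^2$, the residual is proportional to $\|\A^\delta\x\|_\H^{2m+1/\delta}$. Since $1/\delta>2$, this is a \emph{strictly higher} pure power of $\|\A^\delta\x\|_\H$ than any dissipative integrand supplies: your ``first and third good integrands'' carry extra factors $\|\x\|_\V^2$ and $\|\x\|_\H^2$ that the residual lacks, and the remaining good term $m\alpha\|\A^\delta\x\|_\H^{2m}$ (from $-(\alpha\x,2m\|\A^\delta\x\|_\H^{2(m-1)}\A^{2\delta}\x)$) has too low an exponent. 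The absorption you describe does not close.

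The paper's fix is to interpolate only two of the three factors, keeping $\|\x\|_\V$ intact: combining \eqref{435} and \eqref{437} gives
\[
|(\B(\x),\A^{2\delta}\x)|\leq\|\x\|_{\wi\L^\infty}\|\x\|_\V\|\A^{2\delta}\x\|_\H\leq C_a\,\|\x\|_\V\,\|\A^\delta\x\|_\H\,\|\A^{\delta+1/2}\x\|_\H.
\]
Young's inequality then splits $2m\|\A^\delta\x\|_\H^{2(m-1)}$ times this into $\lambda\mu\|\A^\delta\x\|_\H^{2m}\|\x\|_\V^2$ (absorbed by the first good integral) and $\tfrac{C_a^2 m^2}{\mu\lambda}\|\A^\delta\x\|_\H^{2(m-1)}\|\A^{\delta+1/2}\x\|_\H^2$ (absorbed by the second). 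The analogous treatment of $(\mathcal{C}(\x),\A^{2\delta}\x)$ for $r=2,3$ works the same way. The second absorption succeeds only if $\lambda$ is \emph{large enough}, roughly $\lambda>\tfrac{2m}{\mu(\mu+\alpha)}(C_a^2+\beta^2)$; together with the \emph{upper} bound on $\lambda$ coming from the trace and exponential-moment terms, one needs a nonempty interval for $\lambda$, and that is precisely the origin of \eqref{439}. Your sketch speaks only of $\lambda$ being small, so it misses the lower-bound half of this mechanism and hence the true source of the constraint on $\mu(\mu+\alpha)^2$.
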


\begin{proof}
	Let us set 
	\begin{align}
	\psi(\x)=e^{\upnu\|\x\|_{\H}^2}\|\A^{\delta}\x\|_{\H}^{2m}, \ \text{ for all }\ \x\in\D(\A^{\delta}). 
	\end{align}
Then, it can be easily seen that (see \cite{VBGD} also) 
\begin{align}
	\D_{\x}\psi(\x)&=e^{\upnu\|\x\|_{\H}^2}[2\upnu\|\A^{\delta}\x\|_{\H}^{2m}\x+2m\|\A^{\delta}\x\|_{\H}^{2(m-1)}\A^{2\delta}\x]\label{est1},\\
	\D_{\x}^2\psi(\x)&=e^{\upnu\|\x\|_{\H}^2}[4\upnu^2\|\A^{\delta}\x\|_{\H}^{2m}\x\otimes\x+2\upnu\|\A^{\delta}\x\|_{\H}^{2m}\nonumber\\&\quad+4\upnu m\|\A^{\delta}\x\|_{\H}^{2(m-1)}(\A^{2\delta}\x\otimes\x+\x\otimes\A^{2\delta}\x)\nonumber\\&\quad+4m(m-1)\|\A^{\delta}\x\|_{\H}^{2(m-2)}\A^{2\delta}\x\otimes\A^{2\delta}\x+2m\|\A^{\delta}\x\|_{\H}^{2(m-1)}\A^{2\delta}]\label{est2},
\end{align}
and 
\begin{align*}
	\Tr[\Q\D_{\x}^2\psi(\x)]&=e^{\upnu\|\x\|_{\H}^2}[4\upnu^2\|\A^{\delta}\x\|_{\H}^{2m}\|\Q^{1/2}\x\|_{\H}^2++2\upnu\|\A^{\delta}\x\|_{\H}^{2m}\Tr(\Q)\nonumber\\&\quad+8\upnu m\|\A^{\delta}\x\|_{\H}^{2(m-1)}(\A^{2\delta}\x,\Q\x)+4m(m-1)\|\A^{\delta}\x\|_{\H}^{2(m-2)}\|\Q^{1/2}\A^{2\delta}\x\|_{\H}^2\nonumber\\&\quad+2m\|\A^{\delta}\x\|_{\H}^{2(m-1)}\Tr(\A^{2\delta}\Q)]. 
\end{align*}
Thus, it follows from the above equality and \eqref{trop} that 
	\begin{align}\label{est3}
	\Tr[\Q\D_{\x}^2\psi(\x)]&\leq e^{\upnu\|\x\|_{\H}^2} \Big[4\upnu^2\Tr(\Q)\|\A^{\delta}\x\|_{\H}^{2m}\|\x\|_{\H}^2+2\upnu\|\A^{\delta}\x\|_{\H}^{2m}\Tr(\Q)\nonumber\\&\quad+
	8\upnu m\Tr(\Q)\|\A^{\delta}\x\|_{\H}^{2(m-1)} \|\A^{2\delta}\x\|_{\H}\|\x\|_{\H}\nonumber\\&\quad+4m(m-1)\Tr(\Q)\|\A^{\delta}\x\|_{\H}^{2(m-2)}\|\A^{2\delta}\x\|_{\H}^2+2m\|\A^{\delta}\x\|_{\H}^{2(m-1)}\Tr(\A^{2\delta}\Q)\Big]\nonumber\\&\leq2\max\{4\Tr(\Q),\Tr(\A^{2\delta}\Q)\}e^{\upnu\|\x\|_{\H}^2}\|\A^{\delta}\x\|_{\H}^{2m-4}
	\bigg[\upnu^2\|\A^{\delta}\x\|_{\H}^4\|\x\|_{\H}^2+\upnu\|\A^{\delta}\x\|_{\H}^4\nonumber\\&\quad+\upnu m \|\A^{\delta}\x\|_{\H}^2\|\A^{2\delta}\x\|_{\H}\|\x\|_{\H}+m(m-1)\|\A^{2\delta}\x\|_{\H}^2+m\|\A^{\delta}\x\|_{\H}^2\bigg].
	\end{align}
	Using \eqref{est1}, we find
	\begin{align}\label{est4}
&(\mu\A\x+\alpha\x+\B(\x)+\beta\mathcal{C}(\x),\D_{\x}\psi(\x))\nonumber\\&
 =2\upnu e^{\upnu\|\x\|_{\H}^2}\|\A^{\delta}\x\|_{\H}^{2m} (\mu\|\x\|_{\V}^2+\alpha\|\x\|_{\H}^2+\beta\|\x\|_{\wi\L^{r+1}}^{r+1})\nonumber\\&\quad+2m e^{\upnu\|\x\|_{\H}^2}\|\A^{\delta}\x\|_{\H}^{2(m-1)}(\mu\|\A^{\delta+\frac{1}{2}}\x\|_{\H}^2+\alpha\|\A^{\delta}\x\|_{\H}^2)\nonumber\\&\quad+ 2m e^{\upnu\|\x\|_{\H}^2}\|\A^{\delta}\x\|_{\H}^{2(m-1)}\left[\underbrace{(\B(\x),\A^{2\delta}\x)}_{=:I_1}+\beta\underbrace{(\mathcal{C}(\x),\A^{2\delta}\x)}_{=:I_2}\right].
	\end{align}
Let  $\mathcal{N}_2^{\e}$ be the infinitesimal generator of the transition semigroup $\{\P_t^{\e}\}_{t\geq 0}$ in $\L^2(\H,\eta_\e)$.	Substituting \eqref{est3} and \eqref{est4} into the equation $$\int_{\H} \mathcal{N}_2^{\e}\psi(\x)\eta_{\e}(\d\x)=0,$$  we obtain 
	\begin{align}\label{442}
	&2\upnu\int_{\H}e^{\upnu\|\x\|_{\H}^2}\left(\|\A^{\delta}\x\|_{\H}^{2m} (\mu\|\x\|_{\V}^2+\alpha\|\x\|_{\H}^2+\beta\|\x\|_{\wi\L^{r+1}}^{r+1})\right)\eta_{\e}(\d\x)\nonumber\\&\quad +2m	\int_{\H}e^{\upnu\|\x\|_{\H}^2}\left(\|\A^{\delta}\x\|_{\H}^{2(m-1)}(\mu\|\A^{\delta+\frac{1}{2}}\x\|_{\H}^2+\alpha\|\A^{\delta}\x\|_{\H}^2)\right)\eta_{\e}(\d\x)\nonumber\\&=-2m \int_{\H}e^{\upnu\|\x\|_{\H}^2}\|\A^{\delta}\x\|_{\H}^{2(m-1)}
	\left[I_1+\beta I_2\right] \eta_{\e}(\d\x)+ \frac{1}{2}\int_{\H}\Tr\left[\Q\D_{\x}^2\psi(\x)\right]\eta_{\e}(\d\x)\nonumber\\&\leq -2m \int_{\H}e^{\upnu\|\x\|_{\H}^2}\|\A^{\delta}\x\|_{\H}^{2(m-1)}
	\left[I_1+\beta I_2\right] \eta_{\e}(\d\x)\nonumber\\&\quad+
	\max\{4\Tr(\Q),\Tr(\A^{2\delta}\Q)\}\int_{\H}e^{\upnu\|\x\|_{\H}^2}\|\A^{\delta}\x\|_{\H}^{2m-4}\left(\upnu^2\|\A^{\delta}\x\|_{\H}^{4}\|\x\|_{\H}^2+\upnu\|\A^{\delta}\x\|_{\H}^{4}\right.\nonumber\\&\qquad\left.+\upnu m\|\A^{\delta}\x\|_{\H}^2 \|\A^{2\delta}\x\|_{\H}\|\x\|_{\H}+m(m-1)\|\A^{2\delta}\x\|_{\H}^2+m\|\A^{\delta}\x\|_{\H}^{2}\right)\eta_{\e}(\d\x). 
	\end{align}
	Let us denote  
	\begin{align}\label{sigma}
		\sigma(\Q,\delta):=\max\{4\Tr(\Q),\Tr(\A^{2\delta}\Q)\}.
		\end{align}
	Using \eqref{435}, \eqref{437}, H\"older's and Young's inequalities, we estimate $I_1$ as 
	\begin{align}\label{443}
|I_1|\leq \|\x\|_{\wi\L^{\infty}}\|\x\|_{\V}\|\A^{2\delta}\x\|_{\H}&\leq
C_a\|\x\|_{\V}\|\A^{\delta}\x\|_{\H}\|\A^{\delta+\frac{1}{2}}\x\|_{\H}\nonumber\\&\leq\frac{\upnu\mu}{2m}\|\x\|_{\V}^2\|\A^{\delta}\x\|_{\H}^2+\frac{C_a^2m}{2\mu\upnu}\|\A^{\delta+\frac{1}{2}}\x\|_{\H}^2. 
	\end{align}
In order to estimate $(\mathcal{C}(\x),\A^{2\delta}\x)$, we consider the cases $r=2,3$ separately. 	For $r=2$, we estimate $(\mathcal{C}(\x),\A^{2\delta}\x)$, using \eqref{436}-\eqref{437}, Ladyzhenskaya's, H\"older's and Young's inequalities as 
	\begin{align}\label{444}
	|I_2|\leq\|\x\|_{\wi\L^4}^2\|\A^{2\delta}\x\|_{\H}\leq\sqrt{2}\|\x\|_{\H}\|\x\|_{\V}\|\A^{2\delta}\x\|_{\H}&\leq \sqrt{2}\|\x\|_{\H}\|\A^{\delta}\x\|_{\H}\|\A^{\delta+\frac{1}{2}}\x\|_{\H}\nonumber\\&\leq\frac{\upnu\alpha}{2m\beta}
	\|\x\|_{\H}^2\|\A^{\delta}\x\|_{\H}^2+\frac{m\beta}{2\mu\upnu} \|\A^{\delta+\frac{1}{2}}\x\|_{\H}^2.   
	\end{align}
	For $r=3$, the term $(\mathcal{C}(\x),\A^{2\delta}\x)$ can be estimated using  \eqref{435}, \eqref{437},  interpolation, H\"older's and Young's inequalities as  
	\begin{align}\label{445}
   |I_2|\leq\|\x\|_{\wi\L^{\infty}}\|\x\|_{\wi\L^4}^2\|\A^{2\delta}\x\|_{\H}
   &\leq C_a\|\x\|_{\wi\L^4}^2\|\A^{\delta}\x\|_{\H}\|\A^{\delta+\frac{1}{2}}\x\|_{\H}\nonumber\\&\leq\frac{\upnu}{2m}\|\x\|_{\wi\L^4}^4\|\A^{\delta}\x\|_{\H}^2+\frac{C_a^2m}{\upnu}\|\A^{\delta+\frac{1}{2}}\x\|_{\H}^2. 
	\end{align}
	Thus, for $r=2$, combining \eqref{443} and \eqref{444},  and then substituting it in \eqref{442}, we obtain 
	\begin{align}\label{et0}
	&\upnu\int_{\H}e^{\upnu\|\x\|_{\H}^2}\|\A^{\delta}\x\|_{\H}^{2m}\left(\mu\|\x\|_{\V}^2+\alpha\|\x\|_{\H}^2+2\beta\|\x\|_{\wi\L^{r+1}}^{r+1}\right)\eta_{\e}(\d\x) \nonumber\\&\quad+2m	\int_{\H}e^{\upnu\|\x\|_{\H}^2}\|\A^{\delta}\x\|_{\H}^{2(m-1)}\left(\mu\|\A^{\delta+\frac{1}{2}}\x\|_{\H}^2+\alpha\|\A^{\delta}\x\|_{\H}^2\right)\eta_{\e}(\d\x)\nonumber\\&\leq \frac{m^2}{\mu\upnu}(C_a^2+\beta^2)\int_{\H}e^{\upnu\|\x\|_{\H}^2}\|\A^{\delta}\x\|_{\H}^{2m-2}\|\A^{\delta+\frac{1}{2}}\x\|_{\H}^2\eta_{\e}(\d\x) \nonumber\\&\quad+\sigma(\Q,\delta)\int_{\H}e^{\upnu\|\x\|_{\H}^2}\|\A^{\delta}\x\|_{\H}^{2m-4}\left(\upnu^2\|\A^{\delta}\x\|_{\H}^{4}\|\x\|_{\H}^2+\upnu\|\A^{\delta}\x\|_{\H}^{4}\right.\nonumber\\&\qquad\left.+\upnu m\|\A^{\delta}\x\|_{\H}^2\|\A^{2\delta}\x\|_{\H}\|\x\|_{\H}+m(m-1)\|\A^{2\delta}\x\|_{\H}^2+m\|\A^{\delta}\x\|_{\H}^{2}\right)\eta_{\e}(\d\x)
	\nonumber\\&\leq\frac{m^2}{\mu\upnu}(C_a^2+\beta^2)
	\int_{\H}e^{\upnu\|\x\|_{\H}^2}\|\A^{\delta}\x\|_{\H}^{2m-2}
	\|\A^{\delta+\frac{1}{2}}\x\|_{\H}^2\eta_{\e}(\d\x)\nonumber\\&\quad+
	\upnu^2\sigma(\Q,\delta)\underbrace{\int_{\H}e^{\upnu\|\x\|_{\H}^2}\|\A^{\delta}\x\|_{\H}^{2m}\|\x\|_{\H}^2\eta_{\e}(\d\x)}_{=:I_{22}}+\upnu\sigma(\Q,\delta)\int_{\H}e^{\upnu\|\x\|_{\H}^2}\|\A^{\delta}\x\|_{\H}^{2m}\eta_{\e}(\d\x)\nonumber\\&\quad
	+m\sigma (\Q,\delta)\bigg[\upnu \underbrace{\int_{\H}e^{\upnu\|\x\|_{\H}^2}\|\A^{\delta}\x\|_{\H}^{2m-2}\|\A^{2\delta}\x\|_{\H}\|\x\|_{\H}\eta_{\e}(\d\x)}_{=:I_3}
	\nonumber\\&\quad+m
	\underbrace{\int_{\H}e^{\upnu\|\x\|_{\H}^2}\|\A^{\delta}\x\|_{\H}^{2m-4}\|\A^{2\delta}\x\|_{\H}^2\eta_{\e}(\d\x)}_{=:I_4}+
	\underbrace{\int_{\H}e^{\upnu\|\x\|_{\H}^2}\|\A^{\delta}\x\|_{\H}^{2m-2}\eta_{\e}(\d\x)}_{=:I_5}\bigg],
	\end{align} 
   where $\sigma (\Q,\delta)$  is defined in \eqref{sigma}. Let us first calculate $I_{22}$, by using \eqref{fracA2} and Young's inequality, with exponent $m$ and $\frac{m}{m-1}$, as follows:
    \begin{align}\label{et1y}
         I_{22}&\leq\frac{1}{\uplambda_1^{2\delta-1}}
    	\int_{\H}e^{\upnu\|\x\|_{\H}^2} \|\A^{\delta}\x\|_{\H}^{2m-2}
    	\|\x\|_{\H}^2\|\x\|_{\V}^2\eta_{\e}(\d\x)
    	\nonumber\\&\leq\frac{1}{\uplambda_1^{2\delta-1}}
    	\int_{\H}e^{\upnu\|\x\|_{\H}^2}\left(\kappa_3\|\A^{\delta}\x\|_{\H}^{2m}+c(\kappa_3)\|\x\|_{\H}^{2m}\right)\|\x\|_{\V}^2\eta_{\e}(\d\x).
    \end{align} 
    By using the interpolation estimate \eqref{437} and Young's inequality, we compute
	\begin{align}\label{et1}
	|I_3|&\leq\int_{\H}e^{\upnu\|\x\|_{\H}^2}\|\A^{\delta}\x\|_{\H}^{2m-2}\|\A^{\delta}\x\|_{\H}^{1-2\delta}\|\A^{\delta+\frac{1}{2}}\x\|_{\H}^{2\delta}\|\x\|_{\H}\eta_{\e}(\d\x)\nonumber\\&\leq
	\int_{\H}e^{\upnu\|\x\|_{\H}^2}\left((1-2\delta)\|\A^{\delta}\x\|_{\H}^{m-1}\|\A^{\delta}\x\|_{\H}+2\delta\|\A^{\delta}\x\|_{\H}^{m-1}\|\A^{\delta+\frac{1}{2}}\x\|_{\H}^{2\delta}\right)\|\A^{\delta}\x\|_{\H}^{m-1}\|\x\|_{\H}\eta_{\e}(\d\x)\nonumber\\&\leq
	\frac{1}{\upnu\sigma(\delta,\Q)}\int_{\H}e^{\upnu\|\x\|_{\H}^2}
	\|\A^{\delta}\x\|_{\H}^{2(m-1)}\left(\mu\|\A^{\delta+\frac{1}{2}}\x\|_{\H}^2+\alpha\|\A^{\delta}\x\|_{\H}^2\right)\eta_{\e}(\d\x)
	\nonumber\\&\quad+\upnu\sigma(\delta,\Q)
	\left(\frac{(2\delta)^2}{4\mu}+\frac{(1-2\delta)^2}{4\alpha}\right)
    \int_{\H}e^{\upnu\|\x\|_{\H}^2}
    \|\A^{\delta}\x\|_{\H}^{2(m-1)}\|\x\|_{\H}^2\eta_{\e}(\d\x)
    \nonumber\\&<
    \frac{1}{\upnu\sigma(\delta,\Q)}\int_{\H}e^{\upnu\|\x\|_{\H}^2}
    \|\A^{\delta}\x\|_{\H}^{2(m-1)}\left(\mu\|\A^{\delta+\frac{1}{2}}\x\|_{\H}^2+\alpha\|\A^{\delta}\x\|_{\H}^2\right)\eta_{\e}(\d\x)
    \nonumber\\&\quad+\upnu\sigma(\delta,\Q)
    \left(\frac{1}{\mu}+\frac{1}{\alpha}\right)
    \underbrace{\int_{\H}e^{\upnu\|\x\|_{\H}^2}
    	\|\A^{\delta}\x\|_{\H}^{2(m-1)}\|\x\|_{\H}^2\eta_{\e}(\d\x)}_{\text{ Young's inequality with exponent }  m \text{ and } \frac{m}{m-1}}
    \nonumber\\&\leq
     \frac{1}{\upnu\sigma(\delta,\Q)}\int_{\H}e^{\upnu\|\x\|_{\H}^2}
    \|\A^{\delta}\x\|_{\H}^{2(m-1)}\left(\mu\|\A^{\delta+\frac{1}{2}}\x\|_{\H}^2+\alpha\|\A^{\delta}\x\|_{\H}^2\right)\eta_{\e}(\d\x)
     \nonumber\\&\quad+\upnu\sigma(\delta,\Q)
    \left(\frac{1}{\mu}+\frac{1}{\alpha}\right)\int_{\H}e^{\upnu\|\x\|_{\H}^2}\left(\kappa\|\A^{\delta}\x\|_{\H}^{2m}+c(\kappa)\|\x\|_{\H}^{2m}\right)\eta_{\e}(\d\x),
	\end{align}
	
where the constant $\kappa>0$ will be specified later. In a similar way, we find
\begin{align}\label{et2}
|I_4|&\leq\int_{\H}e^{\upnu\|\x\|_{\H}^2}\|\A^{\delta}\x\|_{\H}^{2m-4}\|\A^{\delta}\x\|_{\H}^{2-4\delta}\|\A^{\delta+\frac{1}{2}}\x\|_{\H}^{4\delta}\eta_{\e}(\d\x)\nonumber\\&=\int_{\H}e^{\upnu\|\x\|_{\H}^2}\|\A^{\delta}\x\|_{\H}^{4(m-1)\delta}\|\A^{\delta+\frac{1}{2}}\x\|_{\H}^{4\delta}\|\A^{\delta}\x\|_{\H}^{2(m-1)(1-2\delta)-4\delta}\eta_{\e}(\d\x)\nonumber\\&\leq\int_{\H}e^{\upnu\|\x\|_{\H}^2}\|\A^{\delta}\x\|_{\H}^{4(m-1)\delta}\|\A^{\delta+\frac{1}{2}}\x\|_{\H}^{4\delta}\eta_{\e}(\d\x)\nonumber\\&\quad+\int_{\H}e^{\upnu\|\x\|_{\H}^2}\|\A^{\delta}\x\|_{\H}^{4(m-1)\delta}\|\A^{\delta+\frac{1}{2}}\x\|_{\H}^{4\delta}\|\A^{\delta}\x\|_{\H}^{2(m-1)(1-2\delta)}\eta_{\e}(\d\x)\nonumber\\&\leq\kappa_1\int_{\H}e^{\upnu\|\x\|_{\H}^2} \|\A^{\delta}\x\|_{\H}^{2(m-1)}\|\A^{\delta+\frac{1}{2}}\x\|_{\H}^2\eta_{\e}(\d\x)+c(\kappa_1)\int_{\H}e^{\upnu\|\x\|_{\H}^2}\eta_{\e}(\d\x)\nonumber\\&\quad+\kappa_2\int_{\H}e^{\upnu\|\x\|_{\H}^2}\|\A^{\delta}\x\|_{\H}^{2(m-1)}\|\A^{\delta+\frac{1}{2}}\x\|_{\H}^2\eta_{\e}(\d\x)+c(\kappa_2)\int_{\H}e^{\upnu\|\x\|_{\H}^2}\|\A^{\delta}\x\|_{\H}^{2(m-1)}\eta_{\e}(\d\x),
\end{align}
where we have used the fact that $x\leq1+x^p$, for any $p\geq1$ and $x\geq0$, and the constants $\kappa_1>0$ and $\kappa_2>0$ will be specified later. In a similar way, one can compute 
\begin{align}\label{et4}
	 I_5\leq\int_{\H}e^{\upnu\|\x\|_{\H}^2}\eta_{\e}(\d\x)+ \int_{\H}e^{\upnu\|\x\|_{\H}^2}\|\A^{\delta}\x\|_{\H}^{2m}\eta_{\e}(\d\x).  
\end{align} 
Combining \eqref{et1}-\eqref{et4}, we obtain
\begin{align}\label{et9}
\upnu I_3+m I_4+I_5&\leq
 \frac{1}{\sigma(\delta,\Q)}\int_{\H}e^{\upnu\|\x\|_{\H}^2}
\|\A^{\delta}\x\|_{\H}^{2(m-1)}\left(\mu\|\A^{\delta+\frac{1}{2}}\x\|_{\H}^2+\alpha\|\A^{\delta}\x\|_{\H}^2\right)\eta_{\e}(\d\x)
\nonumber\\&\quad+\upnu^2\sigma(\delta,\Q)
\left(\frac{1}{\mu}+\frac{1}{\alpha}\right)\int_{\H}e^{\upnu\|\x\|_{\H}^2}\left(\kappa\|\A^{\delta}\x\|_{\H}^{2m}+c(\kappa)\|\x\|_{\H}^{2m}
\right)\eta_{\e}(\d\x)
\nonumber\\&\quad+\bigg(1+m c(\kappa_2)\bigg)\int_{\H} e^{\upnu\|\x\|_{\H}^2}\|\A^{\delta}\x\|_{\H}^{2m}\eta_{\e}(\d\x)
\nonumber\\&\quad+
\bigg(1+m c(\kappa_1)+m c(\kappa_2)\bigg)\int_{\H} e^{\upnu\|\x\|_{\H}^2}\eta_{\e}(\d\x)
\nonumber\\&\quad
+\bigg(m\kappa_1+m\kappa_2\bigg)\int_{\H} e^{\upnu\|\x\|_{\H}^2} \|\A^{\delta}\x\|_{\H}^{2(m-1)}\|\A^{\delta+\frac{1}{2}}\x\|_{\H}^2\eta_{\e}(\d\x).
\end{align}
Finally, by using \eqref{et1y} and \eqref{et9} in \eqref{et0}, we obtain
\begin{align}\label{et10}
	&\upnu\int_{\H}e^{\upnu\|\x\|_{\H}^2}\|\A^{\delta}\x\|_{\H}^{2m}\left(\mu\|\x\|_{\V}^2+\alpha\|\x\|_{\H}^2+2\beta\|\x\|_{\wi\L^{r+1}}^{r+1}\right)\eta_{\e}(\d\x) \nonumber\\&\quad+m	\int_{\H}e^{\upnu\|\x\|_{\H}^2}\|\A^{\delta}\x\|_{\H}^{2(m-1)}\left(
	\mu\|\A^{\delta+\frac{1}{2}}\x\|_{\H}^2+\alpha\|\A^{\delta}\x\|_{\H}^2\right)\eta_{\e}(\d\x)\nonumber\\&\leq \frac{m^2}{\mu\upnu}(C_a^2+\beta^2)\int_{\H}e^{\upnu\|\x\|_{\H}^2}\|\A^{\delta}\x\|_{\H}^{2m-2}\|\A^{\delta+\frac{1}{2}}\x\|_{\H}^2\eta_{\e}(\d\x)
	\nonumber\\&\quad+m\upnu^2\sigma^2(\Q,\delta)
	\left(\frac{1}{\mu}+\frac{1}{\alpha}\right)\int_{\H}e^{\upnu\|\x\|_{\H}^2}\left(\kappa\|\A^{\delta}\x\|_{\H}^{2m}+c(\kappa)\|\x\|_{\H}^{2m}
	\right)\eta_{\e}(\d\x)\nonumber\\&\quad+
	\frac{\upnu^2\sigma(\Q,\delta)}{\uplambda_1^{2\delta-1}}
	\int_{\H}e^{\upnu\|\x\|_{\H}^2}\left(\kappa_3\|\A^{\delta}\x\|_{\H}^{2m}+c(\kappa_3)\|\x\|_{\H}^{2m}\right)\|\x\|_{\V}^2\eta_{\e}(\d\x)\nonumber\\&\quad+\sigma(\Q,\delta)\bigg(m+\upnu+m^2 c(\kappa_2)\bigg)\int_{\H} e^{\upnu\|\x\|_{\H}^2}\|\A^{\delta}\x\|_{\H}^{2m}\eta_{\e}(\d\x)
	\nonumber\\&\quad+\sigma(\Q,\delta)\bigg(m+m^2 c(\kappa_1)+m^2 c(\kappa_2)\bigg)\int_{\H} e^{\upnu\|\x\|_{\H}^2}\eta_{\e}(\d\x)\nonumber\\&\quad
	+\sigma(\Q,\delta)\bigg(m^2\kappa_1+m^2\kappa_2\bigg)\int_{\H} e^{\upnu\|\x\|_{\H}^2}  \|\A^{\delta}\x\|_{\H}^{2(m-1)}\|\A^{\delta+\frac{1}{2}\x\|_{\H}^2}\eta_{\e}(\d\x).
\end{align}

We choose the constants $\kappa=\frac{1}{\upnu^2\sigma^2(\Q,\delta)
	\left(\frac{1}{\mu}+\frac{1}{\alpha}\right)}\frac{\alpha}{2},$ $\kappa_3=\frac{1}{\upnu\sigma(\Q,\delta)C_e}\frac{\mu}{2},$and $\kappa_1=\kappa_2=\frac{\mu}{4m\sigma(\Q,\delta)}$, so that one can find the constants 
\begin{align}\label{et6}
	c(\kappa)&=\left(\frac{m-1}{\kappa m}\right)^{m-1}\frac{1}{m}=
	\left(\upnu^2\sigma^2(\Q,\delta)\left(\frac{1}{\mu}+ \frac{1}{\alpha}\right)\frac{2}{\alpha}\right)^{m-1}
	\left(\frac{m-1}{ m}\right)^{m-1}\frac{1}{m},
	\nonumber\\
	c(\kappa_3)&=\left(\frac{m-1}{\kappa_3 m}\right)^{m-1}\frac{1}{m}=
	\left(\upnu\sigma(\Q,\delta)\frac{2}{\mu\uplambda_1^{2\delta-1}}
	\right)^{m-1}
	\left(\frac{m-1}{ m}\right)^{m-1}\frac{1}{m}\nonumber\\ c(\kappa_1)&=c(\kappa_2)=\frac{1-2\delta}{\left(\frac{\kappa_1}{2\delta}\right)^{\frac{2\delta}{1-2\delta}}}
	=\frac{1-2\delta}{\left(\frac{\mu}{8m\delta\sigma(\Q,\delta)}\right)^{\frac{2\delta}{1-2\delta}}}.
\end{align}
Substituting the values of $\kappa, \kappa_3, \kappa_1$ and $\kappa_2$ in \eqref{et10}, we obtain
\begin{align}\label{et11}
	&\upnu\int_{\H}e^{\upnu\|\x\|_{\H}^2}\|\A^{\delta}\x\|_{\H}^{2m}\left(\frac{\mu}{2}\|\x\|_{\V}^2+\alpha\|\x\|_{\H}^2+2\beta\|\x\|_{\wi\L^{r+1}}^{r+1}\right)\eta_{\e}(\d\x) \nonumber\\&\quad+\frac{m}{2}	\int_{\H}e^{\upnu\|\x\|_{\H}^2}\|\A^{\delta}\x\|_{\H}^{2m-2}\left(\mu\|\A^{\delta+\frac{1}{2}}\x\|_{\H}^2+\alpha\|\A^{\delta}\x\|_{\H}^2\right)\eta_{\e}(\d\x)\nonumber\\&\leq \frac{m^2}{\mu\upnu}(C_a^2+\beta^2)\int_{\H}e^{\upnu\|\x\|_{\H}^2}\|\A^{\delta}\x\|_{\H}^{2m-2}\|\A^{\delta+\frac{1}{2}}\x\|_{\H}^2\eta_{\e}(\d\x)\nonumber\\&\quad+m\upnu^2\sigma^2(\Q,\delta)
	\left(\frac{1}{\mu}+\frac{1}{\alpha}\right)c(\kappa)
		\int_{\H}e^{\upnu\|\x\|_{\H}^2}\|\x\|_{\H}^{2m}\eta_{\e}(\d\x)\nonumber\\&\quad+\frac{\upnu^2\sigma(\Q,\delta)}{\uplambda_1^{2\delta-1}}c(\kappa_3)
	\int_{\H}e^{\upnu\|\x\|_{\H}^2}\|\x\|_{\H}^{2m}\|\x\|_{\V}^2\eta_{\e}(\d\x)\nonumber\\&\quad+\sigma(\Q,\delta)\bigg(m+\upnu+m^2 c(\kappa_2)\bigg)\int_{\H} e^{\upnu\|\x\|_{\H}^2}\|\A^{\delta}\x\|_{\H}^{2m}\eta_{\e}(\d\x)
	\nonumber\\&\quad+\sigma(\Q,\delta)\bigg(m+m^2 c(\kappa_1)+m^2 c(\kappa_2)\bigg)\int_{\H} e^{\upnu\|\x\|_{\H}^2}\eta_{\e}(\d\x)
\end{align}
On rearranging the terms in \eqref{et11}, we finally obtain for $r=2$
\begin{align}\label{et7}
	&\upnu\int_{\H}e^{\upnu\|\x\|_{\H}^2}\|\A^{\delta}\x\|_{\H}^{2m}\left(\frac{\mu}{2}\|\x\|_{\V}^2+\alpha\|\x\|_{\H}^2+2\beta\|\x\|_{\wi\L^{r+1}}^{r+1}\right)\eta_{\e}(\d\x)+\nonumber\\&\quad+ \left(\frac{m\mu}{2}-\frac{m^2}{\mu\upnu}(C_a^2+\beta^2)\right)\int_{\H}e^{\upnu\|\x\|_{\H}^2}\|\A^{\delta}\x\|_{\H}^{2m-2} \|\A^{\delta+\frac{1}{2}}\x\|_{\H}^2\eta_{\e}(\d\x)\nonumber\\&\quad+ \left(\frac{m\alpha}{2}-\left[m+\upnu+m^2 c(\kappa_2)\right]\sigma(\Q,\delta)
	\right)\int_{\H}e^{\upnu\|\x\|_{\H}^2}\|\A^{\delta}\x\|_{\H}^{2m}
	\eta_{\e}(\d\x)\nonumber\\&\leq 
	m\upnu^2\sigma^2(\Q,\delta)
	\left(\frac{1}{\mu}+\frac{1}{\alpha}\right)c(\kappa)
		\int_{\H}e^{\upnu\|\x\|_{\H}^2}\|\x\|_{\H}^{2m}\eta_{\e}(\d\x)\nonumber\\&\quad+\frac{\upnu^2\sigma(\Q,\delta)}{\uplambda_1^{2\delta-1}}c(\kappa_3)
\int_{\H}e^{\upnu\|\x\|_{\H}^2}\|\x\|_{\H}^{2m}\|\x\|_{\V}^2\eta_{\e}(\d\x)\nonumber\\&\quad+
	\sigma(\Q,\delta)\bigg(m+m^2 c(\kappa_1)+m^2 c(\kappa_2)\bigg)\int_{\H} e^{\upnu\|\x\|_{\H}^2}\eta_{\e}(\d\x).
\end{align}

Proceeding in a similar fashion and using \eqref{445}, we deduce the following for $r=3$:
\begin{align}\label{et8}
&\upnu\int_{\H}e^{\upnu\|\x\|_{\H}^2}\|\A^{\delta}\x\|_{\H}^{2m}
	\left(\frac{\mu}{2}\|\x\|_{\V}^2+\alpha\|\x\|_{\H}^2+\beta\|\x\|_{\wi\L^{r+1}}^{r+1}\right)
	\eta_{\e}(\d\x)
	\nonumber\\&\quad+ \left(\frac{m\mu}{2}-\frac{C_a^2 m^2}{\mu\upnu}(1+2\beta\mu)\right) \int_{\H}e^{\upnu\|\x\|_{\H}^2}\|\A^{\delta}\x\|_{\H}^{2m-2} \|\A^{\delta+\frac{1}{2}}\x\|_{\H}^2\eta_{\e}(\d\x)\nonumber\\ 
	&\quad+\left(\frac{m\alpha}{2}-\left[m+\upnu+m^2 c(\kappa_2)\right]\sigma(\Q,\delta)
		\right)\int_{\H}e^{\upnu\|\x\|_{\H}^2}\|\A^{\delta}\x\|_{\H}^{2m}
	\eta_{\e}(\d\x)\nonumber\\&\leq
	m\upnu^2\sigma^2(\Q,\delta)
	\left(\frac{1}{\mu}+\frac{1}{\alpha}\right)c(\kappa)
		\int_{\H}e^{\upnu\|\x\|_{\H}^2}\|\x\|_{\H}^{2m}\eta_{\e}(\d\x)\nonumber\\&\quad+\upnu^2\sigma(\Q,\delta)c(\kappa_3)C_e
	\int_{\H}e^{\upnu\|\x\|_{\H}^2}\|\x\|_{\H}^{2m}\|\x\|_{\V}^2\eta_{\e}(\d\x)\nonumber\\&\quad+
	\sigma(\Q,\delta)\bigg(m+m^2 c(\kappa_1)+m^2 c(\kappa_2)\bigg)\int_{\H} e^{\upnu\|\x\|_{\H}^2}\eta_{\e}(\d\x).
\end{align}
From Lemma \ref{lem4.1}, we infer that the right-hand quantity is bounded. Moreover, by using \eqref{fracA11}, we write 
\begin{align*}
	\|\A^{\delta}\x\|_{\H}^{2m}=\|\A^{\delta}\x\|_{\H}^{2m-2}\|\A^{\delta}\x\|_{\H}^2\leq\frac{1}{\uplambda_1}\|\A^{\delta}\x\|_{\H}^{2m-2} \|\A^{\delta+\frac{1}{2}}\x\|_{\H}^2.
\end{align*}
 Therefore, from lemma \ref{lem4.1}, it follows that \eqref{et7}-\eqref{et8} holds provided
\begin{itemize}
	\item[\emph{(i)}] $\mu+\alpha>\frac{2C_a^2 m}{\mu\upnu} (1+2\beta\mu)+\frac{2}{\uplambda_1}\left[1+\upnu+m c(\kappa_2)\right]\sigma(\Q,\delta)$, 
	\item[\emph{(ii)}] $\mu+\alpha>\frac{2m}{\mu\upnu}(C_a^2+\beta^2)+\frac{2}{\uplambda_1}\left[1+\upnu+m c(\kappa_2)\right]\sigma(\Q,\delta)$,
	\item[\emph{(iii)}] $0<\upnu\leq\frac{2\alpha+\mu\upnu_1}{2\Tr(\Q)}$.
\end{itemize}
From conditions $(i)$ and $(ii)$, we get following lower bounds on $\upnu$:
\begin{align}\label{UBL}
	\upnu>\frac{2C_a^2 m}{\mu(\mu+\alpha)}(1+2\beta\mu), \  \upnu>\frac{2m}{\mu(\mu+\alpha)}(C_a^2+\beta^2).
\end{align}
Furthermore, we get following upper bound for $\upnu$:
\begin{align*}
	\upnu<\frac{(\mu+\alpha)\uplambda_1}{4 \sigma(\Q,\delta)}.
\end{align*}
Finally, we conclude that \eqref{et7}-\eqref{et8} hold if  
\begin{align*}
  \sup\left\{\frac{2C_a^2 m}{\mu(\mu+\alpha)}(1+2\beta\mu), \frac{2m}{\mu(\mu+\alpha)}(C_a^2+\beta^2)\right\}
  <\upnu<\inf\left\{\frac{(\mu+\alpha)\uplambda_1}{4 \sigma(\Q,\delta)} ,\frac{2\alpha+\mu\uplambda_1}{2\Tr(\Q)}\right\},
\end{align*}
     where $\mu$ and $\alpha$ satisfy the condition  \eqref{439}.
\end{proof}
Let us now take $\delta\in(\frac{1}{4},\frac{1}{2})$. Then, we have the following result: 
\begin{lemma}\label{lem5.2}
	Assume that $$\Tr(\A\Q)<+\infty.$$ Then, we have 
	\begin{align}\label{4p44}
		\int_{\H}\|\A\x\|_{\H}^2\eta(\d\x)\leq C,
	\end{align}
where the constant $C=\mathrm{const}\{\Tr(\Q),m,\beta,\mu\}$.
\end{lemma}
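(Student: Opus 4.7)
The plan is to exploit the invariance of the approximate measure $\eta_\e$ tested against the function $\psi(\x):=\|\x\|_{\V}^2=\|\A^{1/2}\x\|_{\H}^2$. After a standard cylindrical-function approximation of $\psi$ on the eigenbasis $\{\boldsymbol{e}_k\}$ of $\A$ (which places the approximants in $\D(\mathcal{N}_2^\e)$) and a passage to the limit using the uniform bounds of Lemmas~\ref{lem4.1} and~\ref{lem4.5}, the identity $\int_{\H}\mathcal{N}_2^\e\psi\,\eta_\e(\d\x)=0$, together with $\D_{\x}\psi(\x)=2\A\x$ and $\D_{\x}^2\psi(\x)=2\A$, yields
\begin{align*}
2\mu\int_{\H}\|\A\x\|_{\H}^2\,\eta_\e(\d\x)
&=\Tr(\A\Q)-2\alpha\int_{\H}\|\x\|_{\V}^2\,\eta_\e(\d\x)\\
&\quad-2\int_{\H}(\B_\e(\x),\A\x)\,\eta_\e(\d\x)-2\beta\int_{\H}(\mathcal{C}_\e(\x),\A\x)\,\eta_\e(\d\x).
\end{align*}
So it suffices to produce an $\e$-uniform bound on the right-hand side and then to pass to the limit $\e\downarrow 0$ using $\eta_\e\rightharpoonup\eta$ together with the lower semicontinuity of $\x\mapsto\|\A\x\|_{\H}^2$ on $\H$ (Fatou).

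For the convective piece I would use the Agmon-type estimate \eqref{435} with some $\delta\in(1/4,1/2)$,
\begin{align*}
|(\B(\x),\A\x)|\leq\|\x\|_{\wi\L^\infty}\|\x\|_{\V}\|\A\x\|_{\H}\leq C_a\,\|\A^\delta\x\|_{\H}^{2\delta}\|\A^{\delta+1/2}\x\|_{\H}^{1-2\delta}\|\x\|_{\V}\|\A\x\|_{\H},
\end{align*}
followed by Young's inequality to split off $\kappa\|\A\x\|_{\H}^2$ (with $\kappa<\mu/4$) plus a polynomial expression in $\|\A^\delta\x\|_{\H}$, $\|\A^{\delta+1/2}\x\|_{\H}$ and $\|\x\|_{\V}$. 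For the Forchheimer term, treating $r\in\{1,2,3\}$ in turn, one has
\begin{align*}
|(\mathcal{C}(\x),\A\x)|\leq\|\x\|_{\wi\L^{2r}}^{r}\|\A\x\|_{\H}\leq C\|\x\|_{\V}^{r}\|\A\x\|_{\H}
\end{align*}
by the two-dimensional Sobolev embedding $\V\hookrightarrow\wi\L^{2r}$ (for $r=1$ this is just Poincar\'e). A further Young split again isolates $\kappa\|\A\x\|_{\H}^2$ at the cost of a moment $\|\x\|_{\V}^{2r}$.

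The residual $\eta_\e$-integrals are of the type $\int_{\H}\|\A^\delta\x\|_{\H}^{a}\|\A^{\delta+1/2}\x\|_{\H}^{b}\|\x\|_{\V}^{c}\,\eta_\e(\d\x)$, each controlled by Lemma~\ref{lem4.5} (applied with $\delta\in(1/4,1/2)$ and a suitably large $m$) together with the exponential-moment bounds of Lemma~\ref{lem4.1}. Absorbing the $\kappa\|\A\x\|_{\H}^2$ pieces into the left-hand side delivers an $\e$-uniform bound on $\int_{\H}\|\A\x\|_{\H}^2\,\eta_\e(\d\x)$ depending only on $\mu,\alpha,\beta,\Tr(\Q)$ and $\Tr(\A\Q)$, and the limit $\e\downarrow 0$ then gives \eqref{4p44}.

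The main obstacle is the exponent accounting: the Young-splits have to be arranged so that the power of $\|\A^{\delta+1/2}\x\|_{\H}$ that survives in each residual integral does not exceed $2$, which is the exponent controlled by Lemma~\ref{lem4.5}. The restriction $\delta>1/4$ (equivalently $1-2\delta<1/2$) is precisely what makes this balance feasible when interpolating Agmon against $\|\A\x\|_{\H}^2$. A secondary technical point, the justification that $\psi(\x)=\|\x\|_{\V}^2$ lies in $\D(\mathcal{N}_2^\e)$, is handled by the finite-dimensional approximation $\psi_n(\x)=\sum_{k=1}^n\uplambda_k(\x,\boldsymbol{e}_k)^2$ combined with the uniform bounds just obtained.
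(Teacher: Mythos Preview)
Your proposal is correct and follows essentially the same route as the paper: both test the invariance identity against $\psi(\x)=\|\A^{1/2}\x\|_{\H}^2$, estimate $|(\B(\x),\A\x)|\le\|\x\|_{\wi\L^\infty}\|\x\|_{\V}\|\A\x\|_{\H}$ and $|(\mathcal{C}(\x),\A\x)|\le\|\x\|_{\wi\L^{2r}}^{r}\|\A\x\|_{\H}$, absorb $\|\A\x\|_{\H}^2$ via Young, and control the residual moments through Lemma~\ref{lem4.5} with $\delta\in(\tfrac14,\tfrac12)$. The only cosmetic difference is that you work explicitly with $\eta_\e$ and pass to the limit, whereas the paper writes the computation directly for $\eta$ (implicitly relying on the same approximation); your version is arguably the more honest one.
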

	
\begin{remark}
Note that for $\delta<\frac{1}{2}$, we have  $\D(\A^{\frac{1}{2}})\hookrightarrow\D(\A^{\delta})$. Thus, $\Tr(\A\Q)<\infty$ implies $\Tr(\A^{2\delta}\Q)<\infty$ and therefore, we can use the result of Lemma \ref{lem4.5}. 
\end{remark}
\begin{proof}[Proof of Lemma \ref{lem5.2}]
	Let us set $$\varphi(\x)=\|\A^{\frac{1}{2}}\x\|_{\H}^2\ \text{ for all }\ \x\in\D(\A^{\frac{1}{2}}). $$ Then, it is immediate that 
		\begin{align*}
			\D_{\x}\varphi(\x)=2\A\x,\ \text{ and }\ \D_{\x\x}\varphi(\x)=2\A. 
			\end{align*}
			Therefore, we deduce 
			\begin{align*}
			&\frac{1}{2}\Tr[\Q\D_{\x}^2\psi(\x)]-(\mu\A\x+\alpha\x+\B(\x)+\beta\mathcal{C}(\x),\D_{\x}\psi(\x))\nonumber\\&=\Tr(\Q\A)-2\mu\|\A\x\|_{\H}^2-2\alpha\|\A^{\frac{1}{2}}\x\|_{\H}^2-2(\B(\x),\A\x)-2\beta(\mathcal{C}(\x),\A\x)\nonumber\\&\leq
			  \Tr(\Q\A)-\mu\|\A\x\|_{\H}^2-2\alpha\|\A^{\frac{1}{2}}\x\|_{\H}^2+\frac{2}{\mu}\|\x\|_{\wi\L^{\infty}}^2\|\x\|_{\V}^2+\frac{2\beta^2}{\mu}\|\x\|_{\wi\L^{2r}}^{2r}. 
			\end{align*}
			Using the fact that $$\int_{\H}\mathcal{N}_2\varphi(\x)\eta(\d\x)=0,$$ we deduce from the above expression that 
			\begin{align}\label{4p45}
				&\mu\int_{\H}\|\A\x\|_{\H}^2\eta(\d\x)+2\alpha\int_{\H}\|\A^{\frac{1}{2}}\x\|_{\H}^2\d\eta(\x)\nonumber\\&\leq\Tr(\A\Q)+\frac{2}{\mu}\int_{\H}\|\x\|_{\wi\L^{\infty}}^2\|\x\|_{\V}^2\d\eta(\x)+\frac{2\beta^2}{\mu}\int_{\H}\|\x\|_{\wi\L^{2r}}^{2r}\d\eta(\x). 
			\end{align}
			Using the estimates \eqref{435}-\eqref{437}, we estimate using Lemma \ref{lem4.5} as 
			\begin{align}\label{wtm1}
			\int_{\H}\|\x\|_{\wi\L^{\infty}}^2\|\x\|_{\V}^2\eta(\d\x)&\leq C\int_{\H} \|\A^{\delta}\x\|_{\H}^{8\delta}\|\A^{\delta+\frac{1}{2}}\x\|_{\H}^{4-8\delta}\eta(\d\x)
			\nonumber\\&\leq C\left(\int_{\H}\|\A^{\delta}\x\|_{\H}^{\frac{8\delta}{2-4\delta}}
			\|\A^{\delta+\frac{1}{2}}\x\|_{\H}^2\eta(\d\x)\right)^{2-4\delta}\leq C,
			\end{align}
			by choosing $m=\frac{1}{1-2\delta}$. Since $\frac{1}{4}<\delta<\frac{1}{2}$, we get $2<m<\infty$. For $r=2$, Ladyzhenskaya's and H\"older's inequalities yield
			\begin{align}\label{wtm2}
			\int_{\H}\|\x\|_{\wi\L^{4}}^4\eta(\d\x)&\leq 2\int_{\H}\|\x\|_{\H}^2\|\x\|_{\V}^2\eta(\d\x)
			\nonumber\\&\leq 2\left(\int_{\H}\|\x\|_{\H}^4\eta(\d\x)\right)^{\frac{1}{2}} \left(\int_{\H}\|\x\|_{\V}^4\eta(\d\x)\right)^{\frac{1}{2}}\nonumber\\&\leq C
			\left(\int_{\H}\|\x\|_{\H}^4\eta(\d\x)\right)^{\frac{1}{2}} \left(\int_{\H} \|\A^{\delta}\x\|_{\H}^{8\delta}\|\A^{\delta+\frac{1}{2}}\x\|_{\H}^{4-8\delta}\eta(\d\x)\right)
			^{\frac{1}{2}}\nonumber\\&\leq C\left(\int_{\H}\|\x\|_{\H}^4\eta(\d\x)\right)^{\frac{1}{2}} \left(\int_{\H}\|\A^{\delta}\x\|_{\H}^{\frac{8\delta}{2-4\delta}}
			\|\A^{\delta+\frac{1}{2}}\x\|_{\H}^2\eta(\d\x)\right)^{1-2\delta}\leq C,
			\end{align}
		where we have used \eqref{4p7} and \eqref{wtm1}. Similarly, for $r=3$, an application of the Gagliardo-Nirenberg inequality, the estimates \eqref{436} and Lemma \ref{lem4.5} provide us 
			\begin{align}\label{wtm3}
			\int_{\H}\|\x\|_{\wi\L^{6}}^6\eta(\d\x)&\leq C\int_{\H}\|\x\|_{\V}^4\|\x\|_{\H}^2 \eta(\d\x) \nonumber\\&\leq C\int_{\H}\|\A^{\delta}\x\|_{\H}^{8\delta} \|\A^{\delta+\frac{1}{2}}\x\|_{\H}^{4-8\delta}\|\x\|_{\H}^2\eta(\d\x)\nonumber\\&\leq \left(\int_{\H}\|\x\|_{\H}^{\frac{4}{4\delta-1}}\eta(\d\x)\right)^{4\delta-1}\left(\int_{\H}\|\A^{\delta}\x\|_{\H}^{\frac{8\delta}{2-4\delta}}\|\A^{\delta+\frac{1}{2}}\x\|_{\H}^2\eta(\d\x)\right)^{2-4\delta}\leq C,
			\end{align}
			by using \eqref{4p7} and \eqref{wtm1}. Using the above estimates \eqref{wtm1}-\eqref{wtm3} in \eqref{4p45}, we finally obtain \eqref{4p44}. 
\end{proof}

\section{Eseential $m$-dissipativity of the Kolmogorov operator: Main result}\label{disscore}\setcounter{equation}{0}

We say that a linear operator $\mathscr{A}:\D(\mathscr{A})\subset\mathcal{H}\to\mathcal{H}$ in a Hilbert space $\mathcal{H}$  is \emph{dissipative} if
\begin{align}\label{dissp}
	\|\varphi\|_{\mathcal{H}}\leq \frac{1}{\lambda}\|\lambda\varphi-\mathscr{A}\varphi\|_{\mathcal{H}}\ \text{ for all }\ \varphi\in\D(\mathscr{A}),\ \lambda>0. 
\end{align}
Any dissipative operator is \emph{closable}, that is, it has a closed extension \cite[Chapter 3, Lemma 3.4.4]{ware}. The dissipative operator $\mathscr{A}$  is called \emph{$m$-dissipative} if the range of $\lambda\I-\mathscr{A}$  coincides with $\mathcal{H}$ for some (and consequently for any) $\lambda>0$. An operator $\mathscr{A}$ with dense domain is $m$-dissipative if and only if it is the infinitesimal generator of a strongly continuous semigroup of contractions in $\mathcal{H}$. 

\begin{theorem}[Lumer-Phillips, {\cite[Theorem 3.20]{gdp1}}]
Let $\mathscr{A}:\D(\mathscr{A})\subset\mathcal{H}\to\mathcal{H}$  be a dissipative operator in the Hilbert space $\mathcal{H}$ such that $\D(\mathscr{A})$ is dense in $\mathcal{H}$.  Assume that for some $\lambda>0$,  the range of $\lambda\I-\mathscr{A}$ is dense in $\mathcal{H}$. Then the closure of $\mathscr{A}$ is $m$-dissipative. 
\end{theorem}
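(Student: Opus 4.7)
The plan is to verify that the closure $\bar{\mathscr{A}}$ of $\mathscr{A}$ is $m$-dissipative by checking the two defining properties: that $\bar{\mathscr{A}}$ itself is still dissipative, and that for the specified $\lambda>0$ the range of $\lambda\I-\bar{\mathscr{A}}$ is all of $\mathcal{H}$ (not merely dense). Closability of $\mathscr{A}$ is already granted by the statement preceding the theorem, so $\bar{\mathscr{A}}$ is well defined as the operator whose graph is the closure in $\mathcal{H}\times\mathcal{H}$ of the graph of $\mathscr{A}$.

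First I would verify that $\bar{\mathscr{A}}$ inherits dissipativity from $\mathscr{A}$. Given $\varphi\in\D(\bar{\mathscr{A}})$, pick $\varphi_n\in\D(\mathscr{A})$ with $\varphi_n\to\varphi$ and $\mathscr{A}\varphi_n\to\bar{\mathscr{A}}\varphi$. Applying \eqref{dissp} to each $\varphi_n$ and passing to the limit gives
\begin{equation*}
\|\varphi\|_{\mathcal{H}}\leq\frac{1}{\lambda}\|\lambda\varphi-\bar{\mathscr{A}}\varphi\|_{\mathcal{H}},
\end{equation*}
so $\bar{\mathscr{A}}$ satisfies the dissipativity bound on $\D(\bar{\mathscr{A}})$.

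The core of the proof is the range condition. Fix $f\in\mathcal{H}$. By the hypothesised density of the range of $\lambda\I-\mathscr{A}$, select a sequence $\varphi_n\in\D(\mathscr{A})$ with $f_n:=\lambda\varphi_n-\mathscr{A}\varphi_n\to f$ in $\mathcal{H}$. Applying \eqref{dissp} to the difference $\varphi_n-\varphi_m\in\D(\mathscr{A})$ yields
\begin{equation*}
\|\varphi_n-\varphi_m\|_{\mathcal{H}}\leq\frac{1}{\lambda}\|\lambda(\varphi_n-\varphi_m)-\mathscr{A}(\varphi_n-\varphi_m)\|_{\mathcal{H}}=\frac{1}{\lambda}\|f_n-f_m\|_{\mathcal{H}},
\end{equation*}
so $\{\varphi_n\}$ is Cauchy in $\mathcal{H}$ and converges to some $\varphi\in\mathcal{H}$. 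Consequently $\mathscr{A}\varphi_n=\lambda\varphi_n-f_n\to\lambda\varphi-f$, and the definition of the closure forces $\varphi\in\D(\bar{\mathscr{A}})$ with $\bar{\mathscr{A}}\varphi=\lambda\varphi-f$, i.e.\ $(\lambda\I-\bar{\mathscr{A}})\varphi=f$. Since $f$ was arbitrary, $\mathrm{Range}(\lambda\I-\bar{\mathscr{A}})=\mathcal{H}$, completing the proof.

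The main (and only) delicate point is recognising that the estimate \eqref{dissp} controls the approximating sequence well enough to extract a strong limit in $\mathcal{H}$, and that the joint convergence of $(\varphi_n,\mathscr{A}\varphi_n)$ places the limit squarely inside the graph of $\bar{\mathscr{A}}$. Everything else — closability, dissipativity of the closure, and uniqueness of the resolvent — is a direct limiting argument from \eqref{dissp}. Once the range condition is established for one $\lambda>0$, a standard connectedness argument in the resolvent set extends it to every $\mu>0$, but this is not required by the definition of $m$-dissipativity used in the paper.
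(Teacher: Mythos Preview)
Your proof is correct and is the standard argument for this version of Lumer--Phillips. Note, however, that the paper does not actually prove this theorem: it is stated as a cited result from \cite[Theorem 3.20]{gdp1} with no proof given, and is then invoked as a tool in the proof of Theorem~\ref{thm4.7}. So there is no ``paper's own proof'' to compare against; what you have written is precisely the textbook proof one would expect to find in the cited reference.
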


\begin{theorem}\label{thm4.7}
	Assume that the condition \eqref{439} holds true and that $\Tr(\A\Q)<+\infty$. Then $\mathcal{N}_0$ is dissipative in $\L^2(\H;\eta)$ and its closure $\overline{\mathcal{N}_0}$ in $\L^2(\H;\eta)$ coincides with the infinitesimal generator $\mathcal{N}_2$ of $\{\P_t\}_{t\geq 0}$ in $\L^2(\H;\eta)$. 
\end{theorem}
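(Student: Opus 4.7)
The plan is to follow the classical Barbu--Da Prato essential $m$-dissipativity scheme developed in \cite{VBGA,VBGD}, suitably adapted to accommodate the absorption operator $\mathcal{C}(\cdot)$ in addition to the convective term $\B(\cdot)$. The proof has three stages: identification of $\mathcal{N}_0$ with $\mathcal{N}_2$ on the exponential core (which yields dissipativity and closability), followed by a regularization plus passage-to-the-limit argument to verify the density of the range of $\lambda\I-\mathcal{N}_0$, which via the Lumer--Phillips theorem forces $\overline{\mathcal{N}_0}=\mathcal{N}_2$.

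\textbf{Step 1 (Identification and dissipativity on $\mathscr{E}_{\A}(\H)$).} For $\psi=\psi_h\in\mathscr{E}_{\A}(\H)$ with $h\in\D(\A)$, the It\^o formula applied to $\psi(\X(t,\x))$ along the unique strong solution of \eqref{32} gives, after taking expectation and noting that the stochastic integral is a true martingale (since $\Tr(\Q)<+\infty$ and Theorem \ref{exis2} supplies enough moments),
\begin{align*}
(\P_t\psi)(\x)-\psi(\x)=\int_0^t (\P_s\mathcal{N}_0\psi)(\x)\,\d s,\qquad \x\in\H,\ t\geq 0.
\end{align*}
Since $\mathcal{N}_0\psi\in\C_b(\H)\hookrightarrow\L^2(\H;\eta)$ and $\{\P_t\}_{t\geq 0}$ is a strongly continuous contraction semigroup on $\L^2(\H;\eta)$, dividing by $t$ and letting $t\to 0^+$ yields $\psi\in\D(\mathcal{N}_2)$ with $\mathcal{N}_2\psi=\mathcal{N}_0\psi$. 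Thus $\mathcal{N}_0$ inherits dissipativity from the contraction generator $\mathcal{N}_2$, hence is closable in $\L^2(\H;\eta)$ with $\overline{\mathcal{N}_0}\subset\mathcal{N}_2$.

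\textbf{Step 2 (Lumer--Phillips reduction).} Since $\mathscr{E}_{\A}(\H)$ is dense in $\L^2(\H;\eta)$, Lumer--Phillips reduces $\overline{\mathcal{N}_0}=\mathcal{N}_2$ to showing that $(\lambda\I-\mathcal{N}_0)(\mathscr{E}_{\A}(\H))$ is dense in $\L^2(\H;\eta)$ for some $\lambda>0$. Equivalently, for $\f$ in a dense subfamily I must produce $\psi_n\in\mathscr{E}_{\A}(\H)$ with $\psi_n\to\varphi:=R(\lambda,\mathcal{N}_2)\f$ and $\mathcal{N}_0\psi_n\to\mathcal{N}_2\varphi$ in $\L^2(\H;\eta)$.

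\textbf{Step 3 (Regularization and passage to the limit).} I use the truncated system \eqref{4.33}, whose nonlinearities $\B_\e,\mathcal{C}_\e$ are bounded and globally Lipschitz on $\H$, so that $\mu\A+\alpha\I+\B_\e+\beta\mathcal{C}_\e$ is a bounded Lipschitz perturbation of the Ornstein--Uhlenbeck drift. Let $\mathcal{N}_2^\e$ be the $\L^2(\H;\eta_\e)$-generator of $\{\P_t^\e\}_{t\geq 0}$ and $\mathcal{N}_0^\e$ its differential expression. Standard perturbation arguments for OU-type generators show that $\mathscr{E}_{\A}(\H)$ is a core for $\mathcal{N}_2^\e$, so that $\varphi_\e:=R(\lambda,\mathcal{N}_2^\e)\f$ belongs to the $\mathcal{N}_0^\e$-graph-closure of $\mathscr{E}_{\A}(\H)$. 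It then suffices to show
\begin{align*}
\varphi_\e\longrightarrow\varphi\quad\text{and}\quad \mathcal{N}_0\varphi_\e\longrightarrow\mathcal{N}_0\varphi\quad\text{in }\L^2(\H;\eta),\qquad\text{as }\e\to 0,
\end{align*}
and take a diagonal approximating sequence. The first convergence follows from the continuous dependence $\X_\e(\cdot,\x)\to\X(\cdot,\x)$ (obtained by energy estimates exploiting the monotonicity of $\A$ and $\beta\mathcal{C}$ together with the local boundedness of $\B$), the weak convergence $\eta_\e\rightharpoonup\eta$, and the uniform-in-$\e$ moment bounds of Lemmas \ref{lem4.1} and \ref{lem4.5}. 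For the second, decompose
\begin{align*}
\mathcal{N}_0\varphi_\e=\mathcal{N}_0^\e\varphi_\e-\bigl((\B-\B_\e)(\x),\D_{\x}\varphi_\e(\x)\bigr)-\beta\bigl((\mathcal{C}-\mathcal{C}_\e)(\x),\D_{\x}\varphi_\e(\x)\bigr),
\end{align*}
where $\mathcal{N}_0^\e\varphi_\e=\lambda\varphi_\e-\f\to\lambda\varphi-\f=\mathcal{N}_2\varphi$ in $\L^2(\H;\eta)$; the two remainders are supported on $\{\|\x\|_\V>\e^{-1}\}$ and are bounded via Cauchy--Schwarz, the interpolation inequalities \eqref{435}--\eqref{437}, and the integration-by-parts control of $\Q^{1/2}\D_{\x}\varphi_\e$ in $\L^2(\H;\eta_\e;\H)$. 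Dominated convergence, together with $\eta(\{\|\x\|_\V>\e^{-1}\})\to 0$ (a consequence of $\int_\H\|\A\x\|_\H^2\,\eta(\d\x)<+\infty$ from Lemma \ref{lem5.2}, which in turn uses $\Tr(\A\Q)<+\infty$), drives both remainders to zero.

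The main obstacle is the simultaneous control of the two nonlinear perturbations $\B-\B_\e$ and $\mathcal{C}-\mathcal{C}_\e$ when paired with $\D_{\x}\varphi_\e$ in $\L^2(\H;\eta)$: unlike the 2D SNSE setting of \cite{VBGA,VBGD}, the damping $\mathcal{C}(\x)=\mathrm{P}_\H(|\x|^{r-1}\x)$ contributes an $\wi\L^{2r}$-type growth that is integrable under $\eta$ only because of the sharp weighted moment estimates of Lemma \ref{lem4.5}, which themselves require the precise balance \eqref{439} between $\mu,\alpha$ and $\max\{4\Tr(\Q),\Tr(\A^{2\delta}\Q)\}$. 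Matching the exponent $m$ in Lemma \ref{lem4.5} to the polynomial growth of $|\x|^{r-1}\x$ and to the derivative order present in $\D_{\x}\varphi_\e$ is the most delicate part of the book-keeping.
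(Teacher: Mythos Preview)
Your overall architecture (It\^o identification on $\mathscr{E}_{\A}(\H)$, dissipativity, Lumer--Phillips via an $\e$-regularized resolvent) matches the paper's, but two steps contain genuine gaps.

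\textbf{Step 1: the claim $\mathcal{N}_0\psi\in\C_b(\H)$ is false.} For $\psi_h(\x)=e^{i(h,\x)}$ with $h\in\D(\A)$ one has $(\mu\A\x+\alpha\x+\B(\x)+\beta\mathcal{C}(\x),\D_\x\psi_h(\x))=i\psi_h(\x)\,(\mu\A\x+\alpha\x+\B(\x)+\beta\mathcal{C}(\x),h)$, which grows polynomially in $\x$ and is certainly unbounded. What you need is $\mathcal{N}_0\psi\in\L^2(\H;\eta)$, and this is exactly where the paper invokes $\Tr(\A\Q)<+\infty$: it bounds $\frac{1}{t}(\P_t\varphi-\varphi)$ in $\L^2(\H;\eta)$ via \eqref{traq}, using $\int_\H\|\A\x\|_\H^2\,\eta(\d\x)<+\infty$ from Lemma~\ref{lem5.2}. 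Your argument bypasses this and therefore does not explain the role of the hypothesis $\Tr(\A\Q)<+\infty$.

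\textbf{Step 3: the remainder estimate cannot be closed with the carr\'e du champs bound alone.} The remainders are $(\B(\x)-\B_\e(\x),\D_\x\varphi_\e(\x))$ and $\beta(\mathcal{C}(\x)-\mathcal{C}_\e(\x),\D_\x\varphi_\e(\x))$, which involve the \emph{full} gradient $\D_\x\varphi_\e$, not $\Q^{1/2}\D_\x\varphi_\e$. The integration-by-parts identity only controls $\|\Q^{1/2}\D_\x\varphi_\e\|_{\L^2(\H;\eta_\e;\H)}$; to use it you would need $\Q^{-1/2}(\B-\B_\e)\in\L^2(\H;\eta;\H)$, which is not assumed. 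The paper instead obtains the \emph{pointwise} bound $\|\D_\x\varphi_\e(\x)\|_\H\le\lambda^{-1}e^{\kappa\|\x\|_\H^2}\|f\|_1$ (for $f\in\C_b^1(\H)$) from the exponential derivative estimate of Lemma~\ref{lem4.4}; this is what allows one to reduce the remainder to $\int_{\{\|\x\|_\V\ge\e^{-1}\}}e^{2\kappa\|\x\|_\H^2}\|\B(\x)\|_\H^2\,\eta(\d\x)$ and then apply Lemma~\ref{lem4.5} to extract the rate $C\e^{8\delta-2}$. Your appeal to $\eta(\{\|\x\|_\V>\e^{-1}\})\to 0$ plus dominated convergence is insufficient without a uniform pointwise (or weighted) majorant for $\|\D_\x\varphi_\e(\x)\|_\H^2$.

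A secondary point: the paper works entirely in $\L^2(\H;\eta)$ in Step~4, proving directly that $\varphi_\e\in\D(\overline{\mathcal{N}_0})$ there (via \cite[Claim~1, p.~171]{gdp1}) and computing the remainders against $\eta$. Your route through $\L^2(\H;\eta_\e)$ with $\eta_\e\rightharpoonup\eta$ would require additional uniform-integrability arguments to transfer $\L^2$-convergence across measures, which you have not supplied.
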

\begin{proof}
	The proof of the theorem is divided into the following steps:
	\vskip 0.1 cm
	\noindent
	\textbf{Step 1:}
	Applying the infinite-dimensional  It\^o formula to the process $\varphi(\X(t,\x)=\varphi(\X(t)),$ we	have
	\begin{align}\label{450}
		\d\varphi(\X(t))&=\langle \D_{\x}\varphi(\X(t)),\d\X(t)\rangle+\frac{1}{2}\Tr(\Q\D^2_{\x}\varphi(\x)).
	\end{align}
	Consequntly, 
	\begin{align}\label{451}
		\P_t\varphi(\x)&=\E\left[\varphi(\X(t,\x))\right]\nonumber\\&=\varphi(\x)-\E\left[\int_0^t\langle[\mu \A\X(s)+\alpha\X(s)+\B(\X(s))+\beta\mathcal{C}(\X(s))],\D_{\x}\varphi(\X(s))\rangle\d s\right]\nonumber\\&\quad+\frac{1}{2}\E\left[\int_0^t\Tr(\Q\D^2_{\x}\varphi(\X(s)))\d s\right]\nonumber\\&=\varphi(\x)+\E\left[\int_0^t\mathcal{N}_0\varphi(\X(s,\x))\d s\right]. 
	\end{align}
	
	\vskip 0.1 cm
	\noindent
	\textbf{Step 2:} \emph{$\mathcal{N}_2$ is an extension of $\mathcal{N}_0$.} 
	We start by proving that $\mathcal{N}_2\varphi=\mathcal{N}_0\varphi$ for all $\varphi\in\mathscr{E}_{\A}(\H)$. In fact, for any $\varphi\in\mathscr{E}_{\A}(\H)$, we have from \eqref{451}
	\begin{align}\label{448}
		\E\left[\varphi(\X(t,\x))\right]&=\varphi(\x)+\E\left[\int_0^t\mathcal{N}_0\varphi(\X(s,\x))\d s\right],\ t\geq 0,\ \x\in\H,
	\end{align}
where $\mathcal{N}_0$ is defined in \eqref{4p5}. 
Since $\mathcal{N}_2$  is the	infinitesimal generator of $\P_t$, for all $\varphi\in\mathscr{E}_{\A}(\H)$, we infer
\begin{align}
	\mathcal{N}_2\varphi(\x)=\lim\limits_{t\to 0}\frac{1}{t}\left(\P_t\varphi(\x)-\varphi(\x)\right)=\lim\limits_{t\to 0}\frac{1}{t}\E\left[\int_0^t \mathcal{N}_0\varphi(\X(s,\x))\d s\right]= \mathcal{N}_0\varphi(\x),\ \x\in\H,
\end{align}
pointwise. We will now demonstrate that $\varphi\in\D(\mathcal{N}_2)$. It is enough to show that
\begin{align}
	\frac{1}{t}(\mathrm{P}_t\varphi-\varphi)
\end{align}
is equibounded in $\L^2(\H;\eta)$ for $t\in(0,1]$. It is clear that for any $\varphi\in\mathscr{E}_{\A}(\H)$ there exists constants $C_1$ and $C_2$ depending on $\varphi$ such that 
\begin{align}\label{kolm}
	\left|\frac{1}{2}\Tr\left[\Q\D_{\x}^2\varphi(\x)\right]-(\alpha\x,\D_{\x}\varphi(\x))\right|\leq C_1+C_2\|\x\|_{\H}, \ \x\in\H,
\end{align}
see, for instance, \cite[Chapter 2, pp. 38]{gdp1}. In view of \eqref{451} and \eqref{kolm}, we have in fact 
\begin{align}
&	|\mathrm{P}_t\varphi(\x)-\varphi(\x)|\nonumber\\&=\left|\E\left[\int_0^t \mathcal{N}_0\varphi(\X(s,\x))\d s\right]\right|\nonumber\\&\leq\int_0^t\E\left[C_1+C_2\|\X(s,\x)\|_{\H}+\|\varphi\|_1\left(\mu\|\A\X(s,\x)\|_{\H}+\|\B(\X(s,\x))\|_{\H}+\beta\|\mathcal{C}(\X(s,\x))\|_{\H}\right)\right]\d s,
\end{align}
for any $\x\in\H$. An application of H\"older's inequality yields 
{\small\begin{align}
&	|\mathrm{P}_t\varphi(\x)-\varphi(\x)|^2\nonumber\\&\leq t\int_0^t\left\{\E\left[C_1+C_2\|\X(s,\x)\|_{\H}+\|\varphi\|_1\left(\mu\|\A\X(s,\x)\|_{\H}+\|\B(\X(s,\x))\|_{\H}+\beta\|\mathcal{C}(\X(s,\x))\|_{\H}\right)\right]\right\}^2\d s \nonumber\\&\leq  Ct\int_0^t\left[C_1^2+C_2^2\E\|\X(s,\x)\|_{\H}^2+\|\varphi\|_{1}^2\left(\mu^2\E\|\A\X(s,\x)\|_{\H}^2+\E\|\B(\X(s,\x))\|_{\H}^2+\beta^2\E\|\mathcal{C}(\X(s,\x))\|_{\H}^2\right)\right]\d s\nonumber\\&=  Ct\int_0^t\left[C_1^2+C_2^2\mathrm{P}_s(\|\cdot\|_{\H}^2(\x))+\|\varphi\|_{1}^2\left(\mu^2\mathrm{P}_s(\|\A\cdot\|_{\H}^2(\x))+\mathrm{P}_s(\|\B(\cdot)\|_{\H}^2(\x))+\beta^2\mathrm{P}_s(\|\mathcal{C}(\cdot)\|_{\H}^2(\x))\right)\right]\d s.
\end{align}}
Integrating with respect to $\eta$ over $\H$ and taking into account the invariance of $\eta$, we derive 
\begin{align}\label{traq}
\|\mathrm{P}_t\varphi-\varphi\|_{\L^2(\H;\eta)}^2\leq Ct^2\int_{\H}\left[C_1^2+C_2^2\|\x\|_{\H}^2+\mu^2\|\A\x\|_{\H}^2+\|\B(\x)\|_{\H}^2+\beta^2\|\mathcal{C}(\x)\|_{\H}^2\right]\d\eta<+\infty, 
\end{align}
by using \eqref{4p44}. Therefore, from Lemma \ref{lem5.2}, it is clear that $\mathcal{N}_2\varphi\in\L^2(\H;\eta)$, and  by \eqref{448} and the dominated convergence theorem, one can deduce that 
\begin{align}
	\mathcal{N}_2\varphi=\lim\limits_{t\to 0}\frac{1}{t}\left(\P_t\varphi-\varphi\right)=\mathcal{N}_0\varphi \ \text{ exists in }\ \L^2(\H;\eta). 
\end{align}
	Therefore, for any  $\varphi\in\mathscr{E}_{\A}(\H)$, we have $\varphi\in\D(\mathcal{N}_2)$ and $	\mathcal{N}_2\varphi=\mathcal{N}_0\varphi ,$ hence $\mathcal{N}_2$ extends $\mathcal{N}_0$. 
	\vskip 0.1 cm
	\noindent
	\textbf{Step 3:} \emph{$\mathcal{N}_0$ is closable.} 
    Since $\mathcal{N}_2$ is dissipative (see \eqref{453} below), therefore, by definition \eqref{dissp}, we have
	\begin{align*}
		\|\varphi\|_{\H}\leq\frac{1}{\lambda}\|\lambda\varphi-\mathcal{N}_2\varphi\|_{\H} \ \text{ for all }\  \varphi\in\D(\mathcal{N}_2).
	\end{align*}
Using the fact that $\mathcal{N}_2$ is an extension $\mathcal{N}_0$, the above inequality implies that
\begin{align*}
		\|\varphi\|_{\H}\leq\frac{1}{\lambda}\|\lambda\varphi-\mathcal{N}_0\varphi\|_{\H}\  \text{ for all }\  \varphi\in\mathscr{E}_{\A}(\H),
\end{align*}
	and hence $\mathcal{N}_0$ is dissipative. Moreover, since every disspative operator is closable, so is $\mathcal{N}_0$. 
	\vskip 0.1 cm
	\noindent
	\textbf{Step 4:} \emph{Closure of $\mathcal{N}_0$ coincides with $\mathcal{N}_2$.} 
    Since $\mathcal{N}_0$ is closable, therefore from \cite[Chapter 1, Lemma 1.8]{EBD}, $\mathcal{N}_0$ has a \emph{least closed extension} called its \emph{closure}. Let us denote it by $\overline{\mathcal{N}_0}$.
    Our main aim is to show that $\overline{\mathcal{N}_0}=\mathcal{N}_2$. In order to do this, we consider the approximating system \eqref{4.33} and the invariant measure $\eta$  for $\P_t$.

    Let $f\in\C_b^1(\H)$ and write
	\begin{align*}
	(\mathcal{N}_{\e}\varphi)(\x)&=\frac{1}{2}\Tr\left[\Q\D_{\x}^2\varphi(\x)\right]-(\mu\A\x+\alpha\x+\B_{\e}(\x)+\beta\mathcal{C}_{\e}(\x),\D_{\x}\varphi(\x))\nonumber\\&=(\mathcal{N}_0\varphi)(\x)-(\B_{\e}(\x)-\B(\x),\D_{\x}\varphi(\x))-\beta(\mathcal{C}_{\e}(\x)-\mathcal{C}(\x),\D_{\x}\varphi(\x)),
\end{align*}
for $\varphi\in\mathscr{E}_{\A}(\H).$  Since, both $\B_{\e}(\cdot)$ and $\mathcal{C}_\e(\cdot)$ are bounded and regular, there exists a unique solution  $\varphi_{\e}\in\D(\mathcal{N}_{\e})\cap\C_b^1(\H)$ of the approximating equation
	\begin{align*}
		\lambda\varphi_{\e}-\mathcal{N}_{\e}\varphi_{\e}=f 
	\end{align*}
	given by the resolvent formula (\cite[Theorem 3.21, Step 1]{gdp1}), 
	\begin{align*}
		\varphi_{\e}(\x)=\int_0^{\infty}e^{-\lambda t}\E\left[f(\X_{\e}(t,\x))\right]\d t,
	\end{align*}
where $\X_\e(\cdot)$ is a solution to the problem \eqref{4.33}. Therefore, $\varphi_{\e}$ is differentiable in the direction of $\boldsymbol{h}\in\H$ and it is immediate that 
	\begin{align*}
		(\D_{\x}\varphi_{\e}(\x),\boldsymbol{h})=\int_0^{\infty}e^{-\lambda t}\E\left[(\D_{\x}f(\X_{\e}(t,\x)),\boldsymbol{\xi}_{\e}^{\boldsymbol{h}}(t,\x))\right]\d t,
	\end{align*}
	where $\boldsymbol{\xi}_{\e}^h(t,\x)=\D_{\x}\X_{\e}(t,\x)\boldsymbol{h}$. By Lemma \ref{lem4.4} (see \eqref{4p20}), it follows that there exist $\kappa,\delta>0$  such that
	\begin{align*}
		|(\D_{\x}\varphi_{\e}(\x),\boldsymbol{h})|&\leq \int_0^{\infty}e^{-\lambda t}\left\{\E\left[\|\D_{\x}f(\X_{\e}(t,\x))\|_{\H}^2\right]\right\}^{1/2}\E\left\{\left[\|\boldsymbol{\xi}_{\e}^{\boldsymbol{h}}(t,\x)\|_{\H}^2\right]\right\}^{1/2}\d t\nonumber\\&\leq \frac{1}{\lambda}\|\boldsymbol{h}\|_{\H}e^{\kappa\|\x\|_{\H}^2}e^{-\delta t}\|f\|_1, \ \text{ for all }\  \boldsymbol{h},\x\in\H. 
	\end{align*}
	where we have used the Cauchy-Schwarz and H\"older's inequalities. Using the arbitrariness of $\boldsymbol{h}\in\H$, we also have 
	\begin{align}\label{bism}
		\|\D_{\x}\varphi_{\e}(\x)\|_{\H}\leq \frac{1}{\lambda}e^{\kappa\|\x\|_{\H}^2}e^{-\delta t}\|f\|_1, \  \text{ for all }\ \x\in\H. 
	\end{align}
	Then arguing similarly as in  \cite[Claim 1, pg. 171]{gdp1}, we have $\varphi_{\e}\in\D(\overline{\mathcal{N}_0})$ and 
	\begin{align}\label{bece}
		\lambda\varphi_{\e}-\overline{\mathcal{N}_0}\varphi_{\e}=(\B_{\e}(\x)-\B(\x),\D_{\x}\varphi_{\e}(\x))+
		\beta(\mathcal{C}_{\e}(\x)-\mathcal{C}(\x),\D_{\x}\varphi_{\e}(\x))+f. 
	\end{align}
	Now, our aim is to show that 
	\begin{align}\label{457}
		\lim\limits_{\e\to 0}(\B_{\e}(\x)-\B(\x),\D_{\x}\varphi_{\e}(\x))=0\ \text{ in }\ \L^2(\H;\eta)
	\end{align}
and 
\begin{align}\label{4p57}
	\lim\limits_{\e\to 0}(\mathcal{C}_{\e}(\x)-\mathcal{C}(\x),\D_{\x}\varphi_{\e}(\x))=0\ \text{ in }\ \L^2(\H;\eta). 
\end{align}
 By using the Cauchy-Schwarz inequality, \eqref{modiB} and \eqref{bism}, we note that
	\begin{align*}
	&\int_{\H}\left|(\B_{\e}(\x)-\B(\x),\D_{\x}\varphi_{\e}(\x))\right|^2\eta(\d\x)\nonumber\\&=\int_{\{\|\x\|_{\V}\geq\e^{-1}\}}\left|(\B_{\e}(\x)-\B(\x),\D_{\x}\varphi_{\e}(\x))\right|^2\eta(\d\x)\nonumber\\&\leq\int_{\{\|\x\|_{\V}\geq\e^{-1}\}}\left|\frac{1-\e^2\|\x\|_{\V}^2}{\e^2\|\x\|_{\V}^2}\right|\|\B(\x)\|_{\H}^2\|\D_{\x}\varphi_{\e}(\x)\|_{\H}^2\eta(\d\x)\nonumber\\&\leq C e^{-\delta t}\|f\|_1^2\int_{\{\|\x\|_{\V}\geq\e^{-1}\}}e^{\kappa\|\x\|_{\H}^2}\|\B(\x)\|_{\H}^2\eta(\d\x),
	\end{align*}
	so that \eqref{457} follows if 
	\begin{align}\label{4p59}
		\lim\limits_{\e\to 0}\int_{\{\|\x\|_{\V}\geq\e^{-1}\}}e^{\kappa\|\x\|_{\H}^2}\|\B(\x)\|_{\H}^2\eta(\d\x)=0. 
	\end{align}
	Using the estimates \eqref{435} and \eqref{436}, we find 
	\begin{align}\label{beb1}
		\|\B(\x)\|_{\H}\leq\|\x\|_{\wi\L^{\infty}}\|\x\|_{\V}\leq C\|\A^{\delta}\x\|_{\H}^{4\delta}\|\A^{\delta+\frac{1}{2}}\x\|_{\H}^{2(1-2\delta)},
	\end{align}
	for all $\x\in\D(\A^{\delta+\frac{1}{2}})$. 
 Along with Lemma \ref{lem4.5} and the estimate \eqref{fracA}, the expression \eqref{beb1} yields
\begin{align}\label{beb2}
		&\int_{\{\|\x\|_{\V}\geq\e^{-1}\}}e^{\kappa\|\x\|_{\H}^2}\|\B(\x)\|_{\H}^2\eta(\d\x)\nonumber\\&\leq
	C\int_{\{\|\x\|_{\V}\geq\e^{-1}\}}e^{\kappa\|\x\|_{\H}^2}\|\A^{\delta}\x\|_{\H}^{8\delta}\|\A^{\delta+\frac{1}{2}}\x\|_{\H}^{4(1-2\delta)}\eta(\d\x)\nonumber\\&
	\nonumber\\&\leq C\uplambda_1^{\delta(2-8\delta)}\int_{\{\|\x\|_{\V}\geq\e^{-1}\}}e^{\kappa\|\x\|_{\H}^2}\|\A^{\delta}\x\|_{\H}^{8\delta}\frac{\|\A^{\delta+\frac{1}{2}}\x\|_{\H}^2}{\|\x\|_{\V}^{8\delta-2}}\eta(\d\x)
	\nonumber\\&\leq C\uplambda_1^{\delta(2-8\delta)}\e^{8\delta-2}\int_{\H}e^{\kappa\|\x\|_{\H}^2}\|\A^{\delta}\x\|_{\H}^{8\delta}\|\A^{\delta+\frac{1}{2}}\x\|_{\H}^{2}\eta(\d\x)\nonumber\\&\leq C\e^{8\delta-2},
\end{align}
provided $\frac{1}{4}<\delta<\frac{1}{2}$, and thus \eqref{4p59} follows by an application of the Dominated convergence theorem. Let us now calculate by using \eqref{modiC} and \eqref{bism}
	\begin{align*}
&	\int_{\H}\left|(\mathcal{C}_{\e}(\x)-\mathcal{C}(\x),\D_{\x}\varphi_{\e}(\x))\right|^2\eta(\d\x)\nonumber\\&=\int_{\{\|\x\|_{\V}\geq\e^{-1}\}}\left|(\mathcal{C}_{\e}(\x)-\mathcal{C}(\x),\D_{\x}\varphi_{\e}(\x))\right|^2\eta(\d\x)\nonumber\\&\leq\int_{\{\|\x\|_{\V}\geq\e^{-1}\}}\left|\frac{1-\e^{r+1}\|\x\|_{\V}^{r+1}}{\e^{r+1}\|\x\|_{\V}^{r+1}}\right|\|\mathcal{C}(\x)\|_{\H}^2\|\D_{\x}\varphi_{\e}(\x)\|_{\H}^2\eta(\d\x)\nonumber\\&\leq C e^{-\delta t}\|f\|_1^2\int_{\{\|\x\|_{\V}\geq\e^{-1}\}} e^{\kappa\|\x\|_{\H}^2}\|\mathcal{C}(\x)\|_{\H}^2\eta(\d\x),
\end{align*}
so that \eqref{4p57} follows if 
\begin{align}\label{4pp57}
	\lim\limits_{\e\to 0}\int_{\{\|\x\|_{\V}\geq\e^{-1}\}}e^{\kappa\|\x\|_{\H}^2} \|\mathcal{C}(\x)\|_{\H}^2 \eta(\d\x)=0. 
\end{align}
We calculate for $r=2$ by using \eqref{435}, Ladyzhenskaya's and Poincar\'e's inequlaities
\begin{align*}
	\|\mathcal{C}(\x)\|_{\H}\leq\|\x\|_{\wi\L^4}^2\leq\sqrt{2}\|\x\|_{\H}\|\x\|_{\V}\leq \frac{\sqrt{2}}{\uplambda_1}\|\x\|_{\V}^2\leq\frac{\sqrt{2}}{\uplambda_1}\|\A^{\delta}\x\|_{\H}^{4\delta}\|\A^{\delta+\frac{1}{2}}\x\|_{\H}^{2(1-2\delta)}.
\end{align*}
Then proceeding in a similar way as we did in \eqref{beb1}, one can conclude \eqref{4pp57}  provided $\frac{1}{4}<\delta<\frac{1}{2}$.
Along with Lemma \ref{lem4.5}, estimates \eqref{wtm3}, \eqref{beb2} and \eqref{fracA1}, it follows  for $r=3$ that
\begin{align*}
	&\int_{\{\|\x\|_{\V}\geq\e^{-1}\}}e^{\kappa\|\x\|_{\H}^2}\|\mathcal{C}(\x)\|_{\H}^2\eta(\d\x)\nonumber\\&\leq
	C\int_{\{\|\x\|_{\V}\geq\e^{-1}\}}e^{\kappa\|\x\|_{\H}^2}\|\x\|_{\V}^4\|\x\|_{\H}^2\eta(\d\x)\nonumber\\&\leq \frac{C}{\uplambda_1^{2\delta}} \int_{\{\|\x\|_{\V}\geq\e^{-1}\}}e^{\kappa\|\x\|_{\H}^2}\|\A^{\delta}\x\|_{\H}^{8\delta} \|\A^{\delta+\frac{1}{2}}\x\|_{\H}^{4-8\delta}\|\A^{\delta}\x\|_{\H}^2\eta(\d\x)\nonumber\\&\leq\frac{C}{\uplambda_1^{2\delta}}
	\int_{\{\|\x\|_{\V}\geq\e^{-1}\}}e^{\kappa\|\x\|_{\H}^2}\|\A^{\delta}\x\|_{\H}^{8\delta+2}\frac{\|\A^{\delta+\frac{1}{2}}\x\|_{\H}^2}{\|\A^{\delta+\frac{1}{2}}\x\|_{\H}^{8\delta-2}}\eta(\d\x)\nonumber\\&\leq C\uplambda_1^{\delta(2-8\delta)}\int_{\{\|\x\|_{\V}\geq\e^{-1}\}}e^{\kappa\|\x\|_{\H}^2}\|\A^{\delta}\x\|_{\H}^{8\delta+2}\frac{\|\A^{\delta+\frac{1}{2}}\x\|_{\H}^2}{\|\x\|_{\V}^{8\delta-2}}\eta(\d\x)
	\nonumber\\&\leq C\uplambda_1^{\delta(2-8\delta)}\e^{8\delta-2}\int_{\H}e^{\kappa\|\x\|_{\H}^2}\|\A^{\delta}\x\|_{\H}^{8\delta+2}\|\A^{\delta+\frac{1}{2}}\x\|_{\H}^{2}\eta(\d\x)\nonumber\\&\leq C\e^{8\delta-2},
\end{align*}
provided $\frac{1}{4}<\delta<\frac{1}{2}$. Therefore, finally, from \eqref{bece}, we have 
	\begin{align*}
		\lambda\varphi_{\e}-\overline{\mathcal{N}_0}\varphi_{\e}\to\f \ \text{ as }\ \e\to 0\ \text{ in }\ \L^2(\H;\eta).
	\end{align*}
	Hence we deduce that the closure of the range of $\lambda-\overline{\mathcal{N}_0}$ is dense in $\L^2(\H;\eta)$ and so, in view of the Lumer-Phillips Theorem (see \cite[Theorem 3.20]{gdp1}), $\overline{\mathcal{N}_0}$ is $m$-dissipative. Since $\mathcal{N}_2$ is $m$-dissipative (as it is the infinitesimal generator of a strongly continuous semigroup of contractions) and it extends $\mathcal{N}_0$ also, thus $\mathcal{N}_2$ must coincides with $\overline{\mathcal{N}_0}$ as claimed. 
\end{proof}

\subsection{The ``Carre du Champ's" identity} Let us now address some of the implications of the main result Theorem \ref{thm4.7}, particularly the integration by parts formula, also called \emph{identit\'e du carr\'e du champ}. The following identity is straightforward:
\begin{align*}
	\mathcal{N}_0(\varphi^2)=2\varphi \mathcal{N}_0\varphi+\|\sqrt{\Q}\D_{\x}\varphi\|_{\H}^2 \ \text{ for all } \varphi\in\mathscr{E}_{\A}(\H). 
\end{align*} 
By exploiting the invariance of $\eta$ and integrating the aforementioned identity with respect to $\eta$ over $\H$, we obtain
\begin{align}\label{carre}
		\int_{\H}\mathcal{N}_0\varphi(\x)\varphi(\x)\eta(\d \x)=-\frac{1}{2}\int_{\H}\|\sqrt{\Q}\D_{\x}\varphi(\x)\|_{\H}^2\eta(\d\x).
\end{align}
Therefore, $\mathcal{N}_0$ is dissipative, and hence it is closable in $\L^2(\H;\eta)$, since $\mathscr{E}_{\A}(\H)$ is dense in $\L^2(\H;\eta)$. Let us now discuss the infinitesimal generator $\mathcal{N}_2$ of the semigroup $\{\mathrm{P}_t\}_{t\geq 0}$. We endow the domain $\D(\mathcal{N}_2)$ of $\mathcal{N}_2$ with the following graph norm:
\begin{align}
	\|\varphi\|_{\D(\mathcal{N}_2)}^2=\|\varphi\|_{\L^2(\H;\eta)}^2
	+\|\mathcal{N}_2\varphi\|_{\L^2(\H;\eta)}^2, \ \varphi\in\D(\mathcal{N}_2). 
\end{align}
Since $\mathscr{E}_{\A}(\H)$ is a core of $\mathcal{N}_2$ (Theorem \ref{thm4.7}), one can extend the identity \eqref{carre} to $\D(\mathcal{N}_2)$, which is stated below. 
\begin{lemma}\label{carredu}
	The operator $\Q^{\frac{1}{2}}\D_{\x}\varphi$ defined in $\mathscr{E}_{\A}(\H)$, is uniquely extendible	to a linear bounded operator from $\D(\mathcal{N}_2)$ into $\mathbb{L}^2(\H,\eta;\H)$. The extension is still denoted by
	$\Q^{\frac{1}{2}}\D_{\x}\varphi$. Moreover, we have the following ``Carre du Champ's" identity:
	\begin{align}\label{453}
		\int_{\H}\mathcal{N}_2\varphi(\x)\varphi(\x)\eta(\d \x)=-\frac{1}{2}\int_{\H}\|\sqrt{\Q}\D_{\x}\varphi(\x)\|_{\H}^2\eta(\d\x)\ \text{ for all }\ \varphi\in\D(\mathcal{N}_2)
	\end{align}
	and 
	\begin{align}
		\|\Q^{\frac{1}{2}}\D_{\x}\varphi\|_{\mathbb{L}^2(\H,\eta;\H)}\leq \|\varphi\|_{\D(\mathcal{N}_2)}\ \text{ for all }\ \varphi\in\D(\mathcal{N}_2). 
	\end{align}
\end{lemma}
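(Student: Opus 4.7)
The plan is to promote the Carr\'e du Champ identity \eqref{carre} from $\mathscr{E}_{\A}(\H)$ to $\D(\mathcal{N}_2)$ by a closure argument, leveraging the fact that Theorem \ref{thm4.7} guarantees $\mathscr{E}_{\A}(\H)$ is a core for $\mathcal{N}_2$. The pivotal ingredient will be an a priori estimate that controls $\Q^{1/2}\D_{\x}\varphi$ in $\mathbb{L}^2(\H,\eta;\H)$ purely in terms of the graph norm of $\mathcal{N}_2$, which makes the extension automatic.

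First, for $\varphi\in\mathscr{E}_{\A}(\H)$, I would rewrite \eqref{carre} using $\mathcal{N}_0\varphi=\mathcal{N}_2\varphi$ (Step~2 of the proof of Theorem~\ref{thm4.7}) and then apply Cauchy--Schwarz in $\mathrm{L}^2(\H;\eta)$ together with $2ab\leq a^2+b^2$:
\begin{align*}
\|\Q^{1/2}\D_{\x}\varphi\|_{\mathbb{L}^2(\H,\eta;\H)}^2
&= -2\int_{\H}\mathcal{N}_2\varphi(\x)\,\varphi(\x)\,\eta(\d\x)\\
&\leq 2\|\mathcal{N}_2\varphi\|_{\mathrm{L}^2(\H;\eta)}\|\varphi\|_{\mathrm{L}^2(\H;\eta)}
\leq \|\varphi\|_{\D(\mathcal{N}_2)}^2.
\end{align*}
Hence the linear map $\mathscr{E}_{\A}(\H)\ni\varphi\mapsto \Q^{1/2}\D_{\x}\varphi\in\mathbb{L}^2(\H,\eta;\H)$ is bounded when $\mathscr{E}_{\A}(\H)$ is equipped with the graph norm of $\mathcal{N}_2$.

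Next, given any $\varphi\in\D(\mathcal{N}_2)$, Theorem~\ref{thm4.7} produces a sequence $\{\varphi_n\}\subset\mathscr{E}_{\A}(\H)$ with $\varphi_n\to\varphi$ and $\mathcal{N}_0\varphi_n\to\mathcal{N}_2\varphi$ in $\mathrm{L}^2(\H;\eta)$. Applying the above estimate to $\varphi_n-\varphi_m$ shows that $\{\Q^{1/2}\D_{\x}\varphi_n\}$ is Cauchy in $\mathbb{L}^2(\H,\eta;\H)$, and I would define $\Q^{1/2}\D_{\x}\varphi$ as its limit. The same estimate applied to the difference of two candidate approximating sequences shows the limit is independent of the choice, so the extension is well defined and unique, and the inherited bound $\|\Q^{1/2}\D_{\x}\varphi\|_{\mathbb{L}^2(\H,\eta;\H)}\leq\|\varphi\|_{\D(\mathcal{N}_2)}$ is immediate.

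Finally, I would pass to the limit in identity \eqref{carre} written for each $\varphi_n$: the left-hand side converges by the $\mathrm{L}^2(\H;\eta)$ convergences $\mathcal{N}_0\varphi_n\to\mathcal{N}_2\varphi$ and $\varphi_n\to\varphi$, while the right-hand side converges by the very construction of $\Q^{1/2}\D_{\x}\varphi$. This yields \eqref{453} on all of $\D(\mathcal{N}_2)$. No real obstacle is expected here: once the essential $m$-dissipativity in Theorem~\ref{thm4.7} is in hand, the entire argument is a standard density/closure manipulation resting on the single a priori estimate derived on the core, and the quantitative bound in the statement is not a separate issue but is the very estimate used to perform the extension.
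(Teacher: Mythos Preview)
Your proposal is correct and follows exactly the approach the paper indicates: the paper's own ``proof'' is simply a citation to \cite[Proposition 4.1]{VBGD}, but the text immediately preceding the lemma already spells out that the argument is to use \eqref{carre} on the core $\mathscr{E}_{\A}(\H)$ and extend by density via Theorem~\ref{thm4.7}. Your derivation of the a priori bound from \eqref{carre} via Cauchy--Schwarz, the Cauchy-sequence/well-definedness argument, and the passage to the limit are precisely the standard details behind that citation.
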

\begin{proof}
	See \cite[Proposition 4.1]{VBGD}. 
\end{proof}
Note that we are not able to prove that the operator $\Q^{\frac{1}{2}}\D_{\x}\varphi$  defined in $\mathscr{E}_{\A}(\H)$
is closable in $\mathbb{L}^2(\H,\eta)$.  Let us state prove  some estimates for the resolvent of $\mathcal{N}_2$.
\begin{lemma}\label{lem4.10}
	Let $f\in\mathbb{L}^2(\H,\eta)$, $\lambda>0$ and set $\varphi=(\lambda\I-\mathcal{N}_2)^{-1}f$. Then we have 
	\begin{align}
		\|\varphi\|_{\mathbb{L}^2(\H,\eta)}\leq \frac{1}{\lambda}\|f\|_{\mathbb{L}^2(\H,\eta)}\ \text{ and }\  \|\Q^{\frac{1}{2}}\D_{\x}\varphi\|_{\mathbb{L}^2(\H,\eta;\H)}\leq \sqrt{\frac{2}{\lambda}}\|f\|_{\mathbb{L}^2(\H,\eta)}.
	\end{align}
\end{lemma}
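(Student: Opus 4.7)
The plan is to deduce both estimates from the fact that $\mathcal{N}_2$ generates a strongly continuous semigroup of contractions in $\mathbb{L}^2(\H,\eta)$ together with the Carr\'e du Champ identity \eqref{453} established in Lemma \ref{carredu}.

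\textbf{First estimate.} Since $\{\P_t\}_{t\geq 0}$ is a strongly continuous semigroup of contractions on $\mathbb{L}^2(\H,\eta)$ (as noted in the extension \eqref{extd}), the Hille--Yosida theorem yields that $\mathcal{N}_2$ is $m$-dissipative. Consequently, for every $\lambda>0$, the resolvent $(\lambda\I-\mathcal{N}_2)^{-1}$ is a bounded operator on $\mathbb{L}^2(\H,\eta)$ with norm at most $1/\lambda$. This immediately gives $\|\varphi\|_{\mathbb{L}^2(\H,\eta)}\leq \frac{1}{\lambda}\|f\|_{\mathbb{L}^2(\H,\eta)}$.

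\textbf{Second estimate.} Since $\varphi=(\lambda\I-\mathcal{N}_2)^{-1}f\in\D(\mathcal{N}_2)$, we have $\lambda\varphi-\mathcal{N}_2\varphi=f$ in $\mathbb{L}^2(\H,\eta)$. Multiplying this identity by $\varphi$ and integrating against $\eta$ over $\H$, we obtain
\begin{align*}
\lambda\int_{\H}\varphi^2(\x)\eta(\d\x)-\int_{\H}\mathcal{N}_2\varphi(\x)\,\varphi(\x)\eta(\d\x)=\int_{\H}f(\x)\varphi(\x)\eta(\d\x).
\end{align*}
Invoking the Carr\'e du Champ identity \eqref{453} from Lemma \ref{carredu}, which is valid since $\varphi\in\D(\mathcal{N}_2)$, the second term on the left becomes $\frac{1}{2}\|\Q^{1/2}\D_{\x}\varphi\|_{\mathbb{L}^2(\H,\eta;\H)}^2$, so that
\begin{align*}
\lambda\|\varphi\|_{\mathbb{L}^2(\H,\eta)}^2+\frac{1}{2}\|\Q^{1/2}\D_{\x}\varphi\|_{\mathbb{L}^2(\H,\eta;\H)}^2=\int_{\H}f(\x)\varphi(\x)\eta(\d\x).
\end{align*}

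\textbf{Conclusion.} Applying the Cauchy--Schwarz inequality to the right-hand side and then using the first estimate to bound $\|\varphi\|_{\mathbb{L}^2(\H,\eta)}\leq \frac{1}{\lambda}\|f\|_{\mathbb{L}^2(\H,\eta)}$, we get
\begin{align*}
\frac{1}{2}\|\Q^{1/2}\D_{\x}\varphi\|_{\mathbb{L}^2(\H,\eta;\H)}^2\leq \|f\|_{\mathbb{L}^2(\H,\eta)}\|\varphi\|_{\mathbb{L}^2(\H,\eta)}\leq \frac{1}{\lambda}\|f\|_{\mathbb{L}^2(\H,\eta)}^2,
\end{align*}
from which the claimed bound $\|\Q^{1/2}\D_{\x}\varphi\|_{\mathbb{L}^2(\H,\eta;\H)}\leq \sqrt{2/\lambda}\,\|f\|_{\mathbb{L}^2(\H,\eta)}$ follows upon rearranging and taking square roots. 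There is no real obstacle here: both estimates are direct consequences of results already proved, and the only subtlety is ensuring that the Carr\'e du Champ identity applies to $\varphi\in\D(\mathcal{N}_2)$, which is precisely the content of Lemma \ref{carredu}.
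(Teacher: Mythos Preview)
Your proof is correct. The paper itself does not give a proof of this lemma, only citing \cite[Proposition 4.2]{VBGD}; your argument is precisely the standard one, and in fact the paper reproduces essentially the same computation later in Subsection~\ref{HJBeqn} (see the derivation of \eqref{contr1}), where the identity $\lambda\varphi-\mathcal{N}_2\varphi=\psi$ is tested against $\varphi$ and combined with the Carr\'e du Champ identity.
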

\begin{proof}
	See \cite[Proposition 4.2]{VBGD}. 
\end{proof}
The following is a result on some perturbations of $\mathcal{N}_2$. 
\begin{lemma}\label{pertubkol}
Let $F\in \mathscr{B}_b(\H;\H)$. Consider the linear operator 
\begin{align}
	\mathcal{N}_1\varphi=\mathcal{N}_2\varphi+\langle F(\x), \Q^{\frac{1}{2}}\D_{\x}\varphi\rangle,\ \varphi\in\D(\mathcal{N}_2). 
\end{align}	
Then the resolvent set $\rho(\mathcal{N}_1)$ of $\mathcal{N}_1$ contains the half-right  $(2\|F\|_0^2,\infty)$ and
\begin{align}
	(\lambda\I-\mathcal{N}_1)^{-1}f=(\lambda\I-\mathcal{N}_2)^{-1}(\I-T_{\lambda})^{-1}f, 
\end{align}
where 
\begin{align*}
	T_{\lambda}f(\x)=\langle F(\x), \Q^{\frac{1}{2}}\D_{\x}(\lambda\I-\mathcal{N}_2)^{-1}f(\x)\rangle. 
\end{align*}
\end{lemma}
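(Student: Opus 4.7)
The plan is to apply a standard bounded perturbation argument, but adapted to the fact that the perturbation involves only the derivative in the direction of $\Q^{1/2}$, which is the natural quantity controlled by Lemmas \ref{carredu} and \ref{lem4.10}. First, I would verify that $\mathcal{N}_1$ is a well-defined operator on $\D(\mathcal{N}_2)$: for $\varphi \in \D(\mathcal{N}_2)$, by Cauchy--Schwarz in $\H$ and Lemma \ref{carredu},
\begin{align*}
\|\langle F(\cdot), \Q^{1/2}\D_{\x}\varphi(\cdot)\rangle\|_{\mathbb{L}^2(\H,\eta)} \leq \|F\|_{0}\,\|\Q^{1/2}\D_{\x}\varphi\|_{\mathbb{L}^2(\H,\eta;\H)} \leq \|F\|_{0}\,\|\varphi\|_{\D(\mathcal{N}_2)},
\end{align*}
so $\mathcal{N}_1\varphi \in \mathbb{L}^2(\H,\eta)$ and we may take $\D(\mathcal{N}_1)=\D(\mathcal{N}_2)$.

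Next, I would rewrite the resolvent equation $(\lambda\I - \mathcal{N}_1)\varphi = f$ for $f\in\mathbb{L}^2(\H,\eta)$ as
\begin{align*}
(\lambda\I - \mathcal{N}_2)\varphi - \langle F(\x), \Q^{1/2}\D_{\x}\varphi\rangle = f.
\end{align*}
Setting $\psi = (\lambda\I - \mathcal{N}_2)\varphi$, i.e., $\varphi = (\lambda\I - \mathcal{N}_2)^{-1}\psi$, the equation becomes $\psi - T_\lambda \psi = f$, where $T_\lambda$ is exactly the operator defined in the statement. The central step is the bound on $T_\lambda$ as an operator on $\mathbb{L}^2(\H,\eta)$: using Cauchy--Schwarz, the essential boundedness of $F$, and the key resolvent estimate from Lemma \ref{lem4.10}, I would obtain
\begin{align*}
\|T_\lambda f\|_{\mathbb{L}^2(\H,\eta)} \leq \|F\|_0\,\|\Q^{1/2}\D_{\x}(\lambda\I - \mathcal{N}_2)^{-1}f\|_{\mathbb{L}^2(\H,\eta;\H)} \leq \|F\|_0\sqrt{\frac{2}{\lambda}}\,\|f\|_{\mathbb{L}^2(\H,\eta)}.
\end{align*}
Hence $\|T_\lambda\|_{\mathrm{op}} \leq \|F\|_0\sqrt{2/\lambda} < 1$ precisely when $\lambda > 2\|F\|_0^2$, so $\I - T_\lambda$ is invertible on $\mathbb{L}^2(\H,\eta)$ via the Neumann series.

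Finally, it remains to check that $\varphi := (\lambda\I - \mathcal{N}_2)^{-1}(\I - T_\lambda)^{-1}f$ lies in $\D(\mathcal{N}_2)$ and actually solves $(\lambda\I - \mathcal{N}_1)\varphi = f$, which is a direct substitution, and to verify uniqueness by running the equivalence backwards: any homogeneous solution $(\lambda\I - \mathcal{N}_1)\varphi = 0$ yields $\psi := (\lambda\I-\mathcal{N}_2)\varphi$ with $(\I - T_\lambda)\psi = 0$, hence $\psi \equiv 0$ and so $\varphi \equiv 0$. The only delicate point, and thus the main obstacle, is the justification that the $\Q^{1/2}$-gradient appearing inside $T_\lambda$ is genuinely an element of $\mathbb{L}^2(\H,\eta;\H)$ rather than a merely formal expression; this is exactly what Lemmas \ref{carredu} and \ref{lem4.10} guarantee, and it is what makes the perturbation well-defined despite the absence of a full gradient bound on $\D_{\x}\varphi$.
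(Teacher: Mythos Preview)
Your proposal is correct and follows exactly the standard bounded-perturbation argument via Neumann series that the paper invokes by referring to \cite[Proposition 4.3]{VBGD}; the key ingredients you identify---the well-definedness of $\Q^{1/2}\D_{\x}$ on $\D(\mathcal{N}_2)$ from Lemma \ref{carredu} and the resolvent bound $\|\Q^{1/2}\D_{\x}(\lambda\I-\mathcal{N}_2)^{-1}f\|_{\mathbb{L}^2(\H,\eta;\H)}\leq\sqrt{2/\lambda}\,\|f\|_{\mathbb{L}^2(\H,\eta)}$ from Lemma \ref{lem4.10}---are precisely what the cited argument uses. There is nothing to add.
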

\begin{proof}
	See \cite[Proposition 4.3]{VBGD}. 
\end{proof}

\section{Applications in Control Theory: Infinite Horizon Problem} \label{sec5}\setcounter{equation}{0} 

We consider an infinite horizon problem described by  the state equation for incompressible 2D stochastic convective Brinkman-Forchheimer fluids:
\begin{equation}\label{51}
	\left\{
	\begin{aligned}
		\d\X(t)+[\mu \A\X(t)+\alpha\X(t)+\B(\X(t))+\beta\mathcal{C}(\X(t))
		]\d t&=\sqrt{\Q}\mathrm{U}(t)\d t+\sqrt{\Q}\d\W(t), \ t>0,\\
		\X(0)&=\x,
	\end{aligned}
	\right.
\end{equation}
with a cost functional of the form
\begin{align}\label{cost}
	J_{\infty}(\mathrm{U})=\E\left\{\int_0^{\infty}e^{-\lambda t}\left[f(\X(t,\x;\mathrm{U}))+h(\mathrm{U}(t))\right]\d t\right\},
\end{align}
over all adapted square integrable controls $\mathrm{U}$, where $f$ and $h$ are real valued  funtions on $\H$ and $\lambda>0$ is a discount factor. We define the admissible class of control processes as
\begin{align}\label{admis}
	\mathcal{U}_{\mathrm{ad}}:=\left\{\mathrm{U}(\cdot)\in\mathrm{L}^2(\Omega,\mathrm{L}^2(0,\infty;\H)):  \|\mathrm{U}(t)\|_{\H}\leq R, \ \mathbb{P}\text{-a.s. and } \mathrm{U}(\cdot)  \text{ is adapted to } \{\mathscr{F}_t\}_{t\geq 0} \right\},
\end{align}
where $R>0$ is fixed, corresponding to a fixed reference probability space $(\Omega,\mathscr{F},\mathbb{P})$.
We again call $(\X(\cdot),\mathrm{U}(\cdot))$ an admissible control pair if $\mathrm{U}(\cdot)$  is an $\mathscr{F}_t$-adapted process with values in $\H$ and $\X(\cdot)$ is the weak solution to \eqref{51} corresponding to $\mathrm{U}(\cdot)$. To every admissible control pair, we associate a cost \eqref{cost}. The optimal control problem is to find an admissible control $\mathrm{U}(\cdot)$ which minimizes the cost functional \eqref{cost}. We define the value function $\mathcal{V}:\H\to\mathbb{R}$ corresponding to the cost functional \eqref{cost}, as
\begin{align}\label{56}
	\mathcal{V}(\x):=\inf_{\mathrm{U}(\cdot)\in\mathcal{U}_{\mathrm{ad}}}J_{\infty}(\mathrm{U})=\inf_{\mathrm{U}\in\mathcal{U}_{\mathrm{ad}}}\E\left\{\int_0^{\infty}e^{-\lambda t}\left[f(\X(t,\x;\mathrm{U}(\cdot)))+h(\mathrm{U}(t))\right]\d t\right\}.
\end{align}
We consider the following  infinite-dimensional second order stationary HJB equation (or \emph{semilinear Kolomogorov equations}) related to the stochastic optimal control problem \eqref{51}:
\begin{align}\label{55}
\lambda\varphi(\x)-\frac{1}{2}\Tr\left[\Q\D_{\x}^2\varphi(\x)\right]+ (\mu\A\x+\alpha\x+\B(\x)+\beta\mathcal{C}(\x),\D_{\x}\varphi(\x))+g(\Q^{1/2}\D_{\x}\varphi(\x))=f(\x),
\end{align}
where $\lambda>0$, $f\in\L^2(\H;\eta)$ and the Hamiltonian $g:\H\to\R$  is Lipschitz continuous. 
Moreover, $g$ is defined as the Legendre transform of the convex function $h : \H\to\R$:
\begin{align}\label{leg}
	g(\x)=\sup\limits_{\y\in\H}\left\{(\x,\y)-h(\y)\right\},\ \x\in\H.
\end{align}

\begin{example}\label{epe}
	 For instance (cf. \cite[pp. 294]{gdp7}), one can take 
	\begin{enumerate}
		\item $h(\x)=\frac{1}{2}\|\x\|_{\H}^2$ for $\x\in\H$, so that the Hamiltonian is given by $g(\x)=\frac{1}{2}\|\x\|_{\H}^2$.
		\item for a given $R>0$, 
		\begin{align*}
			h(\x)=\left\{\begin{array}{cc}\frac{1}{2}\|\x\|_{\H}^2, &\text{ if } \|\x\|_{\H}\leq R,\\
				+\infty,  &\text{ if } \|\x\|_{\H}>R,\end{array}\right.
		\end{align*}
		so that the Hamiltonian $g(\cdot)$ is explicitly given by 
		\begin{align}\label{how1}
			g(\x)=\left\{\begin{array}{cc}\frac{1}{2}\|\x\|_{\H}^2, &\text{ if } \|\x\|_{\H}\leq R,\\
				R\|\x\|_{\H}-\frac{R^2}{2}, &\text{ if } \|\x\|_{\H}>R.\end{array}\right.
		\end{align}
	\end{enumerate}
	Moreover, for the second case, the optimal feedback control is given formally as  (see \cite{gozzi})
	\begin{align*}
		\wi{\mathrm{U}}(t)=-\mathscr{G}(\Q^{1/2}\D_{\x}\varphi(\x)),
	\end{align*}
	where $\mathscr{G}(\cdot)$ is given by
	\begin{align}\label{how2}
		\mathscr{G}(\x):=
		\begin{cases}
			\x,  &\text{ when } \|\x\|_{\H}\leq R,\\
			\frac{R}{\|\x\|_{\H}}\x, &\text{ when } \|\x\|_{\H}>R.
		\end{cases}
	\end{align} 
\end{example}

\begin{remark}\label{mildsoln}
By a solution of \eqref{55}, we mean a solution in the mild form. The mild solution is defined as the solution of the following integral equation (cf. \cite{gozzi3}):
	\begin{align*}
		\varphi(\x)=\int_0^{\infty} e^{-\lambda t} \E[f(\X(t,\x))+g(\Q^{1/2}\D_{\x}\varphi(\X(t,\x)))]\d t,
	\end{align*}
	for all $\lambda>0$ and all Lipschitz continuous functions $g:\H\to\R$.
%
\end{remark}

\subsection{Solution to Hamilton-Jacobi-Bellman (HJB) equations}\label{HJBeqn}
 We need to solve the HJB equation \eqref{55} associated with \eqref{51} to do this. 
More precisely, the solution $\varphi$ of \eqref{55} is the value function $\mathcal{V}$ defined in \eqref{56} of the optimal control problem associated with \eqref{51}.


We write \eqref{55} in following form:
\begin{align}\label{55*}
	\lambda\varphi(\x)-\mathcal{N}_2\varphi(\x)+g(\Q^{1/2}\D_{\x}\varphi(\x))=f(\x),
\end{align}
and set $\lambda\varphi-\mathcal{N}_2\varphi=\psi$ so that $\varphi=(\lambda-\mathcal{N}_2)^{-1}\psi$. Then, we have 
\begin{align}\label{contr}
	\psi=f-g(\Q^{1/2}\D_{\x}(\lambda-\mathcal{N}_2)^{-1}\psi).
\end{align}
We multiply the equation $\lambda\varphi-\mathcal{N}_2\varphi=\psi$ by $\varphi$ and integrate over $\H$ with respect to $\eta$ and use  Carre du Champ's identity to obtain
\begin{align*}
	    \lambda\int_{\H}\varphi^2\d\eta+\frac{1}{2}\int_{\H}\|\Q^{1/2}\D_{\x}\varphi\|_{\H}^2\d\eta =\int_{\H}\varphi\psi\d\eta\leq\frac{\lambda}{2}\int_{\H}\varphi^2\d\eta+\frac{1}{2\lambda}
	    \int_{\H} \psi^2\d\eta,
\end{align*} 
which in turn implies that
\begin{align}\label{contr1}
\int_{\H}\varphi^2\d\eta\leq\frac{1}{\lambda^2}\int_{\H}\psi^2\d\eta  \ \text{ and } \
	\int_{\H}\|\Q^{1/2}\D_{\x}\varphi\|_{\H}^2\d\eta\leq\frac{1}{\lambda} \int_{\H}\psi^2\d\eta .
\end{align}
Now, we take $\gamma(\psi):=f-g(\Q^{1/2}\D_{\x}(\lambda-\mathcal{N}_2)^{-1}\psi).$ Then, for any $\psi,\wi\psi\in\L^2(\H,\eta)$, we calculate 
\begin{align*}
	|\gamma(\psi)-\gamma(\wi\psi)|&\leq|g(\Q^{1/2}\D_{\x}(\lambda-\mathcal{N}_2)^{-1}\psi)-g(\Q^{1/2}\D_{\x}(\lambda-\mathcal{N}_2)^{-1}\wi\psi)|\no\\&\leq\|g\|_{\mathrm{Lip}}\|\Q^{1/2}\D_{\x}(\lambda-\mathcal{N}_2)^{-1}\psi-\Q^{1/2}\D_{\x}(\lambda-\mathcal{N}_2)^{-1}\wi\psi\|_{\H}.
\end{align*}
By using \eqref{contr1}, we deduce
\begin{align*}
	\int_{\H} |\gamma(\psi)-\gamma(\wi\psi)|^2\d\eta&\leq\|g\|_{\mathrm{Lip}}^2\int_{\H} 
	\|\Q^{1/2}\D_{\x}(\lambda-\mathcal{N}_2)^{-1}\psi-\Q^{1/2}\D_{\x}(\lambda-\mathcal{N}_2)^{-1}\wi\psi\|_{\H}^2\d\eta
	\no\\&\leq\frac{\|g\|_{\mathrm{Lip}}^2}{\lambda}\int_{\H} |\psi-\wi\psi|^2\d\eta.
\end{align*}
Therefore, by choosing  $\lambda>\|g\|_{\mathrm{Lip}}^2$, it follows that $\gamma(\cdot):\L^2(\H,\eta)\to\L^2(\H,\eta)$ is a contraction map and thus by the Banach fixed point theorem, \eqref{55} or \eqref{55*} has a unique solution $\varphi\in\L^2(\H,\eta)$.  Moreover, from \eqref{contr1}, we have $\Q^{1/2}\D_{\x}\varphi\in\L^2(\H,\eta;\H)$. 
\begin{remark}
Since $\varphi(\cdot)$ is resolvent of a (unique) $m$-dissipative operator (see \eqref{55*}-\eqref{contr}) whose resolvent set contains $(0,+\infty)$. Thus, from Remark \ref{mildsoln}, it implies that \eqref{55} has a unique mild solution for all $\lambda>0$ (cf. \cite{gozzi3}).
\end{remark}

\subsection{Existence of an optimal control}\label{existopt} Let us fix the Hamiltonian  (see example \eqref{epe})
 \begin{align*}
 	g(\x)=\left\{\begin{array}{cc}\frac{1}{2}\|\x\|_{\H}^2, &\text{ if } \ \|\x\|_{\H}\leq R,\\
 		R\|\x\|_{\H}-\frac{R^2}{2}, &\text{ if }\  \|\x\|_{\H}>R.\end{array}\right.
 \end{align*}
 We need the following lemma in the sequel:
\begin{lemma}\label{verif}
 Let $\varphi(\cdot)$ be the mild solution of the HJB equation \eqref{55}. Then, the following identity holds:
 \begin{align}\label{713}
 	\varphi(\x)+\E\left(\int_0^{\infty}\frac{e^{-\lambda t}}{2} \left[\|\mathrm{U}(t)+\Q^{1/2}\D_{\x}\varphi(\x)\|_{\H}^2 -\Psi(\|\Q^{1/2}\D_{\x}\varphi(\x)\|_{\H}-R)\right] \d t\right)=J_{\infty}(\mathrm{U}),
 \end{align}
 where 
 \begin{align*}
 	\Psi(\upxi):=
 	\begin{cases}
 		0, \ &\text{ if } \ \upxi\leq0,\\
 		\upxi^2, \ &\text{ if } \ \upxi>0.
 	\end{cases}
 \end{align*}
\end{lemma}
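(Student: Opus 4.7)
My plan is to run a standard verification argument based on It\^o's formula and the HJB equation \eqref{55}, closed by a Fenchel–Young-type completion of squares. First I would apply It\^o's formula to the discounted process $t\mapsto e^{-\lambda t}\varphi(\X(t,\x;\mathrm{U}))$ along the controlled trajectory $\X(\cdot)$ of \eqref{51}. Since the drift of \eqref{51} differs from that of \eqref{32} only by the extra term $\sqrt{\Q}\mathrm{U}(t)$, the resulting differential picks up an additional pairing $(\mathrm{U}(t),\Q^{1/2}\D_{\x}\varphi(\X(t)))$; formally,
\begin{align*}
\d\bigl(e^{-\lambda t}\varphi(\X(t))\bigr)
&=e^{-\lambda t}\bigl[-\lambda\varphi(\X(t))+\mathcal{N}_2\varphi(\X(t))+(\mathrm{U}(t),\Q^{1/2}\D_{\x}\varphi(\X(t)))\bigr]\d t\\
&\quad+e^{-\lambda t}\bigl(\sqrt{\Q}\D_{\x}\varphi(\X(t)),\d\W(t)\bigr).
\end{align*}

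I would then substitute the HJB equation \eqref{55} in the form $\mathcal{N}_2\varphi-\lambda\varphi=g(\Q^{1/2}\D_{\x}\varphi)-f$, integrate on $[0,T]$, take expectations (the stochastic term being a martingale in view of the $\mathbb{L}^2(\H,\eta;\H)$-bound on $\Q^{1/2}\D_{\x}\varphi$ from Lemma \ref{lem4.10}), and let $T\to\infty$, the exponential discount together with the $\mathrm{L}^2(\H;\eta)$-contractivity of $\mathrm{P}_t$ killing the boundary term $\E[e^{-\lambda T}\varphi(\X(T))]$. This yields
\begin{align*}
\varphi(\x)=\E\int_0^\infty e^{-\lambda t}\bigl[f(\X(t))-g(\Q^{1/2}\D_{\x}\varphi(\X(t)))-(\mathrm{U}(t),\Q^{1/2}\D_{\x}\varphi(\X(t)))\bigr]\d t.
\end{align*}
Subtracting this from the definition of $J_\infty(\mathrm{U})$ and writing $v:=\Q^{1/2}\D_{\x}\varphi(\X(t))$, the identity \eqref{713} reduces to the deterministic pointwise equality
\begin{align*}
h(\mathrm{U})+g(v)+(\mathrm{U},v)=\tfrac12\|\mathrm{U}+v\|_{\H}^2-\tfrac12\Psi(\|v\|_{\H}-R),
\end{align*}
for $\|\mathrm{U}\|_{\H}\leq R$. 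Since admissibility forces $h(\mathrm{U})=\tfrac12\|\mathrm{U}\|_{\H}^2$, a short two-case check against the explicit $g$ in \eqref{how1} finishes this algebraic step: for $\|v\|_{\H}\leq R$ both sides collapse to $\tfrac12\|\mathrm{U}+v\|_{\H}^2$ (and $\Psi=0$), while for $\|v\|_{\H}>R$ completing the square in $\tfrac12\|\mathrm{U}\|_{\H}^2+(\mathrm{U},v)$ and absorbing the affine part $R\|v\|_{\H}-\tfrac{R^2}{2}$ of $g$ produces exactly the correction $-\tfrac12(\|v\|_{\H}-R)^2$.

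The hard part will be the rigorous application of It\^o's formula, since $\varphi$ is produced only as the mild solution of \eqref{55} via the fixed-point argument in Subsection \ref{HJBeqn} (cf. Remark \ref{mildsoln}) and is not a priori in $\D(\mathcal{N}_2)$ nor smooth in the Fr\'echet sense. My plan is to regularise by Yosida-type resolvents $\varphi_n:=n(n\I-\mathcal{N}_2)^{-1}\varphi\in\D(\mathcal{N}_2)$, approximate each $\varphi_n$ further by elements of the core $\mathscr{E}_{\A}(\H)$ supplied by Theorem \ref{thm4.7}, apply It\^o on that approximate level, and then pass to the limit using $\varphi_n\to\varphi$ in $\mathrm{L}^2(\H;\eta)$ together with $\Q^{1/2}\D_{\x}\varphi_n\to\Q^{1/2}\D_{\x}\varphi$ in $\mathbb{L}^2(\H,\eta;\H)$, convergence guaranteed by Lemma \ref{carredu}. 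Once the It\^o step is justified at this level of regularity, the remainder of the proof is the purely algebraic computation outlined above, and the invariance of $\eta$ plus the discount factor take care of the terminal decay.
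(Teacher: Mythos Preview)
Your proposal is correct and follows essentially the same verification argument as the paper: apply It\^o's formula to $e^{-\lambda t}\varphi(\X(t))$ along the controlled trajectory, substitute the HJB equation \eqref{55}, take expectations and let $T\to\infty$, then complete the square case-by-case according to whether $\|\Q^{1/2}\D_{\x}\varphi\|_{\H}\lessgtr R$. The only cosmetic differences are that the paper interleaves the case split with the It\^o computation rather than isolating the algebraic identity at the end, and that for the regularity issue the paper simply assumes $\varphi\in\C_b^2(\H)$ and defers to a Galerkin approximation, whereas you propose Yosida resolvents plus the core $\mathscr{E}_{\A}(\H)$ from Theorem \ref{thm4.7}; both are standard and lead to the same conclusion.
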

\begin{proof}
 Without loss of generality, we assume that $\varphi\in\C_b^2(\H)$ (otheriwse one can proceed via Galerkin approximations, \cite[Chapter 13]{gdp7}). Since, $\X(\cdot)$ is the unique strong solution of \eqref{51}, therefore on applying the It\^o formula to the mapping $t\mapsto e^{-\lambda t}\varphi(\X(t,\x))$, we obtain for a.e. $t\in[0,T]$, $\mathbb{P}$-a.s. 
 \begin{align}\label{stpr}
&\d(e^{-\lambda t}\varphi(\X(t,\x)))\nonumber\\&=-e^{-\lambda t}(\mu\A\X(t,\x)+\alpha\X(t,\x)+\B(\X(t,\x))+ \beta\mathcal{C}(\X(t,\x)),\D_{\x}\varphi(\X(t,\x))) \no\\&\quad + (\Q^{1/2}\mathrm{U}(t),\D_{\x}\varphi(\X(t,\x)))+e^{-\lambda t}(\Q^{1/2}\d\W,\D_{\x}\varphi) \no\\&\quad+\frac{e^{-\lambda t}}{2} \Tr\left[\Q\D_{\x}^2\varphi(\X(t,\x))\right]-\lambda e^{-\lambda t}\varphi(\X(t,\x)).
\end{align}
Let us first consider the case when $\|\x\|_{\H}\leq R$. Then, from \eqref{55}, the above equality implies 
 \begin{align}\label{statio}
\d(e^{-\lambda t}\varphi(\X(t,\x)))&=e^{-\lambda t}(\Q^{1/2}\mathrm{U}(t),\D_{\x}\varphi(\x)) +e^{-\lambda t} (\Q^{1/2}\d\W,\D_{\x}\varphi)\no\\&\quad+\frac{e^{-\lambda t}}{2} \|\Q^{1/2}\D_{\x}\varphi(\x)\|_{\H}^2-e^{-\lambda t}  f(\X(t,\x))\no\\&=
\frac{e^{-\lambda t}}{2}\|\mathrm{U}(t)+\Q^{1/2}\D_{\x}\varphi(\x)\|_{\H}^2+e^{-\lambda t} (\Q^{1/2}\d\W,\D_{\x}\varphi)\no\\&\quad-\frac{e^{-\lambda t}}{2}\|\mathrm{U}(t)\|_{\H}^2-
e^{-\lambda t}  f(\X(t,\x)).
 \end{align}
By integrating with respect to $t\in[0,T]$ on each side of \eqref{statio}, taking expectation, and then  using the fact that $\int_0^{t} e^{-\lambda s}(\Q^{1/2} \d\W(s),\D_{\x}\varphi)$ is an $\mathscr{F}_t$-adapted martingale, we obtain
 \begin{align*}
 	e^{-\lambda t}\varphi(\X(t,\x))-\varphi(\x)&=-\E\bigg( \int_0^{t}e^{-\lambda s} \left[ f(\X(s,\x))+\frac{1}{2} \|\mathrm{U}(s)\|_{\H}^2\right]\d s\no\\&\quad-\int_0^{t} \frac{e^{-\lambda t}}{2} \|\mathrm{U}(s)+\Q^{1/2}\D_{\x}\varphi(\x) \|_{\H}^2\d s \bigg).
 \end{align*}
On taking the limit as $t\to\infty$, the above inequality reduces to
   \begin{align}\label{cost1}
   \varphi(\x)&=\E\bigg( \int_0^{\infty} e^{-\lambda t}\left[ f(\X(t,\x))+\frac{1}{2} \|\mathrm{U}(t)\|_{\H}^2\right]\d t-\int_0^{\infty} \frac{e^{-\lambda t}}{2}  \|\mathrm{U}(t)+\Q^{1/2}\D_{\x}\varphi(\x) \|_{\H}^2\d t \bigg)\no\\&=J_{\infty}(\mathrm{U})-
   \E\left(\int_0^{\infty} \frac{e^{-\lambda t}}{2}\|\mathrm{U}(t)+\Q^{1/2}\D_{\x}\varphi(\x)\|_{\H}^ 2 \d t\right),
	\end{align}
where we have used the definition of the cost functional $J_{\infty}(\mathrm{U})$ given in \eqref{cost}. 	

Now, when $\|\x\|_{\H}>R$, then from the definition of $g$ and using \eqref{55}, we obtain
	\begin{align*}
		\d(e^{-\lambda t}\varphi(\X(t,\x)))&=e^{-\lambda t}(\Q^{1/2}\mathrm{U}(t),\D_{\x}\varphi(\x)) +e^{-\lambda t} (\Q^{1/2}\d\W,\D_{\x}\varphi)\no\\&\quad+e^{-\lambda t} \left(R\|\Q^{1/2}\D_{\x}\varphi(\x)\|_{\H}-\frac{R^2}{2}\right)-e^{-\lambda t}  f(\X(t,\x))\no\\&=
		\frac{e^{-\lambda t}}{2}\|\mathrm{U}(t)+\Q^{1/2}\D_{\x}\varphi(\x)\|_{\H}^2 -\frac{e^{-\lambda t}}{2}(\|\Q^{1/2}\D_{\x}\varphi(\x)\|_{\H}-R)^2\no\\& \quad+e^{-\lambda t} (\Q^{1/2}\d\W,\D_{\x}\varphi)-\frac{e^{-\lambda t}}{2}\|\mathrm{U}(t)\|_{\H}^2-e^{-\lambda t}  f(\X(t,\x)).
	\end{align*}
	Subsequently, continuing as before, we ultimately have
	\begin{align}\label{cost2}
		\varphi(\x)=J_{\infty}(\mathrm{U})-\E\left(\int_0^{\infty}\frac{e^{-\lambda t}}{2} \left[\|\mathrm{U}(t)+\Q^{1/2}\D_{\x}\varphi(\x)\|_{\H}^2 -(\|\Q^{1/2}\D_{\x}\varphi(\x)\|_{\H}-R)^2\right] \d t\right).
	\end{align}
  On combining, \eqref{cost1}-\eqref{cost2}, we can arrive at \eqref{713}. 
\end{proof}  
The following result for the existence of an optimal control for the problem \eqref{56} can be proved by using  Lemma \ref{verif} and following in the similar lines of \cite[Section 13.4.2]{gdp7} (see \cite{gozzi1,gozzi2} also):
\begin{lemma}
	There exists an optimal pair $(\mathrm{U}^*(\cdot),\X^*(\cdot))$ for the problem \eqref{56} such that the following optimal feedback formula holds:
	\begin{equation*}
		\mathrm{U}^*(t)=\mathcal{G}(\Q^{1/2}\D_{\x}\varphi(\X^*(\x,t))), \ \text{ for all } \ t\geq 0 \ \text{ and } \ \x\in\H,
	\end{equation*}
	where 
		\begin{equation*}
	    \mathcal{G}(\mathfrak{p})=\D_{\mathfrak{p}} g(\mathfrak{p})=
	    \left\{
		\begin{aligned}
			-\mathfrak{p}, \ \text{ when } \|\mathfrak{p}\|_{\H}\leq R,\\
			-R\frac{\mathfrak{p}}{\|\mathfrak{p}\|_{\H}}, \ \text{ when } \|\mathfrak{p}\|_{\H}> R.
		\end{aligned}
		\right.
	\end{equation*}
	Furthermore, the optimal cost is given by $J^*_{\infty}(\x)=\varphi(\x)$.
\end{lemma}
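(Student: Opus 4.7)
My starting point would be the verification identity \eqref{713} supplied by Lemma \ref{verif}, which rewrites the cost as
\begin{align*}
J_{\infty}(\mathrm{U})=\varphi(\x)+\frac{1}{2}\E\int_0^{\infty} e^{-\lambda t}\Bigl[\|\mathrm{U}(t)+\Q^{1/2}\D_{\x}\varphi(\X(t,\x))\|_{\H}^2-\Psi(\|\Q^{1/2}\D_{\x}\varphi(\X(t,\x))\|_{\H}-R)\Bigr]\d t.
\end{align*}
A direct pointwise minimization over vectors $\boldsymbol{u}\in\H$ with $\|\boldsymbol{u}\|_{\H}\leq R$ of the map $\boldsymbol{u}\mapsto\|\boldsymbol{u}+\boldsymbol{p}\|_{\H}^2$, carried out separately in the two regimes $\|\boldsymbol{p}\|_{\H}\leq R$ and $\|\boldsymbol{p}\|_{\H}>R$, shows that the infimum equals $\Psi(\|\boldsymbol{p}\|_{\H}-R)$ and is attained exactly at $\boldsymbol{u}=\mathcal{G}(\boldsymbol{p})$, with $\mathcal{G}$ as in the statement. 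Applied with $\boldsymbol{p}=\Q^{1/2}\D_{\x}\varphi(\X(t,\x))$, this renders the bracketed integrand pointwise non-negative for every admissible $\mathrm{U}$, with equality if and only if $\mathrm{U}(t)=\mathcal{G}(\Q^{1/2}\D_{\x}\varphi(\X(t,\x)))$ for a.e. $t$, $\mathbb{P}$-a.s. Consequently $J_{\infty}(\mathrm{U})\geq\varphi(\x)$ for every $\mathrm{U}\in\mathcal{U}_{\mathrm{ad}}$, and the remaining task is to exhibit a single admissible pair realizing equality.

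\textbf{Construction via Girsanov.} Since $\|\mathcal{G}(\boldsymbol{p})\|_{\H}\leq R$ for all $\boldsymbol{p}\in\H$, the candidate feedback is uniformly bounded, which makes Girsanov's theorem the natural tool. Starting from the uncontrolled weak solution $\X$ to \eqref{32} on the basis $(\Omega,\mathscr{F},\{\mathscr{F}_t\},\mathbb{P})$ (whose existence is guaranteed by Theorem \ref{exis2}), define the progressively measurable shift $\mathrm{U}^{*}(t):=\mathcal{G}(\Q^{1/2}\D_{\x}\varphi(\X(t,\x)))$ and the exponential martingale
\begin{align*}
\rho(T)=\exp\left(\int_0^T(\mathrm{U}^{*}(s),\d\W(s))-\tfrac{1}{2}\int_0^T\|\mathrm{U}^{*}(s)\|_{\H}^2\d s\right).
\end{align*}
The uniform bound $\|\mathrm{U}^{*}(s)\|_{\H}\leq R$ triggers Novikov's criterion, so $\rho$ is a true martingale and the measures $\mathrm{d}\mathbb{P}^{*}=\rho(T)\mathrm{d}\mathbb{P}$ define a compatible family for which $\W^{*}(t):=\W(t)-\int_0^t\mathrm{U}^{*}(s)\d s$ is a cylindrical Wiener process. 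Rewriting \eqref{32} in terms of $\W^{*}$ shows that $\X^{*}:=\X$, regarded on $(\Omega,\mathscr{F},\{\mathscr{F}_t\},\mathbb{P}^{*})$, together with $\mathrm{U}^{*}$, is a weak solution of the closed-loop state equation \eqref{51}, and the admissibility constraint $\|\mathrm{U}^{*}(t)\|_{\H}\leq R$ is built into $\mathcal{G}$.

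\textbf{Completing optimality and main obstacle.} Substituting $(\mathrm{U}^{*},\X^{*})$ into \eqref{713} makes the integrand vanish identically by the choice of $\mathcal{G}$, hence $J_{\infty}(\mathrm{U}^{*})=\varphi(\x)$, matching the lower bound and yielding both optimality and the feedback formula. The principal obstacle is a rigorous justification of the It\^o-type computation \eqref{stpr} underpinning Lemma \ref{verif}, since $\varphi$ is only a mild solution of \eqref{55} in $\L^2(\H;\eta)$ and $\Q^{1/2}\D_{\x}\varphi$ lives in $\L^2(\H,\eta;\H)$, so neither classical second derivatives nor pointwise evaluations along paths are available a priori. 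I would overcome this by a Yosida-type smoothing: approximate $\varphi$ by $\varphi_n=n(n\I-\mathcal{N}_2)^{-1}\varphi\in\D(\mathcal{N}_2)$ (or by Galerkin projections onto $\mathrm{span}\{\boldsymbol{e}_1,\ldots,\boldsymbol{e}_n\}$), apply the finite-dimensional It\^o formula along the associated truncated state process, and pass to the limit using the resolvent estimates of Lemma \ref{lem4.10}, the Lipschitz continuity of $g$, the exponential moment bounds of Lemma \ref{lem4.1}, and the invariance of $\eta$ under the controlled semigroup generated by the bounded perturbation $\mathcal{N}_2+\langle\mathcal{G}(\Q^{1/2}\D_{\x}\cdot),\Q^{1/2}\D_{\x}\cdot\rangle$ (cf. Lemma \ref{pertubkol}). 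The uniform bound $\|\mathrm{U}^{*}\|_{\H}\leq R$ and the exponential discount $e^{-\lambda t}$ provide the uniform integrability required to exchange the limit, the $\mathbb{P}$-expectation, and the integral over $[0,\infty)$, thereby closing the argument.
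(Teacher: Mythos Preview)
Your proposal is correct and follows precisely the route the paper indicates: the paper gives no self-contained proof, merely pointing to Lemma \ref{verif} and \cite[Section 13.4.2]{gdp7}, and your plan (verification identity $\Rightarrow$ pointwise minimization of $\|\boldsymbol{u}+\boldsymbol{p}\|_{\H}^2$ over the $R$-ball $\Rightarrow$ Girsanov construction of the closed-loop pair $\Rightarrow$ approximation to handle the lack of classical regularity of $\varphi$) is exactly the content of that reference. One small slip: $\eta$ is invariant for the \emph{uncontrolled} semigroup, not the controlled one, but what you actually need in the limiting step is only the resolvent bound of Lemma \ref{lem4.10} together with the perturbation Lemma \ref{pertubkol}, not invariance under the feedback dynamics.
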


\section{Applications in Control Theory: Optimal stopping problem}\label{sec6}\setcounter{equation}{0} 
	Let $\X(\cdot)$ be the process associated with the following SCBF equations: 
	\begin{equation}\label{4p2}
		\left\{
		\begin{aligned}
			\d\X(r)+[\mu \A\X(r)+\alpha\X(r)+ \B(\X(r))+\beta\mathcal{C}(\X(r))]\d r&=\sqrt{\Q}\d\W(r), \ r> t,\\
			\X(t)&=\x.
		\end{aligned}
		\right.
	\end{equation}
Let us denote by $\mathscr{F}^t_r$, a filtration generated by $\W(\cdot)$, that is, $\mathscr{F}^t_r:=\sigma\{\W(s):t\leq s\leq r \}$. For an $\mathscr{F}^t_r$-adapted stopping time $\tau$, let us define a cost functional associated with \eqref{4p2} as
\begin{align}\label{4.1.1}
	J_{\tau}(\x)=\E\left[\int_t^{\tau}\F(s,\X(s))\d s\right]+ \E[\G(\X(\tau))],
\end{align}
where $\F:(0,\infty)\times\H\to\R$ and $\G:\H\to\R$ are given functions. The integral cost in \eqref{4.1.1} corresponds to what is paid while the process is not going to be stopped. The final cost, that is, $\E[\G(\X(\tau))]$, corresponds to what is paid when we decide to stop the process at time $\tau$. We say $\tau$ is \emph{admissible} whenever $J_\tau(\x)$ is well-defined. Then, the optimal stopping problem is to find a stopping time $\tau^*$ which minimizes $J_\tau(\x)$. 

Let us define the value function of the above optimal stopping problem associated with \eqref{4p2} as
\begin{align}\label{4.1}
	\varphi(t,\x):=\inf_{\tau\in\mathfrak{M}}\left\{\E\left[\int_t^{\tau}\F(s,\X(s))\d s\right]+\E[\G(\X(\tau))]\right\},
\end{align}
where $\mathfrak{M}$ is the family of all $\{\mathscr{F}_r^t\}_{r\geq t}$-adapted stopping times defined as 
\begin{align*}
	\mathfrak{M}:=\{\tau\in\mathscr{F}^t_r: t\leq\tau\leq T, \  \mathbb{P}\text{-a.s.} \}.
\end{align*}
The optimal stopping problem can be reduced to a free boundary problem or an obstacle problem, which can be solved explicitly. One can show that the value function $\varphi$ defined by \eqref{4.1} (after a suitable change of time variable) is formally the solution to the  following variational inequality: 
\begin{equation}\label{4.3}
	\left\{
	\begin{aligned}
		&	\frac{\partial\varphi}{\partial t}(t,\x)- \frac{1}{2}\Tr\left[\Q\D_{\x}^2\varphi(t,\x)\right]+(\mu\A\x+\alpha\x+\B(\x)+\beta\mathcal{C}(\x),\D_{\x}\varphi(t,\x))\leq\F(t,\x),\\ &\quad\text{ for all }\ t\geq 0, \ \x\in\D(\A), \ \varphi(t,\x)\leq\G(\x), \ \text{ for all }\ t\geq 0, \ \x\in\H, \\
		&	\frac{\partial\varphi}{\partial t}(t,\x)-\frac{1}{2}\Tr\left[\Q\D_{\x}^2\varphi(t,\x)\right]+(\mu\A\x+\alpha\x+\B(\x)+\beta\mathcal{C}(\x),\D_{\x}\varphi(t,\x))=\F(t,\x),\\
		&\quad\text{ in }\ \{\x:\varphi(t,\x)<\G(\x)\}, \ \varphi(0,\x)=\varphi_0(\x),\ \x\in\H. 
	\end{aligned}
	\right. 
\end{equation}

\begin{remark}\cite[Chapter I]{gpas}
	 Note that the unique solution of \eqref{4p2} is an $\H$-valued continuous Markov process. Being Markovian means the process $\X(r)$ always starts afresh. It implies that at time $r$, the decision `to stop' or `to continue' with the observation should depend only on the present state $\X(r)$ of the process and not on its past states $\X(s)$ for $t\leq s<r$. Thus following the sample path $r\mapsto\X(r)(\omega),$ for a given and fixed $\omega\in\Omega$, and evaluating the integral and final costs, it is naturally expected that at each time $r,$ we shall be able  to decide optimally either to continue with the observation or to stop it. In this way, the state space $\H$ naturally splits into two regions: the continuation (or noncoincidence) set $C$ and the stopping (or coincidence) set $S$.  Then, it follows that as soon as the observed value $\X(r)(\omega)$ enters the stopping  region $S$, observation should be stopped, and  optimal stopping time is obtained. However, the central question here is how to determine the sets $C$ (and $S$).
	
Formally, let us introduce the continuation set	\begin{align}\label{noncon}
		C=\{\x\in\H:\varphi(\cdot,\x)<G(\x)\},
	\end{align}
	and the stopping set
	\begin{align}\label{con}
		S=\{\x\in\H:\varphi(\cdot,\x)=G(\x)\}.
	\end{align}
	Note that if $\varphi(\cdot,\x)$ and $G(\x)$ is continuous, then $C$ is open, and the set $S$ is closed. Further, let us introduce the first exit time $\tau_C$ of the process $\X(r)$ from $C$ (or first entry time of the process $\X(r)$ into $S$) by setting
	\begin{align*}
		\tau_C:=\{r\geq t:\X(r)\notin C\}.
	\end{align*}
	Note that $\tau_C$ is an $\mathscr{F}_t$-adapted stopping time when $C$ is open, since, both the process $\X(t)$ and $\mathscr{F}_t$ are right continuous. 
\end{remark}

\subsection{A ``heuristic'' explanation of the variational inequality \eqref{4.3}}\label{derivation}

Let $\tau$ be an admissible stopping rule and let $J_\tau(\x)$ be the corresponding cost functional. 
Suppose that there is a stopping rule $\tau^*$, with continuation region $C$, which minimizes the cost functional $J_{\tau}(\x)$ for every $\x\in\H$ and therefore from \eqref{4.1}, the value function is $\varphi(t,\x)=J_{\tau^*}(\x)$.
We aim to find the equation for $\varphi(\cdot,\x)$ (which we will see later that it turns out to be variational inequality). To this end, let us define a new stopping rule $\tau_1$ as
\begin{align*}
	\tau_1:=\{r\geq\tau:\X(r)\notin C\}. 
\end{align*}
It means $\tau_1$ is the rule under which we do not stop the process, until the time $\tau$, and continue optimally afterwards. Then by assumption, we have
\begin{align}\label{tau1}
	J_{\tau^*}(\x)\leq J_{\tau_1}(\x) \ \text{ for all } \ \x\in\H.
\end{align}
Now, by using the Markovian proiperty of the process $\X(\cdot)$ and the tower property of the conditional expectation,we calculate
\begin{align*}
J_{\tau_1}(\x)&=\E\left[\int_t^{\tau_1}\F(s,\X(s))\d s+ \G(\X(\tau_1))\right]\nonumber\\&=
\E\left[\int_t^{\tau}\F(s,\X(s))\d s+\int_{\tau}^{\tau_1}\F(s,\X(s))\d s+ \G(\X(\tau_1))\right]\nonumber\\&=
\E\bigg\{\E\left[\int_t^{\tau}\F(s,\X(s))\d s+\int_{\tau}^{\tau_1}\F(s,\X(s))\d s+ \G(\X(\tau_1))\right]\bigg\vert\mathscr{F}^t_{\tau}\bigg\}
\nonumber\\&=\E\bigg\{\int_t^{\tau}\F(s,\X(s))\d s+ \E\left[\int_{\tau}^{\tau_1}\F(s,\X(s))\d s+ \G(\X(\tau_1))\big\vert\mathscr{F}^t_{\tau}\right]\bigg\}\nonumber\\&
=\E\bigg\{\int_t^{\tau}\F(s,\X(s))\d s+ \E\left[\int_{\tau}^{\tau_1}\F(s,\X(s))\d s+ \G(\X(\tau_1))\big\vert\X(\tau)\right]\bigg\}\nonumber\\&=
\E\bigg\{\int_t^{\tau}\F(s,\X(s))\d s+ J_{\tau_1}(\X(\tau))\bigg\}.
\end{align*}
Thus, from \eqref{tau1}, we conclude that
\begin{align}\label{tau1.1}
	\varphi(t,\x)\leq\E\bigg\{\int_t^{\tau}\F(s,\X(s))\d s+ \varphi(\tau,\X(\tau))\bigg\}.
\end{align}
Note that if we choose $\tau\leq\tau^*$, then by repeating the above procedure, we will get equality in \eqref{tau1.1}, that is,
\begin{align}\label{tau1.2}
	\varphi(t,\x)=\E\bigg\{\int_t^{\tau}\F(s,\X(s))\d s+ \varphi(\tau,\X(\tau))\bigg\}.
\end{align}
On applying the It\^o formula to $\varphi(\cdot,\X(\cdot))$ by assuming that it is sufficiently smooth, and then taking expectation, we get 
\begin{align}\label{itof}
	&\E[\varphi(\tau,\X(\tau))]\nonumber\\&=\varphi(t,\x)+\E\bigg[-\int_t^{\tau} \left(\frac{\partial\varphi}{\partial t}(s,\X(s))+(\mu\A\X+\alpha\X+ \B(\X)+\beta\mathcal{C}(\X),\D_{\x}\varphi(s,\X(s)))\right)
	\d s\bigg]\no\\&\quad+\E\bigg[\int_t^{\tau} \frac{1}{2}\Tr\left[\Q\D_{\x}^2\varphi(s,\X(s))\right]\d s\bigg].
\end{align}
Using \eqref{tau1.2}, we obtain 
\begin{align*}
	&\E\bigg[\int_t^{\tau} \left(\frac{\partial\varphi}{\partial t}(s,\X(s))+(\mu\A\X+\alpha\x+ \B(\X)+\beta\mathcal{C}(\X),\D_{\x}\varphi(s,\X(s)))\right)
	\d s\bigg]\no\\&\quad-\E\bigg[\int_t^{\tau} \bigg(\frac{1}{2}\Tr\left[\Q\D_{\x}^2\varphi(s,\X(s))\right]-\F(s,\X(s))\bigg)\d s\bigg]=0.
\end{align*}
Since the above equality holds for any $\tau\leq\tau^*$, we then find
\begin{align*}
	\frac{\partial\varphi}{\partial t}(t,\x)-\frac{1}{2}\Tr\left[\Q\D_{\x}^2\varphi(t,\x)\right]+ (\mu\A\x+\alpha\x+\B(\x)+\beta\mathcal{C}(\x),\D_{\x}\varphi(t,\x))=\F(t,\x),
\end{align*}
provided $\tau^*>0$, that is, for $\x\in C$. Moreover, for $\x\notin C$, we have $\varphi(t,\x)=G(\x)$. Finally, we deduce
\begin{equation*}
	\left\{
	\begin{aligned}
			\frac{\partial\varphi}{\partial t}(t,\x)&-\frac{1}{2}\Tr\left[\Q\D_{\x}^2\varphi(t,\x)\right]\nonumber\\&+ (\mu\A\x+\alpha\x+\B(\x)+\beta\mathcal{C}(\x),\D_{\x}\varphi(t,\x))=\F(t,\x), \  &&\text{ for } \  \x\in C,\\
			\varphi(t,\x)&=G(\x),  \  &&\text{ for } \  \x\notin C.
	\end{aligned}
	\right.
\end{equation*}

Now, assume that $\tau$ is any arbitrary admissible stopping time. Then, we have the inequality \eqref{tau1.1}. By utilizing this inequality  in the It\^o formula \eqref{itof} and proceeding as above, we arrive at
\begin{align*}
		\frac{\partial\varphi}{\partial t}(t,\x)-\frac{1}{2}\Tr\left[\Q\D_{\x}^2\varphi(t,\x)\right]+ (\mu\A\x+\alpha\x+\B(\x)+\beta\mathcal{C}(\x),\D_{\x}\varphi(t,\x))\leq \F(t,\x), 
\end{align*}
and $\varphi(t,\x)\leq G(\x)$ for all $\x\in\H$. It completes formally the derivation of \eqref{4.3}.

\begin{remark}
	
The idea is to solve the free-boundary problems \eqref{4.3}, and then we show that it coincides with the solution of the optimal stopping problem \eqref{4.1}. This is called \emph{verification theorem}.
\end{remark}

\subsection{Formulation of abstract problem}
Let us define 
\begin{align}\label{45}
	K=\left\{\varphi\in\L^2(\H;\eta):\varphi\leq \G, \ \eta \text{-a.e.} \right\},
\end{align} 
which is a closed convex subset of $\mathbb{L}^2(\H;\eta),$  where $\eta$ is the invariant measure for $\mathrm{P}_t$.
We define the normal cone $N_K:\mathbb{L}^2(\H;\eta)\to 2^{\mathbb{L}^2(\H;\eta)}$ at $K$ in $\varphi$ (cf. \cite{VB2}) by 
\begin{align}\label{4p54}
	N_K(\varphi)=\left\{\zeta\in\mathbb{L}^2(\H;\eta):\int_{\H}\zeta(\varphi-\psi)\eta(\d x)\geq 0,\ \text{ for all }\ \psi\in K \right\},\ \varphi\in K,
\end{align}
	or equivalently, we can write
	\begin{align*}
		N_K(\varphi)=\left\{\zeta\in\mathbb{L}^2(\H;\eta): \zeta(\x)=0 \ \text{ if } \ \varphi(\x)<\G(\x) \text{ and } \zeta(\x)\geq0 \  \text{ if } \ \varphi(\x)=\G(\x), \ \eta \text{-a.e.}\right\}.
		\end{align*}
We are going to study the existence and uniqueness result for the problem \eqref{4.3} which can be viewed as a nonlinear equation of the form: 
\begin{equation}\label{44}
	\left\{
	\begin{aligned}
		\frac{\d\varphi(t)}{\d t}-\mathcal{N}_2\varphi(t)+N_K\varphi(t) &\ni\F(t), \ t\in(0,T),\\
		\varphi(0)&=\varphi_0,
	\end{aligned}
	\right. 
\end{equation}
where $\varphi_0\in\mathbb{L}^2(\H;\eta)$ and $\F\in\mathrm{L}^2(0,T;\mathbb{L}^2(\H;\eta))$ are given. Here $\mathcal{N}_2$ is the infinitesimal generator (the Kolmogorov operator) of the transition semigroup $\P_t$ associated with the stochastic SCBF system \eqref{4p2}. 


	 The operator $\mathcal{N}_2$ appearing in the variational inequality \eqref{44} is just the operator $\mathcal{N}_2$ defined in Theorem \ref{thm4.7}.  Let us denote by 
	 \begin{align*}
	 	W^{1,p}([0,T];\mathbb{L}^2(\H;\eta)), \ \ 1\leq p\leq \infty,
	 \end{align*}
	 the space of all absolutely continuous functions 
	 $$\varphi:[0,T]\to \mathbb{L}^2(\H;\eta)\ \text{ such that }\ \frac{\d\varphi}{\d t}\in \mathrm{L}^{p}(0,T;\mathbb{L}^2(\H;\eta)).$$
	 \begin{definition}\cite[Definition 3.1, Chapter III, pp. 64]{OPHB}
	 	A \emph{strong solution} of \eqref{44} is a function $\varphi\in\mathrm{W}^{1,1}([0,T];\mathbb{L}^2(\H;\eta))$ which satisfies \eqref{44} a.e. on $[0,T]$.
%
	 \end{definition}
     Let us now formulate the main existence and uniqueness result for the problem \eqref{44}. 
\begin{theorem}\label{main-thm}
	Assume that  $\mu$ and $\alpha$ satisfy \eqref{439}  and  the condition \eqref{Asmp} holds. Suppose further that $G\in\mathbb{L}^2(\H;\eta)$ and 
	\begin{align}\label{456}
		(P_tG)(\x)\leq G(\x)\ \text{ for all }\ t\geq 0,\ \x\in\H. 
	\end{align}
	Then for each $\varphi_0\in\D(\mathcal{N}_2)\cap K$ and $F\in W^{1,1}([0,T];\mathbb{L}^2(\H;\eta)),$ there exists a unique function $\varphi\in W^{1,\infty}([0,T];\mathbb{L}^2(\H;\eta)),$ such that $\mathcal{N}_2\varphi\in\mathrm{L}^{\infty}(0,T;\mathbb{L}^2(\H;\eta))$ and 
	\begin{equation}\label{454}
		\left\{
	\begin{aligned}
		\frac{\d}{\d t}\varphi(t)-\mathcal{N}_2\varphi(t)+\zeta(t)&=F(t)\ \text{ for a.e.}\ t\in[0,T],\\
		\zeta(t)&\in N_K(\varphi(t))\ \text{ for a.e.}\ t\in[0,T],\\
		\varphi(0)&=\varphi_0.
	\end{aligned}
	\right. 
	\end{equation}
	Furthermore, $\varphi:[0,T]\to \mathbb{L}^2(\H;\eta)$ is differentiable from the right and
	\begin{align}
		\frac{\d^+}{\d t}\varphi(t)&=F(t)+\mathcal{N}_2 \varphi(t)-P_{N_K(\varphi(t))}(F(t)+\mathcal{N}_2\varphi(t)), \ \text{ for all }\ t\in[0,T),
	\end{align}
	where $P_{N_K(\varphi)}$  is the projection on the cone $N_{K}(\varphi)$. 
\end{theorem}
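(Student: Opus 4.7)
The strategy is to recast the variational inclusion \eqref{44} as an abstract Cauchy problem
\[
\frac{\d\varphi}{\d t}+\mathcal{A}\varphi\ni F(t),\qquad \varphi(0)=\varphi_0,
\]
in the Hilbert space $\L^2(\H;\eta)$, where $\mathcal{A}:=-\mathcal{N}_2+\partial I_K$ and $I_K$ is the indicator function of the closed convex set $K$ of \eqref{45}, so that $\partial I_K=N_K$. By Theorem \ref{thm4.7}, $\mathcal{N}_2$ is the infinitesimal generator of the strongly continuous semigroup of contractions $\{\P_t\}_{t\geq 0}$ on $\L^2(\H;\eta)$; in particular, $-\mathcal{N}_2$ is $m$-accretive. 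Once $\mathcal{A}$ is shown to be $m$-accretive, the conclusion is obtained by Barbu's regularity theorem for nonlinear Cauchy problems of accretive type (\cite{VB1,VB2}): for $\varphi_0\in\D(\mathcal{A})=\D(\mathcal{N}_2)\cap K$ and $F\in W^{1,1}([0,T];\L^2(\H;\eta))$, there is a unique strong solution $\varphi\in W^{1,\infty}([0,T];\L^2(\H;\eta))$ with $\mathcal{N}_2\varphi\in\mathrm{L}^{\infty}(0,T;\L^2(\H;\eta))$, right-differentiable on $[0,T)$, and the selection $\zeta(t)=P_{N_K(\varphi(t))}(F(t)+\mathcal{N}_2\varphi(t))$ in \eqref{454} is produced automatically as the minimal section of the multivalued term.

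The first key step is $K$-invariance of the semigroup: $\P_t(K)\subseteq K$ for every $t\geq 0$. This is precisely where hypothesis \eqref{456} enters. Indeed, $\P_t\psi(\x)=\E[\psi(\X(t,\x))]$ is order-preserving, so whenever $\psi\in K$, that is $\psi\leq G$ $\eta$-a.e., one obtains $\P_t\psi\leq\P_tG\leq G$, whence $\P_t\psi\in K$. Combined with the Laplace-transform representation
\[
(\I-\lambda\mathcal{N}_2)^{-1}f=\frac{1}{\lambda}\int_0^{\infty} e^{-t/\lambda}\P_tf\,\d t,\qquad\lambda>0,
\]
and the convexity of $K$, this gives the resolvent invariance $(\I-\lambda\mathcal{N}_2)^{-1}(K)\subseteq K$ for every $\lambda>0$.

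The second, and technically central, step is the $m$-accretivity of $\mathcal{A}=-\mathcal{N}_2+\partial I_K$. Brezis's classical perturbation theorem for an $m$-accretive operator plus the subdifferential of a proper convex lower-semicontinuous function (see \cite{VB2}) requires exactly the resolvent invariance just established, so I would invoke it directly to conclude that $\mathcal{A}$ is $m$-accretive in $\L^2(\H;\eta)$. Alternatively, one can proceed constructively by Yosida-regularization: set $N_K^{\eps}(\varphi)=\eps^{-1}(\varphi-P_K\varphi)$, with $P_K$ the metric projection of $\L^2(\H;\eta)$ onto $K$; solve the approximating elliptic equation $\lambda\varphi_{\eps}-\mathcal{N}_2\varphi_{\eps}+N_K^{\eps}(\varphi_{\eps})=f$ by a Banach fixed-point argument modelled on the one in Subsection \ref{HJBeqn}; derive $\eps$-uniform $\L^2(\H;\eta)$-bounds on $N_K^{\eps}(\varphi_{\eps})$ using the $K$-invariance of $\P_t$ together with the carr\'e du champ identity \eqref{453}; and pass to the limit $\eps\downarrow 0$ in the monotone inclusion.

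The hardest part of the plan is this $m$-accretivity step: it relies crucially on the compatibility condition \eqref{456}, without which $K$ would not be invariant under the contraction semigroup and no $m$-accretive extension of $-\mathcal{N}_2+N_K$ would be available. All the remaining inputs, namely the $m$-dissipativity of $\mathcal{N}_2$ as the closure of the concrete Kolmogorov operator $\mathcal{N}_0$, the density of $\mathscr{E}_{\A}(\H)$ in $\L^2(\H;\eta)$, and the integration-by-parts formula for $\Q^{\frac{1}{2}}\D_{\x}\varphi$, are already in hand from Theorem \ref{thm4.7} and Lemma \ref{carredu}. Uniqueness is then immediate from accretivity, and the $W^{1,\infty}$ regularity, right-differentiability and projection formula for $\frac{\d^+}{\d t}\varphi$ are standard output of Barbu's theory once $\mathcal{A}$ is $m$-accretive.
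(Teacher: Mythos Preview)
Your proposal is correct and follows essentially the same route as the paper: show that $\mathcal{A}=-\mathcal{N}_2+N_K$ is $m$-accretive in $\L^2(\H;\eta)$ by combining the $m$-dissipativity of $\mathcal{N}_2$ with the resolvent invariance $(\I-\lambda\mathcal{N}_2)^{-1}K\subset K$, the latter being obtained from the Laplace representation and the super-harmonicity hypothesis \eqref{456}, and then invoke Barbu's existence theory for accretive Cauchy problems. The paper cites \cite[Theorem 1.8, Chapter IV]{VB1} for the perturbation step (what you call Brezis's theorem) and \cite[Theorem 1.6, Chapter III]{VB1} for the Cauchy problem, exactly as you outline.
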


\begin{remark}
	It is important to note that a solution to \eqref{454} can be viewed as a solution to the variational inequality \eqref{4.3}. Indeed, if $\G,\varphi\in\C_b(\H)$ and 
choose $\mathfrak{h}\in\L^2(\H;\eta)$ with $\mathfrak{h}\leq\boldsymbol{0}$ such that $\psi=\varphi+\mathfrak{h}\in K$, then, for all $\zeta\in N_K(\varphi)$, we have 
	\begin{align}\label{oi1}
	\int_{\H}\zeta(\x)\mathfrak{h}(\x)\eta(\d\x)\leq0,
\end{align}
for all $\mathfrak{h}\in\L^2(\H;\eta)$ with $\mathfrak{h}\leq\boldsymbol{0}$. Let us take $\mathfrak{h}:=-\zeta^{-}$, where $\zeta^{-}$ is the negative part of $\zeta$ defined as
\begin{align*}
	\zeta^{-}:=\begin{cases}
		-\zeta, \ &\mbox{ if } \  \zeta<0,\\
		0, \  &\mbox{ if } \  \zeta\geq0.
	\end{cases}
\end{align*}
Then, clearly $\zeta(\x)\zeta^{-}(\x)=-|\zeta^{-}(\x)|^2$ for any $\x\in\H$. Thus, from \eqref{oi1}, we get
\begin{align}\label{qa1}
	\int_{\H}|\zeta^{-}(\x)|^2\eta(\d\x)\leq0.
\end{align}
Now, consider the following set
\begin{align*}
	\mathcal{H}_0:=\{\x\in\H:\varphi(\x)<\G(\x)\}.
\end{align*}
Let $\vartheta\in\C_b(\H)$ be such that $\mathrm{supp}(\vartheta)\subset\mathcal{H}_0$. Then, for sufficiently small $|\e|>0$, one can easily verify that $\psi=\varphi\pm \e\vartheta\leq\G$  and therefore $\psi\in K$. Then, from the definition of $N_K(\varphi)$, we have
\begin{align*}
	\mp\e\int_{\H}\zeta(\x)\vartheta(\x)\eta(\d\x)\leq0.
\end{align*}
Since $\e$ is both positive and negative, we conclude that
\begin{align}\label{oi2}
  \int_{\H}\zeta(\x)\vartheta(\x)\eta(\d\x)=0,
\end{align}
for all $\vartheta\in\C_b(\H)$ with $\mathrm{supp}(\vartheta)\subset\mathcal{H}_0$. 
 
Let us recall that the support of the measure $\eta$ (see \cite[Theorem 2.1, Chapter II]{KRP}) is defined as
 \begin{align*}
 	\mathrm{supp}(\eta)=\{\x\in\H: \eta(U_{\x})>0 \ \text{ for every open neighborhood } \ U_{\x} \text{ of } \x\}.
 \end{align*}
 Since the transition semigroup $\P_t$ associated with \eqref{4p2} is irreducible (\cite[Subsection 3.3]{akmtm}), it follows that the invariant measure $\eta$ is full (\cite{gdp2}). By definition, it says that $\eta(U)>0$ for any open set $U$. In particular, it means $\mathrm{supp}(\eta)=\H$. Now, for any $r>0$, let us consider a ball $B_r(\x_0):=\{\x\in\H:\|\x-\x_0\|_{\H}<r\}$ centered at $\x_0\in\H$. Then, from \eqref{qa1}, we write
 \begin{align*}
 	\int_{B_r(\x_0)}|\zeta^{-}(\x)|^2\eta(\d\x)+\int_{\H\setminus B_r(\x_0)}|\zeta^{-}(\x)|^2\eta(\d\x)\leq0.
 \end{align*}
 On leaving the second integral in the left hand side of above inequality, we obtain
 \begin{align*}
 	\int_{B_r(\x_0)}|\zeta^{-}(\x)|^2\eta(\d\x)\leq0,
 \end{align*}
which in turn implies $\zeta^{-}(\x)\leq0$ for all $\x\in\H$, by continuity of $\zeta$. Thus, $\zeta^{-}(\x)=0$ for all $\x\in\H$, and hence it follows that
\begin{align}\label{oi4}
	\zeta(\x)=\zeta^{+}(\x)\geq0 \ \text{ for all } \ \x\in\H.
\end{align}

 Now, from \eqref{oi2}, it follows that (\cite{VBSSO})
 \begin{align}\label{oi3}
 \int_{\mathcal{H}_0}|\zeta(\x)|^2\eta(\d\x)=0.
 \end{align}
Again by the continuity of $\zeta$ and the fact that invariant measure $\eta$ is full, we deduce that
 \begin{align*}
 	\zeta(\x)=0 \ \text{ for all } \ \x\in\mathcal{H}_0.
 \end{align*}
\end{remark}

\begin{proof}[Proof of Theorem \ref{main-thm}]
	Under assumptions of Theorem \ref{main-thm}, we prove that the following multi-valued operator
	\begin{align}
		\mathscr{A}(\varphi)&:=-\mathcal{N}_2\varphi+N_K(\varphi)\ \text{ for all }\ \varphi\in\D(\mathscr{A}):=\D(N)\cap K
	\end{align}
	is $m$-accretive in $\L^2(\H;\eta)$. That is, 
	\begin{align}\label{459}
		\int_{\H}(\mathscr{A}(\varphi)-\mathscr{A}(\psi))(\varphi-\psi)\eta(\d \x)\geq 0\ \text{ for all }\ \varphi,\psi\in\D(\mathscr{A})
	\end{align}
	and 
	\begin{align}\label{460}
		\mathscr{R}(\I+\lambda\mathscr{A})=\L^2(\H;\eta)\ \text{ for all }\ \lambda>0,
	\end{align}
	where $\mathscr{R}$ represents  the range of the operator and $\I$ is the identity operator.
	
	The ``Carre du Champ's" identity \eqref{453} and the linearity of the operator $\mathcal{N}_2$ imply 
	\begin{align}\label{461}
		\int_{\H}(\mathcal{N}_2\varphi-\mathcal{N}_2\psi)(\varphi-\psi)\eta(\d \x)=-\frac{1}{2}\int_{\H}\|\sqrt{\Q}\D_{\x}(\varphi(\x)-\psi(\x))\|_{\H}^2\eta(\d\x)\leq 0. 
	\end{align}
It has been shown in \cite[Theorem 2.1, Chapter 2, pp. 62]{VB2}, that when $X$ is a Banach space and $f : X \to\mathbb{R}$ is proper convex
lower semicontinuous, then its convex subdifferential $\partial f : X \to 2^{X^*}$, defined by $x^*\in \partial f(x)$ if $f(x)$ is finite and for every $y \in X$, $f(y)\geq f(x) + \langle y-x, x^*\rangle$, is maximal monotone. In particular, the normal cone to $C$ which is given by $N_C = \partial I_K $ is maximal monotone (\cite[Section 2.2, Chapter 2, pp. 60]{VB2}), whenever $C\subset X$ is closed and  convex. Here $I_C (x) = 0,$ for $x \in C$; $I_C(x) = +\infty$,  for $x \in X \backslash C$ denotes the indicator function of $C$. As $\D(\partial I_K)=K$ is closed and convex, from the definition of normal cone given in \eqref{4p54}, we have $N_K$ is maximal monotone and 
\begin{align}\label{462}
	\int_{\H}(N_K(\varphi)-N_K(\psi))(\varphi-\psi)\eta(\d \x)\geq 0. 
\end{align}
Combining \eqref{461} and \eqref{462}, one can easily obtain \eqref{459}. 

Now it is left to show \eqref{460} only. In order to prove \eqref{460}, it is sufficient to establish (\cite[Theorem 1.8, Chapter IV, pp. 186]{VB1})
\begin{align}\label{464}
	(\I-\lambda\mathcal{N}_2)^{-1}K\subset K
\end{align}
Let $g\in K $ be arbitrary, but fixed.  Using the resolvent representation formula, we observe 
\begin{align*}
	(\I-\lambda\mathcal{N}_2)^{-1}g(\x)&= \frac{1}{\lambda}\int_0^{\infty}e^{-\frac{t}{\lambda}}P_tg(\x)\d t=\frac{1}{\lambda}\int_0^{\infty}e^{-\frac{t}{\lambda}}\E\left[g(\X(t,\x))\right]\d t\ \text{ for all }\ \x\in\H,
\end{align*}
where $\X(t,\x)$ is the solution to the stochastic problem \eqref{4p2}. Since $g\in K$ and using the definition of $K$ in \eqref{45}, we immediately have 
\begin{align}\label{466}
	(\I-\lambda\mathcal{N}_2)^{-1}g(\x)\leq \frac{1}{\lambda}\int_0^{\infty}e^{-\frac{t}{\lambda}}\E\left[G(\X(t,\x))\right]\d t\ \text{ for all }\ \lambda>0. 
\end{align}
But by the assumption \eqref{456}, we know that 
\begin{align*}
\E\left[G(\X(t,\x))\right]\leq G(\x)\ \text{ for all }\ t\in[0,T]\ \text{ and }\ \x\in\H. 
\end{align*}
Therefore, from \eqref{466}, it is immediate that 
\begin{align*}
		(\I-\lambda\mathcal{N}_2)^{-1}g(\x)\leq G(\x)\ \text{ for all }\ \x\in\H, \ \lambda>0.
\end{align*}
so that  $(\I-\lambda\mathcal{N}_2)^{-1}g\in K$ and hence \eqref{464} follows. Since $\mathcal{N}_2$ is $m$-dissipative and single valued, therefore from \cite[Theorem 1.6 (ii), Chapter III, pp. 118]{VB1}, for every $\varphi_0\in\D(\mathcal{N}_2)\cap K$, there exists a unique function $\varphi(t)\in\D(\mathcal{N}_2)$ for every $t\in[0,T]$ which is Lipschitz continuous on $[0,T]$ and everywhere differentiable from the right satisfying the following:
\begin{equation}\label{essnorm1}
	\left\{
	\begin{aligned}
	   \frac{\d^+}{\d t}\varphi(t)&=\mathcal{N}_2\varphi(t), \ t\in(0,T),\\
	   \varphi(0)&=\varphi_0.
	\end{aligned}
	\right.
\end{equation}
and 
\begin{align}\label{essnorm}
	\left\|\frac{\d^+}{\d t}\varphi(t)\right\|_{\mathbb{L}^2(\H;\eta)} \leq\|\mathcal{N}_2\varphi_0\|_{\mathbb{L}^2(\H;\eta)}.
\end{align}
Now, since the condition \eqref{464} holds, therefore from \cite[Theorem 1.8(c), Chapter IV, pp. 186]{VB1}, we have $\mathscr{A}^0(\varphi_0)=(\mathcal{N}_2+N_K)^0(\varphi_0)=\mathcal{N}_2^0\varphi_0$,  
where $\mathscr{A}^0$ is the minimal section of the operator $\mathscr{A}$, that is, 
\begin{align*}
	\|\mathscr{A}^0(\varphi)\|_{\L^2(\H;\eta)}=\inf_{\Psi\in\mathscr{A}(\varphi)}\left\{\|\Psi\|_{\L^2(\H;\eta)}\right\}. 
\end{align*}
Thus \eqref{essnorm1} and \eqref{essnorm} imply that
\begin{align*}
	\|\mathcal{N}_2\varphi(t)\|_{\L^2(\H;\eta)}\leq\|\mathscr{A}^0(\varphi_0)\|_{\L^2(\H;\eta)}\ \text{ for all }\ \varphi_0\in \D(\mathcal{N}_2)\cap K,
\end{align*}
for all $t\in[0,T]$.
The proof of Theorem \ref{main-thm} can be completed by applying  the standard existence and uniqueness theorem for nonlinear Cauchy problems in Hilbert spaces of accretive type (cf. \cite[Theorem 1.6, Chapter III, pp. 118]{VB1})
\end{proof}

An immediate consequence of  Theorem \ref{main-thm} and \eqref{453} is the following corollary:
\begin{corollary}
	Under assumptions of Theorem \ref{main-thm}, the solution $\varphi$ to \eqref{454} satisfies 
	\begin{align*}
		\sqrt{\Q}\D_{\x}\varphi\in\mathrm{L}^{\infty}(0,T;\L^2(\H;\eta)),
	\end{align*}
	where $\sqrt{\Q}\D_{\x}\varphi(\x)=\sum\limits_{k=1}^{\infty}\D_k\varphi(\x)\sqrt{\lambda_k}\boldsymbol{e}_k(\x)$, $\eta$ a.e. in $\H$, $\Q\boldsymbol{e}_k=\lambda_k\boldsymbol{e}_k$. 
\end{corollary}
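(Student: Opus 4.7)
The plan is to combine the regularity information delivered by Theorem \ref{main-thm} with the ``Carr\'e du Champ'' identity of Lemma \ref{carredu}, which is the only tool at our disposal that controls $\sqrt{\Q}\D_{\x}\varphi$ in terms of $\varphi$ and $\mathcal{N}_2\varphi$. The whole argument is essentially an integral estimate applied pointwise in $t$, followed by a uniform bound.

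First, I would unpack what Theorem \ref{main-thm} gives us. The solution $\varphi$ belongs to $W^{1,\infty}([0,T];\mathbb{L}^2(\H;\eta))$ and, crucially, $\mathcal{N}_2\varphi\in\mathrm{L}^{\infty}(0,T;\mathbb{L}^2(\H;\eta))$. In particular, for a.e.\ $t\in[0,T]$, one has $\varphi(t)\in\D(\mathcal{N}_2)$, with
\begin{align*}
	\mathop{\mathrm{ess\,sup}}_{t\in[0,T]}\bigl(\|\varphi(t)\|_{\mathbb{L}^2(\H;\eta)}+\|\mathcal{N}_2\varphi(t)\|_{\mathbb{L}^2(\H;\eta)}\bigr)=:M<\infty.
\end{align*}

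Second, I would apply Lemma \ref{carredu} pointwise in $t$. For a.e.\ $t\in[0,T]$, since $\varphi(t)\in\D(\mathcal{N}_2)$, the integration by parts formula \eqref{453} yields
\begin{align*}
	\tfrac{1}{2}\int_{\H}\|\sqrt{\Q}\D_{\x}\varphi(t,\x)\|_{\H}^{2}\,\eta(\d\x)=-\int_{\H}\mathcal{N}_2\varphi(t,\x)\,\varphi(t,\x)\,\eta(\d\x).
\end{align*}
Applying the Cauchy--Schwarz inequality to the right-hand side and using the uniform bound above, I obtain
\begin{align*}
	\|\sqrt{\Q}\D_{\x}\varphi(t)\|_{\mathbb{L}^2(\H,\eta;\H)}^{2}\leq 2\,\|\mathcal{N}_2\varphi(t)\|_{\mathbb{L}^2(\H;\eta)}\,\|\varphi(t)\|_{\mathbb{L}^2(\H;\eta)}\leq 2M^{2},
\end{align*}
for a.e.\ $t\in[0,T]$. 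This already delivers the desired essential bound; the explicit series representation follows from the definition of $\sqrt{\Q}$ together with the fact that Lemma \ref{carredu} provides $\sqrt{\Q}\D_{\x}\varphi(t)$ as a well-defined element of $\mathbb{L}^2(\H,\eta;\H)$, expanded in the eigenbasis $\{\boldsymbol{e}_k\}$ of $\Q$.

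The mildly delicate point, which I expect to be the main (though minor) obstacle, is to justify measurability of $t\mapsto \sqrt{\Q}\D_{\x}\varphi(t)$ as an $\mathbb{L}^2(\H,\eta;\H)$-valued map, so that the essential supremum above is legitimate. I would argue this by the following closure-type reasoning: since $\varphi\in W^{1,\infty}([0,T];\mathbb{L}^2(\H;\eta))$ and $\mathcal{N}_2\varphi\in\mathrm{L}^{\infty}(0,T;\mathbb{L}^2(\H;\eta))$, the map $t\mapsto\varphi(t)\in\D(\mathcal{N}_2)$ is strongly measurable when $\D(\mathcal{N}_2)$ is equipped with the graph norm. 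The linear operator $\sqrt{\Q}\D_{\x}:\D(\mathcal{N}_2)\to\mathbb{L}^2(\H,\eta;\H)$ is bounded by Lemma \ref{carredu}, hence continuous, and therefore $t\mapsto\sqrt{\Q}\D_{\x}\varphi(t)$ is strongly measurable with essential supremum controlled by $\sqrt{2}\,M$. Combining measurability with the pointwise bound completes the proof.
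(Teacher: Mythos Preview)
Your proposal is correct and matches the paper's approach: the paper states the corollary as ``an immediate consequence of Theorem \ref{main-thm} and \eqref{453}'' without further proof, and what you have written is precisely the spelled-out version of that immediate consequence. Your additional care about the measurability of $t\mapsto\sqrt{\Q}\D_{\x}\varphi(t)$ via the boundedness of $\sqrt{\Q}\D_{\x}$ on $(\D(\mathcal{N}_2),\|\cdot\|_{\D(\mathcal{N}_2)})$ is a nice touch that the paper leaves implicit.
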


\begin{remark}
	It should be noted that if $G\in\C_b^2(\H)$, then by the infinite-dimensional It\^o formula (cf. \eqref{450}), we have 
	\begin{align*}
		\d G(\X(t,\x))&= -(\mu \A\X(t,\x)+\alpha\X(t,\x)+\B(\X(t,\x))+\beta\mathcal{C}(\X(t,\x)),\D_{\x}\varphi(\X(t,\x)))\d t\nonumber\\&\quad+\frac{1}{2}\Tr(\Q\D^2_{\x}\varphi(\X(t,\x)))\d t+(\sqrt{\Q}\d\W(t),\D_{\x}G(\X(t,\x))),
	\end{align*}
	so that 
	\begin{align*}
	&	(P_tG)(\x)-G(\x)\nonumber\\&=-\E\left[\int_0^t(\mu \A\X(s,\x)+\alpha\X(s,\x)+\B(\X(s,\x))+\beta\mathcal{C}(\X(s,\x)),\D_{\x}\varphi(\X(s,\x)))\d s\right]\nonumber\\&\quad+\frac{1}{2}\int_0^t\Tr(\Q\D^2_{\x}\varphi(\X(s,\x)))\d s,
	\end{align*}
for all $t\in[0,T]$.	Therefore, the condition \eqref{456}  is implied in this case by the following one:
	\begin{align*}
		\frac{1}{2}\Tr\left[\Q\D_{\x}^2G(\x)\right]-(\mu\A\x+\alpha\x+\B(\x)+\beta\mathcal{C}(\x),\D_{\x}G(\x))\leq 0\ \text{ for all }\ \x\in\D(\A). 
	\end{align*}
	More generally, the condition \eqref{456} holds if $G \in\D(\mathcal{N}_2)$  and
	$$\mathcal{N}_2 G\leq 0 \ \text{ on }\ \H,$$
	which means that $G$ is \emph{super-harmonic} with respect to $\mathcal{N}_2$. 
\end{remark}

\begin{example}{\cite[Remark 1]{VBSSO}}
	One can consider $F$ and $G$ of the form
	\begin{align*}
		F(t,\x)\equiv\|\nabla\x\|_{\H}^2\ \text{ and }\ G(t,\x)=k(\|\x\|_{\H}^2)\|\x\|_{\H}^2,
	\end{align*}
	where $k\in\C^2(\R^+)$ is such that 
	\begin{align*}
		0\leq k(r)\leq \kappa r, \ k''\leq 0,\ k'>0, \ k(0)=0.
	\end{align*}
	A simple calculation then demonstrates that, provided $\mu$  and $\kappa$ are both suitably large and small, condition \eqref{456} is satisfied. Note that $F$ is  the enstrophy (total vorticity) in the flow field since 	$$\|\nabla\x\|_{\H}^2=\|\mathrm{curl}\ \x\|_{\H}^2\ \text{ for all }\ \x\in\V.$$ We infer from the definition of invariant measure $\eta$ that 
	\begin{align*}
		\int_{\H}(\mathcal{N}_2\psi)(\x)\eta(\d\x)=0\ \text{ where }\ \psi(\x)=\|\x\|_{\H}^2.
	\end{align*}
	Therefore, it is immediate from \eqref{4p5}, that 
	\begin{align*}
		2\mu\int_{\H}\|\nabla\x\|_{\H}^2\eta(\d\x)+2\alpha\int_{\H}\|\x\|_{\H}^2\eta(\d\x)+2\beta\int_{\H}\|\x\|_{\wi\L^{r+1}}^{r+1}\eta(\d\x)=\Tr(\Q),
	\end{align*}
	which implies that $\|\nabla\x\|_{\H}\in \L^2(\H;\eta)$ and  the enstropy is integrable with respect to the invariant measure.
\end{example}

 \medskip\noindent
{\bf Acknowledgments:} The first author would like to thank Ministry of Education, Government of India-MHRD for financial assistance. 

\medskip\noindent	\textbf{Declarations:} 

\noindent 	\textbf{Ethical Approval:}   Not applicable 


\noindent  \textbf{Conflict of interest: }On behalf of all authors, the corresponding author states that there is no conflict of interest.

\noindent 	\textbf{Authors' contributions: } All authors have contributed equally. 

\noindent 	\textbf{Funding: } DST-SERB, India, MTR/2021/000066 (M. T. Mohan). 

\noindent 	\textbf{Availability of data and materials: } Not applicable.


\begin{thebibliography}{99}
	
	
	  \bibitem{Aand} A. Andersson, M. Hefter, A. Jentzen and R. Kurniawan, Regularity properties for solutions of infinite-dimensional Kolmogorov equations in Hilbert spaces, \emph{Potential Anal.}, \textbf{50}(3) (2019), 347--379.
	
	  \bibitem{ware} W. Arendt, J. K. C. Batty, M. Hieber and F. Neubrander, \emph{Vector-Valued Laplace Transforms and Cauchy Problems}, Second edition, Monographs in Mathematics, \textbf{96}, Birkh\"auser/Springer Basel AG, 2011.
	
	   
        
      \bibitem{VB1} V. Barbu, \emph{Nonlinear Semigroups and Differential Equations in Banach Spaces}, Noordhoff International Publishing, 1976.
        
      \bibitem{VB2} V. Barbu, \emph{Analysis and Control of Nonlinear Infinite Dimensional Systems}, Academic Press, 1993.
	
	  \bibitem{VBGD} V. Barbu, G. Da Prato and A. Debussche, The Kolmogorov equation associated to the stochastic Navier-Stokes equations in 2D, \emph{Infinite Dimensional Analysis, Quantum Probability and Related Topics}, 
		{\bf 7}(2) (2004), 163--182. 
		
		\bibitem{VBGA} V. Barbu, G. D. Prato and A. Debussche,
		Essential m-dissipativity of Kolmogorov operators corresponding to periodic 2D-Navier Stokes equations, 
		\emph{Atti Accad. Naz. Lincei Cl. Sci. Fis. Mat. Natur. Rend. Lincei (9) Mat. Appl.}, \textbf{15}(1) (2004), 29--38.
		
		\bibitem{VbGdp} V. Barbu and G. D. Prato, The Kolmogorov equation for a 2D-Navier-Stokes stochastic flow in a channel, \emph{Nonlinear Anal.}, \textbf{69} (3) (2008), 940--949.
		
		\bibitem{VBCM} V. Barbu and C. Marinelli, Variational inequalities in Hilbert spaces with measures and optimal stopping problems, \emph{Appl. Math. Optim.}, \textbf{57}(2) (2008), 237--262.
		
		\bibitem{VBSSO} V. Barbu and S. S. Sritharan, Optimal stopping-time problem for stochastic Navier-Stokes equations and infinite-dimensional variational inequalities,
		\emph{Nonlinear Anal.}, \textbf{64}(5) (2006), 1018--1024.
		
		
		\bibitem{OPHB} H. Brezis, \emph{Operateurs Maximaux et Semi-groupes de Contractions das les Espaces de Hilbert,} North Holland, New York, 1973.
	
	  
	  
	  \bibitem{ABJL2} A. Bensoussan and J. L.  Lions, \emph{Applications of Variational Inequalities in Stochastic Control}, Studies in Mathematics and its Applications, \textbf{12}, North-Holland Publishing Co., Amsterdam-New York, 1982.
	  
%
	  
	  \bibitem{ceb} C. E. Br\'ehier and A. Debussche,
	  Kolmogorov equations and weak order analysis for SPDEs with nonlinear diffusion coefficient, \emph{J. Math. Pures Appl. (9)},
	  \textbf{119} (2018), 193--254.
	  
	
      
	  \bibitem{pcgd} P. Cannarsa and G. D. Prato, Second order Hamilton-Jacobi equations in infinite dimensions, \emph{SIAM J. Control Optim.}, \textbf{29}(2) (1991), 474--492.
	
	 \bibitem{pcgdp1} P. Cannarsa and G. Da Prato, Direct solution of a second order Hamilton-Jacobi equation in Hilbert spaces, \emph{Stochastic Partial Differential Equations and Applications}, Pitman Research Notes in Mathematics, Vol. \textbf{268}, pp. 72--85, Pitman, London, 1992.
	
	
	\bibitem{SC1} S. Cerrai, \emph{Second order PDE's in finite and infinite dimension: A probabilistic approach}, Lecture Notes in Mathematics, 1762, Springer-Verlag, Berlin, 2001.
	
	\bibitem{SC2} S. Cerrai, A Hille-Yosida theorem for weakly continuous semigroups, \emph{Semigroup Forum}, \textbf{49}(3) (1994), pp. 349-367.
    
    \bibitem{EANK} E. Charpentier, A. Lesne and N. K. Nikolski, \emph{Kolmogorov's Heritage in Mathematics}, translated from the 2004 French original, Springer, Berlin, 2007. 
    
    \bibitem{MBCTD} M. B. Chiarolla and T. D. Angelis, Optimal stopping of a Hilbert space valued diffusion: An infinite-dimensional variational inequality, \emph{Appl. Math. Optim.}, \textbf{73}(2) (2016),  271--312.
    
	\bibitem{AC} A. Chorin, \emph{A Mathematical Introduction to Fluid Mechanics}, Springer-Verlag, 1992. 
	
	
	
	
%
%
	
   \bibitem{gdp} G. Da Prato and J. Zabczyk, \emph{Stochastic Equations in Infinite Dimensions},
   Second edition, Encyclopedia of mathematics and its applications, 152, Cambridge University Press, Cambridge, 2014. 
   
   
   \bibitem{gdp1} G. Da Prato, \emph{Kolmogorov Equations for Stochastic PDEs}, 
   Advanced Courses in Mathematics, CRM Barcelona, Birkhäuser Verlag, Basel, 2004.
   
   
   
   \bibitem{gdp7} G. Da Prato and J. Zabczyk, \emph{Second order partial differential equations in Hilbert spaces}, London Mathematical Society Lecture Note Series, \textbf{293}, Cambridge University Press, Cambridge, 2002.
   
   \bibitem{gdp2} G. Da Prato and J. Zabczyk, \emph{Ergodicity for Infinite-Dimensional Systems}, London Mathematical Society Lecture Note Series, Cambridge University Press, Cambridge, 1996.
   
   
   
       \bibitem{gPaD} G. Da Prato and A. Debussche, Dynamic programming for the stochastic Navier-Stokes equations,
     Special issue for R. Temam's 60th birthday, \emph{M2AN Math. Model. Numer. Anal.}, \textbf{34} (2) (2000), 459--475.
   
   

	\bibitem{EBD} E. B. Davies, \emph{One-Parameter Semigroups},
	London Mathematical Society Monographs, Academic Press, London-New York, 1980.
	
%
	
	 \bibitem{AD} A. Debussche, Ergodicity results for the stochastic Navier-Stokes equations: An introduction, \emph{Topics in Mathematical Fluid Mechanics}, Volume 2073 of the series Lecture Notes in Mathematics, Springer, 23--108, 2013.
	
	
	
	
	
	\bibitem{FGS} G. Fabri, F. Gozzi, and A. Swiech, \emph{Stochastic Optimal Control in Infinite Dimension: Dynamic Programming and HJB Equations}, Springer-Verlag, New York, 2018.

	
	\bibitem{WHF} W. H. Fleming, Optimal continuous-parameter stochastic control, \emph{SIAM Rev.}, \textbf{11} (1969), 470--509.
	
	\bibitem{AvF} A. Friedman, Optimal stopping problems in stochastic control, \emph{SIAM Rev.}, \textbf{21}(1) (1979), 71--80.
	
	\bibitem{gozzi} F. Gozzi, S. S. Sritharan and A. Swiech, Bellman equations associated to the optimal feedback control of stochastic Navier-Stokes equations, \emph{Comm. Pure Appl. Math.}, \textbf{58}(5) (2005), 671--700.
	
	\bibitem{gozzi2} F. Gozzi, Regularity of solutions of a second order Hamilton-Jacobi equation and application to a control problem, \emph{Comm. Partial Differential Equations}, \textbf{20} (5-6) (1995), 775--826.
	
	\bibitem{gozzi1} F. Gozzi, Global regular solutions of second order Hamilton-Jacobi equations in Hilbert spaces with locally Lipschitz nonlinearities, \emph{J. Math. Anal. Appl.}, \textbf{198}(2) (1996), 399--443.
	
	\bibitem{gozzi3} F. Gozzi and E. Rouy, Regular solutions of second-order stationary Hamilton-Jacobi equations, \emph{J. Differential Equations}, \textbf{130}(1) (1996), 201--234.
	
	
	
	
	
	 \bibitem{KWH} K. W. Hajduk and J. C. Robinson, Energy equality for the 3D critical convective Brinkman-Forchheimer equations, \emph{J. Differential Equations}, \textbf{263}(11) (2017), 7141--7161.
	 
	\bibitem{MHJC}   M. Hairer, J.C. Mattingly, Ergodicity of the 2D Navier-Stokes equations with degenerate stochastic forcing, \emph{Annals of Mathematics}, {\bf 164} (2006), 993--1032.
	
	\bibitem{MH1} M. Hairer, M. Hutzenthaler and A. Jentzen, Loss of regularity for Kolmogorov equations, \emph{Ann. Probab.}, \textbf{43}(2) (2015), 468--527.
	 
	 \bibitem{KT2} V. K. Kalantarov and S. Zelik, Smooth attractors for the Brinkman-Forchheimer equations with fast growing nonlinearities, \emph{Commun. Pure Appl. Anal.}, \textbf{11}(5) (2012), 2037--2054.
	
	\bibitem{kkmtm} K. Kinra and M. T. Mohan, Random attractors and invariant measures for stochastic convective Brinkman-Forchheimer equations on 2D and 3D unbounded domains, \emph{Discrete Contin. Dyn. Syst. Ser. B}, \textbf{29}(1) (2024), 377--425.
	
	\bibitem{ANK1} A. Kolmogoroff, 
	\"Uber die analytischen Methoden in der Wahrscheinlichkeitsrechnung, \emph{Math. Ann.}, \textbf{104}(1) (1931), 415--458.
    
	\bibitem{akmtm} A. Kumar and M. T. Mohan, Large deviation principle for occupation measures of two dimensional stochastic convective Brinkman-Forchheimer equations, \emph{Stoch. Anal. Appl.}, \textbf{41}(2) (2023), 214--256. 
	
	\bibitem{OAL}	O. A. Ladyzhenskaya, \emph{The Mathematical Theory of Viscous Incompressible Flow}, Gordon and Breach, New York, 1969.
	
	
	
	
%
	
	
	
	
	
		
	\bibitem{MTM8} M. T. Mohan, Stochastic convective Brinkman-Forchheimer equations, \url{https://arxiv.org/pdf/2007.09376}. 
	
	   \bibitem{MT2}  M. T. Mohan, Well-posedness and asymptotic behavior of stochastic convective Brinkman-Forchheimer equations perturbed by pure jump noise, \emph{Stoch PDE: Anal. Comp.}, {\bf 10}(2) (2022), 614--690.
	
	\bibitem{MoSS2} {M. T. Mohan, K. Sakhtivel and S. S. Sritharan,} Dynamic programming for the stochastic 2D-Navier-Stokes equations forced by L\'{e}vy noise, 2024, \emph{Math. Control Relat. Fields}. \url{10.3934/mcrf.2024016}
	
%
	 
	
    
	
	\bibitem{oks} B. Øksendal, Stochastic Differential Equations:
	An Introduction with Applications, Sixth edition, Universitext. Springer-Verlag, Berlin, 2003.
	
	\bibitem{MO} M. Ondrej\'at, Brownian representations of cylindrical local martingales, martingale problem and
	strong Markov property of weak solutions of SPDEs in Banach spaces, \emph{Czechoslovak Math. J.}, \textbf{55}(4) (2005), 1003--1039.
	
	\bibitem{KRP} K. R. Parthasarathy, \emph{Probability Measures on Metric Spaces}, vol. \textbf{3}, Academic Press, New York-London, 1967.
	
	\bibitem{gpas} G. Peskir and A. Shiryaev, Optimal stopping and free-boundary problems, Lectures in Mathematics ETH Zürich. Birkh\"user Verlag, Basel, 2006.
	
	
	 
	 \bibitem{mr13} M. R\"ockner, $L^p$-analysis of finite and infinite-dimensional diffusion operators, \emph{Stochastic PDE's and Kolmogorov equations in infinite dimensions}, Lecture Notes in Math, vol. 1715, pp. 65--116. Springer, Berlin (1999).
	 
	 \bibitem{mr14} M. R\"ockner and Z. Sobol,
	 A new approach to Kolmogorov equations in infinite dimensions and applications to the stochastic 2D Navier-Stokes equation,
	 \emph{C. R. Math. Acad. Sci. Paris}, \textbf{345}(5) (2007), 289--292.
	 
	
	\bibitem{WSt} W. Stannat, $L^p$-uniqueness of Kolmogorov operators associated with 2D-stochastic Navier-Stokes-Coriolis equations, \emph{Math. Nachr.}, \textbf{284}(17-18) (2011), 2287--2296.
	
	
	\bibitem{TT2} T. Tobias, Optimal stopping of diffusion processes and parabolic variational inequalities, \emph{Differencial'nye Uravnenija}, \textbf{9} (1973), 702--708.
	
	
	
	\bibitem{jz13} J. Zabczyk, Parabolic equations on Hilbert spaces, \emph{Stochastic PDE's and Kolmogorov equations in infinite dimensions}, Lecture Notes in Math, vol. 1715, pp. 65--116. Springer, Berlin (1999).
	
\end{thebibliography}
\end{document}